\documentclass[a4paper,twoside]{article}  

\usepackage{amssymb,amsmath,amsthm,dsfont,amsfonts,color,latexsym}

\usepackage{hyperref}

\usepackage{mathrsfs} 

\definecolor{blue}{rgb}{0.00,0.00,1.00}
\definecolor{red}{rgb}{1.00,0.00,0.00}

\allowdisplaybreaks[4]
\renewcommand{\baselinestretch}{1.2}
\def\bq{\begin{equation}}
\def\eq{\end{equation}}
\def\ba{\begin{array}{ccc}}
\def\bal{\begin{array}{lll}}
\def\ea{\end{array}}
\def\bsp{\begin{split}}
\def\esp{\end{split}}

 \def\lt#1{\left#1}\def\rt#1{\right#1}
\def\({\left(}\def\){\right)}
\def\[{\left[}\def\]{\right]}

    \def \R   {\mathbb{R}}

    \def\i    {\mathrm{i}}

    \def\S    {\mathbb{S}}
    \def\eps  {\epsilon}
    \def\intr {\int_{\R^3}}

    \def \N    {\mathbb{N}}

    \def \pt   {\partial}
    
    \def \Dt   {\frac{\rm d}{{\rm d}t}}

    \def \div  {{\rm div}}

    \def\Tdx   {\nabla_x}

       \def\bq{\begin{equation}}
       \def\eq{\end{equation}}
       \def\be{\begin{equation}}
       \def\ee{\end{equation}}
       \def\bma#1\ema{{\allowdisplaybreaks\begin{align}#1\end{align}}}
       \def\bmas#1\emas{{\allowdisplaybreaks\begin{align*}#1\end{align*}}}
       \def\bln#1\eln{{\allowdisplaybreaks\begin{aligned}#1\end{aligned}}}
       \def\nnm{\notag}
       \def\bgr#1\egr{\allowdisplaybreaks\begin{gather}#1\end{gather}}
       \def\bgrs#1\egrs{\allowdisplaybreaks\begin{gather*}#1\end{gather*}}

       \theoremstyle{plain}
       \newtheorem{lem}{\bf Lemma}[section]
       \newtheorem{thm}[lem]{\textbf{Theorem}}

       \newtheorem{remark}[lem]{\bf Remark}

\begin{document}


\title{Diffusion Limit with Optimal Convergence Rate of Classical Solutions to the modified Vlasov-Poisson-Boltzmann System}

\author{ Yanchao Li$^{1,2}$,\, Mingying Zhong$^{1}$ \\[2mm]
 \emph
    {\small\it $^1$School of  Mathematics and Information Sciences,
    Guangxi University,  China.}\\
    {\small\it  \& Center for Applied Mathematical of Guangxi (Guangxi University), Guangxi University,  China.}\\
    {\small\it E-mail:\ zhongmingying@gxu.edu.cn}\\
     {\small\it $^2$Post-doctoral Research Station of the First-level Discipline in Mathematics, Guangxi University,  China.}\\
    {\small\it E-mail:\ yanchaoli@gxu.edu.cn}}
\date{ }

\pagestyle{myheadings}
\markboth{modified Vlasov-Poisson-Boltzmann System}%
{Y.-C. Li, M.-Y. Zhong}

 \maketitle

 \thispagestyle{empty}

\begin{abstract}\noindent{
In the present paper, we study the diffusion limit of the classical solution to the modified Vlasov-Poisson-Boltzmann (mVPB) System with initial data near a global Maxwellian. Based on the spectral analysis, we prove the convergence and establish the convergence rate of the global strong solution to the mVPB system towards the solution to an incompressible Navier-Stokes-Poisson-Fourier system with the precise estimation on the initial layer.
}

\medskip
 {\bf Key words:
 }modified Vlasov-Poisson-Boltzmann system, spectral analysis, diffusion limit, convergence rate, initial layer.

\medskip
 {\bf 2010 Mathematics Subject Classification}. 76P05, 82C40, 82D05.
\end{abstract}

\tableofcontents

\section{Introduction}
\label{sect1}
\setcounter{equation}{0}
We consider the rescaled modified Vlasov-Poisson-Boltzmann (mVPB) system  \cite{Li-4,Markowich-1}:
\bma
&\partial_t F_{\epsilon}+\frac{1}{\epsilon}v\cdot\nabla_xF_{\epsilon}+\frac{1}{\epsilon}\nabla_x\Phi_{\epsilon}\cdot\nabla_v F_{\epsilon}=\frac{1}{\epsilon^2}Q(F_{\epsilon},F_{\epsilon}),\label{m1}\\
&\Delta_x\Phi_{\epsilon}=\int_{\mathbb{R}^3}F_{\epsilon}dv-e^{-\Phi_{\epsilon}},\label{m2}
\ema
where $\epsilon>0$ is a small parameter related to the mean free path, $F_{\epsilon}=F_{\epsilon}(t,x,v)$ is the distribution function with $(t,x,v)\in\mathbb{R}^{+}\times\mathbb{R}^3\times\mathbb{R}^3$, and $\Phi_{\epsilon}(t,x)$ denotes the electric potential. Since we study the diffusive limit when $\epsilon $ tends to zero, we assume $\eps\in(0,1)$ in the following analysis. The collision between particles is given by the standard Boltzmann collision operator $Q(f,g)$ as below
\bq
Q(f,g)=\frac{1}{2}\int_{\mathbb{R}^3}\int_{\mathbb{S}^2}B(|v-v_{\ast}|,\omega)\big(f'_{\ast}g'+f'g'_{\ast}-f_{\ast}g-fg_{\ast}\big)dv_{\ast}d\omega,
\eq
where
\bmas
&f'_{\ast}=f(t,x,v'_{\ast}),\quad f'=f(t,x,v'),\quad f_{\ast}=f(t,x,v_{\ast}),\quad f=f(t,x,v),\\
&v'=v-[(v-v_{\ast})\cdot\omega]\omega,\quad v_{\ast}'=v_{\ast}-[(v-v_{\ast})\cdot\omega]\omega, \quad\omega\in\mathbb{S}^2.
\emas
The collision kernel $B(|v-v_{\ast}|,\omega)$ is a non-negative function of $|v-v_{\ast}|$ and $|(v-v_{\ast})\cdot\omega|$:
\bq
B(|v-v_{\ast}|,\omega)=B(|v-v_{\ast}|,\cos\theta),\quad \cos\theta=\frac{|(v-v_{\ast})\cdot\omega|}{|v-v_{\ast}|}, \quad \theta\in\[0,\frac{\pi}{2}\].
\eq
In the following, we consider both the hard sphere model and hard potential model with angular cutoff. Precisely, for the hard sphere model,
\bq
B(|v-v_{\ast}|,\omega)=|(v-v_{\ast})\cdot\omega|=|v-v_{\ast}|\cos\theta,\quad  \theta\in\[0,\frac{\pi}{2}\],
\eq
and for the models of the hard potentials with Grad angular cutoff assumption,
\bq
B(|v-v_{\ast}|,\omega)=b(\cos\theta)|v-v_{\ast}|^{\gamma},\quad0\leq\gamma<1,\quad \theta\in\[0,\frac{\pi}{2}\],
\eq
where we assume for simplicity
$$
0\leq b(\cos\theta)\leq C|\cos\theta|.
$$

There has been significant progress made on the well-posedness and long time behavior of solutions to the Vlasov-Poisson-Boltzmann (VPB) system for fixed $\epsilon$. In particular, the global existence of renormalized weak solution for general large initial data was shown in \cite{Mischler-1}. The global existence of unique strong solution with the initial data near the normalized global Maxwellian was obtained in a spatially periodic domain in \cite{Guo-2} and in the whole space in \cite{Duan-2,Yang-1,Yang-3} for hard sphere model, and further in \cite{Duan-3,Duan-4} for hard potential or soft potential. As for the long-time behavior of the VPB system, we refer to the works \cite{Duan-1,Li-2,Li-4,Wang-2,Yang-2}. The existence of a classical solution near a vacuum was investigated in \cite{Guo-3}. In addition, the spectrum structure and the optimal decay rate of the classical solution were investigated in \cite{Li-2,Li-4}.

 The fluid dynamical limit of the solution to the VPB system near Maxwellian was also studied in \cite{Guo-1,Gong-1,Jiang-1,Li-1,Tong-1,Wang-1,Wu-1}. In \cite{Guo-1}, the authors proved the convergence of the solutions to the unipolar VPB system towards a solution to the compressible Euler-Poisson system in the whole space. In \cite{Wang-1}, the author established a global convergence result of the solution to the bipolar VPB system towards a solution to the incompressible Vlasov-Navier-Stokes-Fourier system. Moreover, the authors  justified the incompressible Navier-Stokes-Poisson-Fourier limit of  the VPB system in \cite{Gong-1,Jiang-1,Li-1,Tong-1}, and the incompressible Euler-Poisson limit in  \cite{Wu-1}.

On the other hand, the diffusion limit to the Boltzmann equation is a classical problem with pioneer work by Bardos-Golse-Levermore in \cite{Bardos-1}, and significant progress on the limit of renormalized solutions to Leray solution to Navier-Stokes system in \cite{Golse-1}.
One effective approach to study the fluid dynamic in the perturbative framework is based on the spectral analysis. For example,  Ellis-Pinsky \cite{Ellis-1} first studied the linear compressible Euler limit of the linear Boltzmann equation and showed the convergence rate outside the initial layer. The initial layer in the fluid limit arises from the incompatibility of the initial data, in particular due to the high oscillation of the eigen-modes in the system. Bardos-Ukai \cite{Bardos-3} firstly studied  the incompressible Navier-Stokes limit with the estimation on the initial layer for the linear  Boltzmann equation. The analysis in \cite{Bardos-3} uses an estimate on the semigroup with highly oscillating eigen-modes in \cite{Ukai-1} which is about the incompressible limit of  the compressible Euler equation. For the VPB system, Li-Yang-Zhong \cite{Li-1} first established the convergence rate of global strong solution towards the solution to an incompressible Navier-Stokes-Poisson-Fourier system, and provided a precise structure of the initial layer.
In contrast to the works on Boltzmann equation \cite{Bardos-1,Bardos-2,Bardos-3,Guo-4,Masi-1,Nishida-1} and the VPB system \cite{Guo-1,Gong-1,Jiang-1,Li-1,Wang-1}, the convergence and the optimal convergence rate of  the classical solution to the mVPB system  towards its fluid dynamical limit  and the estimation of the initial layer  have not been given.

In this paper, we study the diffusion limit of the strong solution to the rescaled mVPB system \eqref{m1}-\eqref{m2} with initial data near the normalized Maxwellian $M$, where
$$
M=M(v)=\frac{1}{(2\pi)^{\frac{3}{2}}}e^{-\frac{|v|^2}{2}},\quad v\in\mathbb{R}^3.
$$
Hence, we denote the perturbation of $F_{\epsilon}$ and $\Phi_{\epsilon}$ as
$$
F_{\epsilon}=M+\epsilon\sqrt{M}f_{\epsilon},\quad \Phi_{\epsilon}=\epsilon\phi_{\epsilon}.
$$
Thus, the rescaled mVPB system \eqref{m1}-\eqref{m2} for $f_{\epsilon}$ and $\phi_{\epsilon}$ is
\bma
&\partial_tf_{\epsilon}+\frac{1}{\epsilon}v\cdot\nabla_xf_{\epsilon}+\frac{1}{\epsilon}v\sqrt{M}\cdot\nabla_x(I-\Delta_x)^{-1}\int_{\R^3}f_{\epsilon}\sqrt{M}dv-\frac{1}{\epsilon^2}Lf_{\epsilon}\nnm\\
&\quad=G_1(f_{\epsilon})+\frac{1}{\epsilon}G_2(f_{\epsilon})+\frac{1}{\epsilon^2}v\sqrt{M}\cdot\nabla_x(I-\Delta_x)^{-1}G_3(f_{\epsilon}),\label{Pm1}\\
&(I-\Delta_x)\phi_{\epsilon}=-\int_{\R^3}f_{\epsilon}\sqrt{M}dv+\frac{1}{\epsilon}G_3(f_{\epsilon}),\label{Pm2}
\ema
with the initial condition
\bq
f_{\epsilon}(x,v,0)=f_{0}(x,v),\label{Pm3}
\eq
by assuming that the initial data $f_0$ is independent of $\epsilon$. As usual, the linear operator $L$ and the nonlinear operators $G_j$ $(j=1,2,3)$ are given by
\bq\label{3dmvpbnlt}
\left\{\bln
&Lf_{\epsilon}=\frac{1}{\sqrt{M}}[Q(M,\sqrt{M}f_{\epsilon})+Q(\sqrt{M}f_{\epsilon},M)],\\
&G_1(f_{\epsilon})=\frac{1}{2}(v\cdot\nabla_x\phi_{\epsilon})f_{\epsilon}-\nabla_x\phi_{\epsilon}\cdot\nabla_vf_{\epsilon},\\
&G_2(f_{\epsilon})=\Gamma(f_{\epsilon},f_{\epsilon})=\frac{1}{\sqrt{M}}Q(\sqrt{M}f_{\epsilon},\sqrt{M}f_{\epsilon}),\\
&G_3(f_{\epsilon})=e^{-\epsilon\phi_{\epsilon}}+\epsilon\phi_{\epsilon}-1.
\eln\right.
\eq

The linearized collision operator $L$ can be written as
\bq
\left\{\bln
(Lf)(v)&=(Kf)(v)-\nu(v)f(v),\\
\nu(v)&=\int_{\R^3}\int_{\S^2}B(|v-v_{\ast}|,\omega)M_{\ast}d\omega dv_{\ast},\\
(Kf)(v)&=\int_{\R^3}\int_{\S^2}B(|v-v_{\ast}|,\omega)(\sqrt{M_{\ast}'}f'+\sqrt{M'}f_{\ast}'-\sqrt{M}f_{\ast})\sqrt{M_{\ast}}d\omega dv_{\ast}\\
 &=\int_{\R^3}k(v,v_{\ast})f(v_{\ast})dv_{\ast},
\eln\right.
\eq
where $\nu(v)$, the collision frequency, is a real function, and $K$ is a self-adjoint compact operator on $L^2(\mathbb{R}^3_v)$ with a real symmetric integral kernel $k(v,v_{\ast})$. In addition, $\nu(v)$ satisfies
\bq\label{nu1}
\nu_0(1+|v|)^{\gamma}\leq\nu(v)\leq\nu_1(1+|v|)^{\gamma},
\eq
with $\gamma=1$ for the hard sphere and $0\leq\gamma<1$ for the hard potential.

The nullspace of the operator $L$, denote by $N_0$, is a subspace spanned by the orthonormal basis $\{\chi_j,j=0,1,2,3,4\}$ with
\bq
\chi_0=\sqrt{M},\quad \chi_j=v_j\sqrt{M}\quad(j=1,2,3),\quad \chi_4=\frac{(|v|^2-3)\sqrt{M}}{\sqrt{6}}.
\eq

Let $L^2(\R^3_v)$ be a Hilbert space of complex-value function $f(v)$ on $\mathbb{R}^3$ with the inner product and the norm
$$
(f,g)=\int_{\mathbb{R}^3}f(v)\overline{g(v)}dv,\quad \|f\|=\(\int_{\mathbb{R}^3}|f(v)|^2dv\)^{\frac{1}{2}}.
$$
Denote the standard macro-micro decomposition as follows
\bq
f=P_0f+P_1f,\quad  P_0f=\sum^4_{i=0}(f,\chi_i)\chi_i.
\eq
From the Boltzmann's H-theorem, $L$ is non-positive and moreover, $L$ is locally coercive in the sense that there is a constant $\mu>0$ such that
\bq\label{Lf}
(Lf,f)\leq-\mu\|P_1f\|^2,\quad f\in D(L),
\eq
where $D(L)$ is the domains of $L$ given by
$$
D(L)=\big\{f\in L^2(\mathbb{R}^3)\big|\nu(v)f\in L^2(\mathbb{R}^3)\big\}.
$$
Without the loss of generality, we assume in this paper that $\nu(0)\geq\nu_0\geq\mu>0$.

This paper aims to prove the convergence and establish the convergence rate of strong solution $(f_{\epsilon},\phi_{\epsilon})$ towards $(u,\phi)$, where $u=n\chi_0+m\cdot v\chi_0+q\chi_4$ and $(n,m,q,\phi)(t,x)$ is the solution of the following incompressible Navier-Stokes-Poisson-Fourier (NSPF) system:
\bma
&\nabla_x\cdot m=0,\quad n+\sqrt{\frac{2}{3}}q-\phi=0,\label{NS1}\\
&\partial_tm-\kappa_0\Delta_{x}m+\nabla_xp=n\nabla_x\phi-\nabla_x\cdot(m\otimes m),\label{NS3}\\
&\partial_t\bigg(q-\sqrt{\frac{2}{3}}n\bigg)-\kappa_1\Delta_xq=\sqrt{\frac{2}{3}}m\cdot\nabla_x\phi-\frac{5}{3}\nabla_x\cdot(qm),\label{NS4}\\
&(I-\Delta_x)\phi=-n,\label{NS5}
\ema
where $p$ is the pressure, and the initial data $(n,m,q)(0)$ are given by
\bq\label{INS1}
\left\{\bal
m(0)=(f_0,v\chi_0)-\Delta_x^{-1}\nabla_x\div_x(f_0,v\chi_0),\\
q(0)-\sqrt{\frac{2}{3}}n(0)=(f_0,\chi_4-\sqrt{\frac{2}{3}}\chi_0),\\
n(0)+(I-\Delta_x)^{-1}n(0)+\sqrt{\frac{2}{3}}q(0)=0.
\ea\right.
\eq
Here, the viscosity coefficients $\kappa_0$, $\kappa_1>0$ are defined by
\bq
\kappa_0=-(L^{-1}P_1(v_1\chi_2),v_1\chi_2),\quad\kappa_1=-(L^{-1}P_1(v_1\chi_4),v_1\chi_4).\label{kapa}
\eq
In general, the convergence is not uniform near $t=0$ because of the appearance of an initial layer. However, we can show that if the initial data $f_0$ satisfies
\bq\label{F01}
\left\{\bln
&f_0(x,v)=n_0(x)\chi_0+m_0(x)\cdot v\chi_0+q_0(x)\chi_4,\\
&\nabla_x\cdot m_0=0,\quad n_0+(I-\Delta_x)^{-1}n_0+\sqrt{\frac{2}{3}}q_0=0,
\eln\right.
\eq
then the uniform convergence is up to $t=0$.

\noindent\textbf{Notations:}\ \ Before state the main results in this paper, we list some notations. Throughout this paper, $C$ denote a generic positive constant.
For any $\alpha=(\alpha_1,\alpha_2,\alpha_3)\in\mathbb{N}^3$ and $\beta=(\beta_1,\beta_2,\beta_3)\in\mathbb{N}^3$, we denote
$$
\partial^{\alpha}_{x}=\partial^{\alpha_1}_{x_1}\partial^{\alpha_2}_{x_2}\partial^{\alpha_3}_{x_3},\quad \partial^{\beta}_{v}=\partial^{\beta_1}_{v_1}\partial^{\beta_2}_{v_2}\partial^{\beta_3}_{v_3}.
$$
The Fourier transform of $f=f(x,v)$ is defined by
$$
\hat{f}(\xi,v)=\mathcal{F}f(x,v)=\frac{1}{(2\pi)^{\frac{3}{2}}}\int_{\mathbb{R}^3}e^{-\mathrm{i}x\cdot\xi}f(x,v)dx,
$$
where and throughout this paper we denote $\mathrm{i}=\sqrt{-1}$.

For any $\xi\in \R^3$, define a weight Hilbert space $L^2_{\xi}(\mathbb{R}^3)$ by
$$
L^2_{\xi}(\mathbb{R}^3)=\Big\{f\in L^2(\mathbb{R}^3)\big|\|f\|_{\xi}=\sqrt{(f,f)_{\xi}}<\infty\Big\}
$$
with the inner product
\be
(f,g)_{\xi}=(f,g)+\frac{1}{1+|\xi|^2}(P_df,P_dg).
\ee

Set the weight function $w_k$ by
\bq\label{hsw}
w_k=w_k(v)=\(1+|v|^2\)^{\frac{k}{2}}
\eq
for the hard sphere model, or
\bq\label{hpw}
w_k=w_k(t,v)=\(1+|v|^2\)^{\frac{k}{2}}e^{\frac{a|v|}{(1+t)^b}},\quad a,b>0
\eq
for the hard potential model.

Set the Sobolev spaces $H^N=\{f\in L^2\(\mathbb{R}^3_x\times\R^3_v\)|\|f\|_{H^N}<\infty\}$ and $H^N_{w_k}=\{f\in L^2(\mathbb{R}^3_x\times\mathbb{R}^3_v)|\|f\|_{H^N_{w_k}}<\infty\}$ equipped with the norms
$$
\|f\|_{H^N}=\sum_{|\alpha|\le N}\|\partial^{\alpha}_{x}f\|_{L^2(\mathbb{R}^3_x\times\R^3_v)},\quad\|f\|_{H^N_{w_k}}=\sum_{|\alpha|+|\beta|\le N}\|w_k\partial^{\alpha}_{x}\partial^{\beta}_{v}f\|_{L^2(\mathbb{R}^3_x\times\mathbb{R}^3_v)}.
$$
Denote the Banach space $L^{\infty}=L^{\infty}_x(L^2_v)$ equipped with the norm
$$
L^{\infty}=L^{\infty} (\R^3_x,L^2(\R^3_v) ),\quad \|f\|_{L^{\infty}}=\sup_{x\in\R^3}\(\int_{\R^3}|f(x,v)|^2dv\)^{\frac{1}{2}}.
$$
For $q\geq1$ and $k\ge 1$, define
\bmas
L^{2,q}=L^2(\mathbb{R}^3_v,L^q(\mathbb{R}^3_x)),&\quad \|f\|_{L^{2,q}}=\bigg(\int_{\mathbb{R}^3}\(\int_{\mathbb{R}^3}|f(x,v)|^qdx\)^{\frac{2}{q}}dv\bigg)^{\frac{1}{2}},\\
W^{k,q}=L^2(\mathbb{R}^3_v,W^{k,q}(\mathbb{R}^3_x)),&\quad \|f\|_{W^{k,q}}=\bigg(\sum_{|\alpha|\leq k}\int_{\R^3}\(\int_{\R^3}\big|\partial_x^{\alpha}f(x,v)\big|^qdx\)^{\frac{2}{q}}dv\bigg)^{\frac{1}{2}}.
\emas
For simplicity, we denote $W^{k,q}_x=W^{k,q}(\R^3_x)$ and $L^2=L^{2,2}$.

Now we are ready to state main results in this paper.

\begin{thm}\label{thm-1}
(1) Let $N\geq4$. For any $\epsilon\in(0,1)$, there exists a small constant $\delta_0>0$ such that if $\|f_0\|_{H^N_{w_1}}+\|f_0\|_{L^{2,1}}\leq\delta_0$, where $w_1=w_1(v)$ is given by \eqref{hsw} for hard sphere model, and $w_1=w_1(t,v)$ is given by \eqref{hpw} with $a>0$, $0<b\leq\frac{1}{4}$ for hard potential model, then the mVPB system \eqref{Pm1}-\eqref{Pm3} admits a unique global solution $(f_{\epsilon},\phi_{\epsilon})=(f_{\epsilon}(t,x,v),\phi_{\epsilon}(t,x))$ satisfying the following time-decay estimate:
\bq\label{thm-1-1}
\|f_{\epsilon}\|_{H^N_{w_1}}+\|\nabla_x\phi_{\epsilon}\|_{H^N_x}\leq C\delta_0(1+t)^{-\frac{3}{4}},
\eq
where $C>0$ is a constant independent of $\epsilon$.

(2) There exists a small constant $\delta_0>0$ such that if $\|f_0\|_{H^N}+\|f_0\|_{L^{2,1}}\leq\delta_0$, then the NSPF system \eqref{NS1}-\eqref{INS1} admits a unique global solution $(n,m,q,\phi)(t,x)\in L^{\infty}_t\(H^N_x\)$. Moreover, $u(t,x,v)=n(t,x)\chi_0+m(t,x)\cdot v\chi_0+q(t,x)\chi_4$ has the following time-decay rate:
$$
\|u(t)\|_{H^N}+\|\nabla_x\phi(t)\|_{H^N_x}\leq C\delta_0(1+t)^{-\frac{3}{4}},
$$
where $C>0$ is a constant.
\end{thm}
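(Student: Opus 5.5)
I will follow the standard scheme used for the VPB system in \cite{Li-1,Li-4}: spectral analysis of the linearized rescaled mVPB operator, an $\epsilon$-uniform semigroup decay estimate, and a nonlinear energy method closed by a bootstrap. Write the linearized problem as $\partial_t f=\frac{1}{\epsilon^2}B_\epsilon f$ with
\[
B_\epsilon(\xi)=L-\mathrm{i}\epsilon v\cdot\xi-\mathrm{i}\epsilon\frac{v\cdot\xi}{1+|\xi|^2}\sqrt{M}(\cdot,\sqrt{M}).
\]
Note that $\frac{1}{1+|\xi|^2}$ is bounded, unlike the VPB factor $1/|\xi|^2$. This is why the weighted space $L^2_\xi$ with $(f,g)_\xi$ makes $B_\epsilon(\xi)$ dissipative.

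\textbf{Step 1: spectral analysis.} I will perform the spectral analysis of $B_1(\xi)$, as in \cite{Li-4}. For $|\xi|\le r_0$ there are five eigenvalues $\lambda_j(|\xi|)=\mathrm{i}\eta_j|\xi|-a_j|\xi|^2+O(|\xi|^3)$, with eigenprojections analytic in $|\xi|$. For $|\xi|>r_0$ there is a spectral gap $\mathrm{Re}\,\lambda\le-\sigma_0$. Scaling gives $e^{tB_\epsilon/\epsilon^2}=e^{tB_1(\epsilon\xi)/\epsilon^2}$. The low-frequency part then decays like $e^{-ct|\xi|^2}$ uniformly in $\epsilon$, and the high-frequency part like $e^{-\sigma_0 t/\epsilon^2}$. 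Combining these with the $L^{2,1}$ norm of the data yields
\[
\|\partial_x^\alpha e^{tB_\epsilon/\epsilon^2}f_0\|_{L^2}\le C(1+t)^{-3/4}\big(\|\partial_x^\alpha f_0\|_{L^2}+\|f_0\|_{L^{2,1}}\big),
\]
with improved rates for $P_1$-parts and for sources $\frac{1}{\epsilon}\Gamma$ that lie in $N_0^\perp$. The $\frac{1}{\epsilon}$ and $\frac{1}{\epsilon^2}$ singular sources in \eqref{Pm1} are absorbed into this step. $G_2$ is microscopic, and $L^{-1}$ gains an $\epsilon$ against the transport. The term $G_3=O(\epsilon^2\phi^2)$ supplies $\epsilon^2$, which cancels the $\frac{1}{\epsilon^2}$.

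\textbf{Step 2: a priori estimates.} I will derive $\epsilon$-uniform energy estimates in $H^N_{w_1}$. First take $x$-derivatives in $L^2_{\xi}$-type energies; the $\frac{1}{\epsilon}$ transport terms cancel there, and $-\frac{1}{\epsilon^2}(Lf,f)$ gives the dissipation $\frac{1}{\epsilon^2}\|P_1 f\|^2$. The macroscopic dissipation comes from the local conservation laws via the usual interactive functional. Next treat mixed $x,v$-derivatives of $P_1f$ with the weight $w_1$, using $\nu(v)\sim(1+|v|)^\gamma$. In the hard potential case, the time-dependent exponential weight $e^{a|v|/(1+t)^b}$ produces the extra dissipation $\frac{ab|v|}{(1+t)^{1+b}}$, which controls the $(v\cdot\nabla_x\phi)f$ growth in $G_1$ that $\nu$ cannot absorb when $\gamma<1$. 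I expect the requirement $b\le 1/4$ to come from balancing this against the decay $(1+t)^{-3/4}$ of $\nabla_x\phi$. The electric field is estimated from \eqref{Pm2}, using $(I-\Delta_x)^{-1}$ and $G_3=O(\epsilon^2\phi_\epsilon^2)$.

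\textbf{Step 3: closing the argument.} I will define
\[
Q(t)=\sup_s(1+s)^{3/4}\big(\|f_\epsilon\|_{H^N_{w_1}}+\|\nabla\phi_\epsilon\|_{H^N}\big).
\]
Duhamel with Step 1 bounds the low-order norm by $C\delta_0+CQ(t)^2$. The energy inequality of Step 2, combined with that low-order bound through a Gronwall argument, then bounds the high norms. Hence $Q\le C\delta_0+CQ^2$, which gives global existence via local existence and continuation, and uniqueness follows by standard arguments.

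\textbf{Part (2).} Part (2) is analogous and easier. I will linearize the NSPF system: using \eqref{NS1} and \eqref{NS5}, $n$ and $\phi$ are expressed through $\theta=q-\sqrt{2/3}\,n$ via a zeroth-order Fourier multiplier. This yields heat-type semigroups for $m$ (with the Leray projection) and for $\theta$, which decay like $(1+t)^{-3/4}$ from $L^1\cap H^N$ data. Energy estimates for the quadratic terms then close by a bootstrap.

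The main obstacle is the $\epsilon$-uniform treatment of the singular nonlinear terms $\frac{1}{\epsilon}G_2$ and $\frac{1}{\epsilon^2}v\sqrt{M}\cdot\nabla(I-\Delta)^{-1}G_3$, together with the $v$-growth in $G_1$ for hard potentials. I would handle the first by expanding $G_3$ to get $\epsilon^2|\phi|^2$ and by using microscopic projections with the $\frac{1}{\epsilon^2}$ dissipation. I would handle the second with the time-weighted exponential weight.
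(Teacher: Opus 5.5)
\textbf{The gap is in Step 1.} It rests on the identity $e^{tB_\eps/\eps^2}=e^{tB_1(\eps\xi)/\eps^2}$, i.e.\ $B_\eps(\xi)=B_1(\eps\xi)$, and this identity is false for the modified system. By your own formula, $B_1(\eps\xi)$ contains the field term $-\mathrm{i}\frac{\eps v\cdot\xi}{1+\eps^2|\xi|^2}P_d$, whereas $B_\eps(\xi)$ contains $-\mathrm{i}\eps\frac{v\cdot\xi}{1+|\xi|^2}P_d$. The screened Poisson multiplier $(1+|\xi|^2)^{-1}$ is not homogeneous. Likewise, the inner product $(\cdot,\cdot)_\xi$ in which $B_\eps(\xi)$ is dissipative depends on $|\xi|$, not on $\eps|\xi|$. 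The paper points this out explicitly: it is the main new difficulty handled in its Section 2.

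The difference $B_\eps(\xi)-B_1(\eps\xi)$ is of the same order $\eps|\xi|$ as the transport term itself, so it changes the leading-order spectrum. On the unbounded region $\{\eps|\xi|\le r_0\}$ the eigenvalues depend on $|\xi|$ and $\eps$ separately:
$\lambda_j=-\mathrm{i}\eps|\xi|u_j(|\xi|)-\eps^2d_j(|\xi|)+O(\eps^3|\xi|^3)$,
with sound speeds $u_{\pm1}(|\xi|)=\mp\sqrt{5/3+1/(1+|\xi|^2)}$ and rates $d_j(|\xi|)$ as in \eqref{sp4}. This is not the form $\lambda_j(\eps|\xi|)$ your scaling predicts. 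Consequently, none of the $\eps$-uniform facts your Duhamel step in Step 3 uses is established:
\begin{itemize}
\item $\mathrm{Re}\,\lambda_j\le-c\eps^2|\xi|^2$ on $\eps|\xi|\le r_0$;
\item uniform bounds on the eigenprojections;
\item the remainder bound $\|S_2(t,\xi,\eps)\|\le Ce^{-\sigma_0 t/\eps^2}$.
\end{itemize}
Without them, the $\eps$-independence of $C$ in \eqref{thm-1-1} is also unproven.

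\textbf{How to repair it.} The decay estimate you state at the end of Step 1 is true (it is Lemma \ref{3dmvpbfas2}), but it must come from a genuinely two-parameter analysis:
\begin{itemize}
\item Rotate to $\xi=\eta e_1$ and eliminate the microscopic part with $(L+\mathrm{i}\eps\eta\sigma-\mathrm{i}\eps\eta P_1v_1P_1)^{-1}$; this only needs $\eps|\eta|$ small.
\item Reduce to a perturbation of $A(\eta)=P_0v_1P_0+\frac{v_1}{1+\eta^2}P_d$ on $N_0$. Its distinct eigenvalues $u_{\pm1}(\eta)$ and $0$ stay uniformly separated for all $\eta\in\mathbb{R}$, and the transverse modes $j=2,3$ decouple.
\item Solve the dispersion relations by a fixed-point argument uniform in $\eta$ on $\{\eps|\eta|\le r_0\}$, as in Lemmas \ref{3dmvpbsp9} and \ref{3dmvpbsp7}.
\item Add an $\eps$-uniform spectral gap for $\eps|\xi|\ge r_0$ to obtain the decomposition of Lemma \ref{3dmvpbsp12}.
\end{itemize}
An $\eps$-uniform Fourier energy method would be a possible alternative. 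With that replacement, your Steps 2--3 and Part (2) match the paper's Lemmas \ref{rbedl-ee2}--\ref{rbedl6}: a Lyapunov inequality for $\mathcal{E}_{N,1}$, Duhamel only for $\|P_0f_\eps\|_{L^2}$, the bound $D_{N,1}+\|P_0f_\eps\|^2_{L^2}\ge c_1\mathcal{E}_{N,1}$, and then $Q\le C\delta_0+CQ^2$.

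\textbf{A smaller point.} Your heuristic for $b\le\frac14$ cannot work as stated. Absorbing $\|\nabla_x\phi_\eps\|_{L^\infty_x}\||v|^{1/2}w_1P_1f_\eps\|^2$ into $\frac{ab}{(1+t)^{1+b}}\||v|^{1/2}w_1f_\eps\|^2$ fails for large $t$, for every $b>0$, if only the rate $(1+t)^{-3/4}$ is available. You would need $\|\nabla_x\phi_\eps\|_{L^\infty_x}$ to decay at least like $(1+t)^{-1-b}$; a rate such as $(1+t)^{-5/4}$ would be consistent with $b\le\frac14$. The paper does not carry out these weighted estimates itself and defers them to the cited works.
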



\begin{thm}\label{thm-2}
Let $(f_{\epsilon},\phi_{\epsilon})=(f_{\epsilon}(t,x,v),\phi_{\epsilon}(t,x))$ be the global solution to the mVPB system \eqref{Pm1}-\eqref{Pm3}, and let $(n,m,q,\phi)=(n,m,q,\phi)(t,x)$ be the global solution to the NSPF system \eqref{NS1}-\eqref{INS1}. Then, there exists a small constant $\delta_0>0$ such that if $\|f_0\|_{H^5_{w_1}}+\|f_0\|_{L^{2,1}}\leq\delta_0$, where $w_1=w_1(v)$ is given by \eqref{hsw} for hard sphere model, and $w_1=w_1(t,v)$ is given by \eqref{hpw} with $a>0$, $0<b\leq\frac{1}{4}$ for hard potential model, then we have
\bq
\|f_{\epsilon}(t)-u(t)\|_{L^{\infty}}+\|\nabla_x\phi_{\epsilon}-\nabla_x\phi\|_{L^{\infty}_x}\leq C\delta_0\bigg(\epsilon|\ln\epsilon|^2(1+t)^{-\frac{3}{4}}+\(1+\frac{t}{\epsilon}\)^{-1}\bigg),\label{thm-2-1}
\eq
where $u(t,x,v)=n(t,x)\chi_0+m(t,x)\cdot v\chi_0+q(t,x)\chi_4$.

Moreover, if the initial data $f_0$ satisfies \eqref{F01} and $\|f_0\|_{H^5}+\|f_0\|_{L^{2,1}}\leq\delta_0$, then we have
\bq
\|f_{\epsilon}(t)-u(t)\|_{L^{\infty}}+\|\nabla_x\phi_{\epsilon}-\nabla_x\phi\|_{L^{\infty}_x}\leq C\delta_0\epsilon|\ln\epsilon|(1+t)^{-\frac{3}{4}}.\label{thm-2-2}
\eq
\end{thm}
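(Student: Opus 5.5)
The plan is to write both $f_\eps$ and $u$ through Duhamel formulas built on the linearized semigroups, and to compare the two term by term. Set
$$B_\eps(\xi)=\frac{1}{\eps^2}L-\frac{\i}{\eps}v\cdot\xi-\frac{\i}{\eps}\frac{v\cdot\xi}{1+|\xi|^2}P_d .$$
Then $\hat f_\eps(t)=e^{tB_\eps(\xi)}\hat f_0+\int_0^t e^{(t-s)B_\eps(\xi)}\widehat{G}(s)\,ds$, with $G=G_1+\eps^{-1}G_2+\eps^{-2}v\chi_0\cdot\nabla_x(I-\Delta_x)^{-1}G_3$. By the scaling $e^{tB_\eps(\xi)}=e^{(t/\eps^2)B_1(\eps\xi)}$, it suffices to study the spectrum of $B_1(\xi)$ for small $|\xi|$ (the mVPB analogue of the spectral analysis in \cite{Li-4}).

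\textbf{Step 1: spectral expansion.} For $|\xi|\le r_0$ there are five eigenvalues:
\begin{itemize}
\item two acoustic ones, $\lambda_{\pm1}(|\xi|)=\pm\i c(|\xi|)|\xi|-a_{\pm}|\xi|^2+O(|\xi|^3)$, where $c(|\xi|)$ involves $\sqrt{1+(1+|\xi|^2)^{-1}}$;
\item two shear ones, $\lambda_{2,3}=-\kappa_0|\xi|^2+O(|\xi|^3)$;
\item one thermal one, $\lambda_0=-\kappa_1'|\xi|^2+O(|\xi|^3)$.
\end{itemize}
Each has eigenprojection $P_j(\xi)=P_j^{(0)}+|\xi|P_j^{(1)}+O(|\xi|^2)$. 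The remainder of the semigroup is exponentially decaying: $\|e^{tB_1}-\sum_j e^{\lambda_j t}P_j\|\le Ce^{-\sigma t}$. After rescaling, I get:
\begin{itemize}
\item $e^{\lambda_{2,3}(\eps\xi)t/\eps^2}\to e^{-\kappa_0|\xi|^2t}$;
\item $e^{\lambda_0(\eps\xi)t/\eps^2}\to e^{-\kappa_1'|\xi|^2t}$, which reproduces the heat parts of \eqref{NS3}--\eqref{NS4};
\item the acoustic modes oscillate like $e^{\pm\i c|\xi|t/\eps}e^{-a|\xi|^2t}$;
\item the remainder gives $e^{-\sigma t/\eps^2}$.
\end{itemize}
The leading projections $P^{(0)}_{2,3}+P^{(0)}_0$ applied to $f_0$ give precisely the initial data \eqref{INS1}: Leray projection of the momentum, the quantity $q-\sqrt{2/3}n$, and the Poisson-modified Boussinesq relation.

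\textbf{Step 2: the linear estimate.} Writing $V(t)$ for the linear NSPF semigroup, I aim for
$$\|e^{tB_\eps}f_0-V(t)P_0f_0\|_{L^\infty}\le C\big(\eps(1+t)^{-3/4}+(1+t/\eps)^{-1}\big).$$
The contributions are as follows.
\begin{itemize}
\item The $O(\eps|\xi|)$ corrections in eigenvalues and projections give $\eps(1+t)^{-3/4}$. This uses the $L^{2,1}$ norm together with the Hausdorff--Young passage to $L^\infty_x$, spending one derivative.
\item The remainder, $e^{-\sigma t/\eps^2}$, is trivially bounded by $(1+t/\eps)^{-1}$.
\item The acoustic modes are highly oscillating, so a dispersive estimate in the style of Ukai \cite{Ukai-1} is needed. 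I bound $\sup_x|\int e^{\i x\xi\pm \i c|\xi|t/\eps}\hat g\,d\xi|$ by $C(1+t/\eps)^{-1}$ using the decay of the 3D wave propagator in $L^\infty$, paying derivatives from $H^5$. This is the source of the initial layer.
\end{itemize}
Under \eqref{F01}, $P_{\pm1}^{(0)}f_0=0$ and $P_1f_0=0$, so the layer term disappears and only $O(\eps)$ remains.

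\textbf{Step 3: the nonlinear terms.} Here the singular factors $\eps^{-1}G_2$ and $\eps^{-2}G_3$ have to be absorbed.
\begin{itemize}
\item $G_2=\Gamma$ is microscopic, and $e^{tB_\eps}P_1$ gains a factor $\eps$: its macroscopic part is $O(\eps|\xi|)$ through $P_j^{(1)}$.
\item $G_3=O(\eps^2\phi_\eps^2)$, so $\eps^{-2}G_3$ is quadratic and of order one.
\item $G_1$ and the limiting forcings $n\nabla\phi$ and $\nabla\cdot(m\otimes m)$ are compared using the decay from Theorem \ref{thm-1}.
\end{itemize}
The differences $\Gamma(f_\eps,f_\eps)-\Gamma(u,u)$ are controlled by $\|f_\eps-u\|$ times $\delta_0(1+t)^{-3/4}$, which closes a Gronwall/bootstrap-type inequality for $\sup_t$ of the weighted error $\big(\eps|\ln\eps|^2(1+t)^{-3/4}+(1+t/\eps)^{-1}\big)^{-1}\|f_\eps-u\|_{L^\infty}$. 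The logarithms come from time integrals: $\int_0^t (1+(t-s)/\eps)^{-1}(1+s)^{-3/2}ds\lesssim \eps|\ln\eps|(1+t)^{-1}$ in one place, and the layer of the solution feeding back through the nonlinearity gives the second $|\ln\eps|$. In the well-prepared case only one log appears. The $\nabla\phi$ estimate follows from \eqref{Pm2}, \eqref{NS5} and the $n$ component, since $(I-\Delta)^{-1}\nabla$ is bounded in $L^\infty$.

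\textbf{Main obstacle.} I expect Step 3 to be the hardest: the Duhamel terms involving $\eps^{-1}$ against the oscillatory acoustic modes, where the time integral must produce only logarithmic losses. I would first try integrating by parts in $s$ on the phase $e^{\pm\i c|\xi|(t-s)/\eps}$, using the equation for $\partial_s$ of the nonlinearity.
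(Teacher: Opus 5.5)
The Duhamel comparison and the bootstrap on the weighted error are also the paper's architecture. However, Step 1 rests on the identity $e^{tB_\eps(\xi)}=e^{(t/\eps^2)B_1(\eps\xi)}$, and this identity is false for this operator. The operator $\eps^{-2}B_1(\eps\xi)$ contains the Poisson term $-\frac{\mathrm{i}}{\eps}\frac{v\cdot\xi}{1+\eps^2|\xi|^2}P_d$, whereas your $B_\eps(\xi)$ contains $-\frac{\mathrm{i}}{\eps}\frac{v\cdot\xi}{1+|\xi|^2}P_d$, because the screening length in \eqref{m2} is not rescaled. The paper flags exactly this, and it is the central difficulty of the problem.

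On $\{\eps|\xi|\le r_0\}$ the frequency $|\xi|$ is unrestricted. The Poisson term has the same size $\eps|\xi|$ as the transport term, with coefficient $(1+|\xi|^2)^{-1}$, which is not small for $|\xi|\lesssim1$. So the leading macroscopic operator is $P_0v_1P_0+\frac{v_1}{1+|\xi|^2}P_d$. One needs, uniformly in $|\xi|$, $\lambda_j=-\mathrm{i}\eps|\xi|u_j(|\xi|)-\eps^2d_j(|\xi|)+O(\eps^3|\xi|^3)$, with $u_{\pm1}(|\xi|)=\mp\sqrt{5/3+(1+|\xi|^2)^{-1}}$, $d_0(|\xi|)=\kappa_1|\xi|^2\frac{3|\xi|^2+6}{5|\xi|^2+8}$ and $|\xi|$-dependent $E_j(\xi)$; see \eqref{sp4} and \eqref{egf1-1}.

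Your scaling instead evaluates all these coefficients at $\eps|\xi|\to0$. This gives a constant sound speed $\sqrt{8/3}$, a pure heat multiplier $e^{-\frac34\kappa_1|\xi|^2t}$ (your $\kappa_1'$), and the local relation $2n+\sqrt{2/3}\,q=0$ in place of $n+(I-\Delta_x)^{-1}n+\sqrt{2/3}\,q=0$. In particular, your claim that $P_j^{(0)}f_0$ reproduces \eqref{INS1} contradicts your own scaling. What you get is the limit of a model with screening length $\eps$. It differs from $V(t)$ by $O(1)$ at $|\xi|\sim1$, so no $O(\eps)$ rate can follow.

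What is missing is a genuinely two-parameter perturbation argument. Reduce the eigenproblem via $P_0/P_1$ to $D_0(\sigma,\eps\eta)=0$ and the $3\times3$ determinant equation $D_1(\sigma,\eta,\eps)=0$. Use that the unperturbed roots $u_j(\eta)$ stay uniformly separated in $\eta$. Then construct $\sigma_j(\eta,\eps)$ by a contraction with constants uniform in $\eta$ (Lemmas~\ref{3dmvpbsp9} and~\ref{3dmvpbsp7}).

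For the same reason the dispersive step cannot simply quote the wave propagator. The phase $|\xi|u_{\pm1}(|\xi|)t/\eps$ is not homogeneous, which is why the paper proves the radial estimate of Lemma~\ref{3dmvpbfas4} assuming only $|(1/\varrho')^{(k)}(r)|\le C(1+r)^{-k}$. Moreover, that $L^\infty$ decay is paid for with $L^1$-based norms ($W^{3,1}$ in Lemma~\ref{3dmvpbfas6}), not with $H^5$.

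Three further pieces are missing even once Step 1 is repaired.
\begin{itemize}
\item In the well-prepared case you may not bound the remainder by $e^{-\sigma t/\eps^2}\le(1+t/\eps)^{-1}$, since that destroys \eqref{thm-2-2} near $t=0$. You need that for $f_0\in N_0$ the remainder is $O(\eps|\xi|)e^{-\sigma_0t/\eps^2}$ (Lemma~\ref{3dmvpbfas5}).
\item The source $\eps^{-2}v\chi_0\cdot\nabla_x(I-\Delta_x)^{-1}G_3$ has size $O(1)$, so you must explain why it is absent from the limit. It is a source of the form $v\chi_0\cdot\nabla_xg$, which $V(t)$ annihilates (see \eqref{lsp2j3-3}), so only the dispersing acoustic modes see it.
\item For $\eps^{-1}G_2$, the gain of one $\eps$ does not by itself identify the limit. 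You need the second-order approximation $\eps^{-1}e^{tB_\eps/\eps^2}g\approx V(t)P_0(v\cdot\nabla_xL^{-1}g)$ for $P_0g=0$ (Lemma~\ref{3dmvpbfas7}, in the paper's normalization of $B_\eps$). Combined with Lemma~\ref{ocr-1}, this converts $P_0(v\cdot\nabla_xL^{-1}\Gamma(P_0f_\eps,P_0f_\eps))$ into the convection terms $\nabla_x\cdot(m_\eps\otimes m_\eps)$ and $\frac53\nabla_x\cdot(m_\eps q_\eps)$, plus gradients killed by $V$.
\end{itemize}
On the other hand, the integration by parts in $s$ you plan for the main obstacle is unnecessary. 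The bound $(1+(t-s)/\eps)^{-1}$ integrated against $(1+s)^{-3/2}$ already yields $\eps|\ln\eps|(1+t)^{-1}$, and this is how the paper proceeds.
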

\begin{remark}\label{dl-rmkil}
Define the oscillation part of $f_{\epsilon}$ as
\be
u_{\epsilon}^{osc}(t,x,v)=\sum_{j=-1,1}\mathcal{F}^{-1}\(e^{\frac{-\mathrm{i}|\xi|u_j(|\xi|)t}{\epsilon}-d_j(|\xi|)t}\big(P_0\hat{f}_0,E_j(\xi)\big)_{\xi}E_j(\xi)\),\label{dlxg-uosc}
\ee
where $u_j(|\xi|)$, $d_j(|\xi|)$ and $E_j(\xi)$, $j=-1,1$ are defined by \eqref{sp4} and \eqref{egf1-1}, respectively. Then, under the first assumption of Theorem \ref{thm-2}, we have
$$
\|f_{\epsilon}(t)-u(t)-u_{\epsilon}^{osc}(t)-e^{\frac{tB_{\epsilon}}{\epsilon^2}}P_1f_0\|_{L^{\infty}}\leq C\delta_0\epsilon|\ln\epsilon|(1+t)^{-\frac{3}{4}}.
$$
Hence, $u_{\epsilon}^{osc}(t)$ and $e^{\frac{tB_{\epsilon}}{\epsilon^2}}P_1f_0$ are the essential components for generating the initial layer.
\end{remark}

\begin{remark}
The coefficients $|\ln\epsilon|^2$ and $|\ln\epsilon|$ in Theorem \ref{thm-2} come from the time convolutions with the initial layer. 
Precisely, under the first assumption of Theorem \ref{thm-2},  the terms related to $|\ln\epsilon|^2$ consist of two parts: the first part  associated with $|\ln\epsilon|$ mainly arises from the convolution of $e^{\frac{t}{\eps^2}B_{\eps}}$ and nonlinear term, namely (See \eqref{dlth2-3-1-j})
$$
\int^t_0\(1+\frac{t-s}{\epsilon}\)^{-1}(1+s)^{-\frac{3}{2}}ds\leq C\epsilon|\ln\epsilon|(1+t)^{-1};
$$
the second part  associated with $|\ln\epsilon|^2$  mainly arises from the convolution of $Y(t)$ and the initial layer, such as (See \eqref{dlth2-3-1-7} and \eqref{dlth2-4-7-6})
\bmas
&\quad\int^{t}_0(1+t-s)^{-\frac{3}{4}}\ln\(2+\frac{1}{t-s}\)\(1+\frac{s}{\epsilon}\)^{-1}(1+s)^{-\frac{3}{4}}ds\\
&\leq C(1+t)^{-\frac{3}{4}}\ln\(2+\frac{2}{t}\)\int^{1}_0\(1+\frac{s}{\epsilon}\)^{-1}ds+\cdot\cdot\cdot\leq C\epsilon|\ln\epsilon|^2(1+t)^{-\frac{3}{4}}+\cdot\cdot\cdot.
\emas
However, under the second assumption of Theorem \ref{thm-2}, the initial layer vanishes, so the estimation  only involve the parts associated with $|\ln\epsilon|$ (See \eqref{dlth-jre-i2314}-\eqref{dlth-2-2-5}).
\end{remark}

Before the rest of the introduction, we will briefly present the main ideas and the approach of the analysis in the proof. The convergence rates given in Theorem \ref{thm-2} of diffusion limit of the mVPB system are proved based on the spectral analysis \cite{Li-4} and the ideas inspired by \cite{Bardos-3,Li-1}.  First of all, the solution $f_{\epsilon}=f_{\epsilon}(t,x,v)$ to the mVPB system \eqref{Pm1}-\eqref{Pm3} can be represented by
$$
f_{\epsilon}(t)=e^{\frac{tB_{\epsilon}}{\epsilon^2}}f_0+\int_0^te^{\frac{(t-s)B_{\epsilon}}{\epsilon^2}}\bigg(G_1 +\frac{1}{\epsilon}G_2 +\frac{1}{\epsilon^2}v\sqrt{M}\cdot\nabla_x(I-\Delta_x)^{-1}G_3 \bigg)(s)ds,
$$
and the solution $u(t,x,v)=n(t,x)\chi_0+m(t,x)\cdot v\chi_0+q(t,x)\chi_4$ to the NSPF system \eqref{NS1}-\eqref{INS1} can be represented by
$$
u(t)=V(t)P_0f_0+\int^t_0V(t-s)\big(Z_1(u)+\div_xZ_2(u)\big)(s)ds,
$$
where $V(t)$ is defined by \eqref{lns4j1-1}, and $Z_i$ $(i=1,2)$ are given by \eqref{nlnsz1}-\eqref{nlnsz2}.

Due to the influence of the electric potential, the linear mVPB operator $B_{\epsilon}(\xi)=L-\mathrm{i}\epsilon(v\cdot\xi)-\mathrm{i}\epsilon\frac{v\cdot\xi}{1+|\xi|^2}P_d$ has no scaling property $B_{\epsilon}(\xi)=B_{1}(\eps\xi)$ as the linear  Boltzmann operator. Thus, we will establish a new non-local implicit function theorem to show that there exist five eigenvalue $\lambda_j(|\xi|,\epsilon)$, $j=-1,0,1,2,3$ for $\epsilon|\xi|$ being small and they satisfy (See Lemma \ref{3dmvpbsp9}):
\bq\label{bjsp-1}
\lambda_j(|\xi|,\epsilon)=-\mathrm{i}\epsilon |\xi|u_j(|\xi|)-\epsilon^2d_j(|\xi|)+O(\epsilon^3|\xi|^3),\quad \epsilon|\xi|\leq r_0,
\eq
where $u_j(|\xi|)$ and $d_j(|\xi|)$ are defined by \eqref{sp4}.

Moreover, by applying the expansion \eqref{bjsp-1} to Lemma \ref{3dmvpbsp12}, we can rewrite $e^{\frac{tB_{\epsilon}(\xi)}{\epsilon^2}}$ as
$$
e^{\frac{tB_{\epsilon}(\xi)}{\epsilon^2}}\hat{f}_0=\sum^3_{j=-1}e^{\frac{-\mathrm{i}|\xi|u_j(|\xi|)t}{\epsilon}-d_j(|\xi|)t+O(\epsilon|\xi|^3)t}\(\big(P_0\hat{f}_0,E_j(\xi)\big)_{\xi}E _j(\xi)+O(\epsilon|\xi|)\)+S_2(t,\xi,\epsilon)\hat{f}_0,
$$
where $E_j(\xi)$ $(j=-1,0,1,2,3)$ are defined by \eqref{egf1-1}, and $S_2(t,\xi,\epsilon)$ is given in Lemma \ref{3dmvpbsp12} and satisfies $\|S_2(t,\xi,\epsilon)\hat{f}_0\|_{\xi}\le Ce^{-\frac{\sigma_0t}{\eps^2}}\|\hat{f}_0\|_{\xi}$. Thus, by using the following key estimate (See Lemma \ref{3dmvpbfas4}):
$$
\bigg\|\mathcal{F}^{-1}\(e^{\frac{-\mathrm{i}|\xi|u_{j}(|\xi|)t}{\epsilon}}\alpha(\omega)(1+|\xi|)^{-3}\)\bigg\|_{L^{\infty}_x}\leq C\(\frac{t}{\epsilon}\)^{-1}, \quad j=\pm1,
$$
where $\alpha(\omega)$ is a smooth function for $\omega=\frac{\xi}{|\xi|}\in\S^2$, we can establish the optimal convergence rate of the semigroup $e^{\frac{tB_{\epsilon}}{\epsilon^2}}$ to its first and second order fluid limits in  $L^{\infty}$ norm as given in Lemma \ref{3dmvpbfas6}.

By using the estimates on the convergence rates for the fluid limits of the linear mVPB system, we can  establish the optimal convergence rate of the strong solution $(f_{\epsilon},\phi_{\epsilon})=(f_{\epsilon}(t,x,v),\phi_{\epsilon}(t,x))$ to the nonlinear mVPB system towards the solution $(u,\phi)=(u(t,x,v),\phi(t,x))$ to the NSPF system. Hence, we obtain the precise estimation on the initial layer.

The rest of this paper will be organized as follows. In Section 2, we will present the results about the spectrum analysis of the linear operator related to the linearized mVPB system. In Section 3, we will establish the first and second order fluid approximations of the solution to the linearized mVPB system. In Section 4, we will prove the convergence and establish the convergence rate of the global solution to the original nonlinear mVPB system to the solution to the nonlinear NSPF system.


\section{Spectral analysis}
\label{sect2}
\setcounter{equation}{0}

In this section, we are concerned with the spectral analysis of the linear mVPB operator $B_{\epsilon}(\xi)$ defined by \eqref{bep1}, which will be applied to study diffusion limit of the solution to the mVPB system \eqref{Pm1}-\eqref{Pm3}.

From \eqref{Pm1}-\eqref{Pm3}, we have the following linearized mVPB system:
\bq\label{Bm1}
\left\{\bal
\epsilon^2\partial_tf_{\epsilon}=B_{\epsilon}f_{\epsilon},\quad t>0,\\
f_{\epsilon}(0,x,v)=f_{0}(x,v),
\ea\right.
\eq
where
\bmas
&B_{\epsilon}f=Lf-\epsilon v\cdot\nabla_xf-\epsilon v\cdot\nabla_x(I-\Delta_x)^{-1}P_df,\\
&P_df=(f,\chi_0)\chi_0, \quad \forall f\in L^2\big(\R^3_v\big).
\emas

Taking Fourier transform to \eqref{Bm1} to get
\bq
\left\{\bal
\epsilon^2\partial_t\hat{f}_{\epsilon}=B_{\epsilon}(\xi)\hat{f}_{\epsilon},\quad t>0,\\
\hat{f}_{\epsilon}(0,\xi,v)=\hat{f}_{0}(\xi,v),
\ea\right.
\eq
where
\bq
B_{\epsilon}(\xi)=L-\mathrm{i}\epsilon(v\cdot\xi)-\mathrm{i}\epsilon\frac{v\cdot\xi}{1+|\xi|^2}P_d.\label{bep1}
\eq

We can regard $B_{\epsilon}(\xi)$ as a linear operator from the space $L^2_{\xi}(\mathbb{R}^3)$ to itself because
\bq\label{fs1}
\|f\|^2\leq\|f\|^2_{\xi}\leq2\|f\|^2.
\eq

We denote $\rho(A)$ and $\sigma(A)$ to be resolvent set and spectrum set of the operator $A$, respectively. The essential spectrum of $A$, denoted by $\sigma_{ess}(A)$, is the set
$\{\lambda\in \mathbb{C} \,|\, \lambda-A~{\rm is~not~a~Fredholm~operator}\}$ (cf. \cite{Kato-1}). The discrete spectrum
of $A$, denoted by $\sigma_d(A)$, is the set $\sigma(A)\setminus \sigma_{ess} (A)$ which consists of  all isolated eigenvalues with finite multiplicity.

\begin{lem}\label{3dmvpbsp1}
The operator $B_{\epsilon}(\xi)$ generates a strongly continuous contraction semigroup on $L^2_{\xi}(\mathbb{R}^3)$, which satisfies
\bq
\|e^{tB_{\epsilon}(\xi)}f\|_{\xi}\leq\|f\|_{\xi},\quad \forall t>0, \  f\in L^2_{\xi}(\mathbb{R}^3).
\eq
\end{lem}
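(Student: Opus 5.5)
We will prove the lemma with the Lumer–Phillips theorem on the Hilbert space $L^2_\xi(\R^3)$. By \eqref{fs1}, the norm $\|\cdot\|_\xi$ is equivalent to the usual $L^2$ norm, so $L^2_\xi(\R^3)$ is a Hilbert space, closedness questions can be settled in $L^2$, and we take $D(B_\eps(\xi))=D(L)$. The plan is to show that $B_\eps(\xi)$ is closed and densely defined, dissipative with respect to $(\cdot,\cdot)_\xi$, and that $\lambda-B_\eps(\xi)$ is onto for some (hence all) $\lambda>0$.

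\textbf{Dissipativity.} For $f\in D(L)$ we compute $\mathrm{Re}(B_\eps(\xi)f,f)_\xi$ term by term.
\begin{itemize}
\item The collision part contributes $\mathrm{Re}(Lf,f)+\frac{1}{1+|\xi|^2}\mathrm{Re}(P_dLf,P_df)$. The second term vanishes because $P_dL=0$, since $\chi_0\in N_0$ and $L$ is self-adjoint. The first term is at most $-\mu\|P_1f\|^2\le 0$ by \eqref{Lf}.
\item The term $-\i\eps(v\cdot\xi)$ is skew-adjoint, so $(-\i\eps(v\cdot\xi)f,f)$ is purely imaginary.
\end{itemize}
The remaining contributions mix these pieces, and the key identity is the cancellation between the streaming term seen through the weight and the Poisson term. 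We have
\[
\frac{1}{1+|\xi|^2}\big(P_d(-\i\eps(v\cdot\xi)f),P_df\big)=-\frac{\i\eps}{1+|\xi|^2}(f,(v\cdot\xi)\chi_0)\,\overline{(f,\chi_0)}.
\]
The Poisson term gives
\[
\Big(-\i\eps\frac{v\cdot\xi}{1+|\xi|^2}P_df,f\Big)=-\frac{\i\eps}{1+|\xi|^2}(f,\chi_0)\,\overline{(f,(v\cdot\xi)\chi_0)}.
\]
Writing $a=(f,(v\cdot\xi)\chi_0)\overline{(f,\chi_0)}$, these two terms sum to $-\frac{\i\eps}{1+|\xi|^2}(a+\bar a)$, which is purely imaginary.

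The Poisson term also has a weighted part $\frac{1}{1+|\xi|^2}\big(P_d(\cdots P_df),P_df\big)$. It involves $((v\cdot\xi)\chi_0,\chi_0)=0$, so it vanishes.

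Hence $\mathrm{Re}(B_\eps(\xi)f,f)_\xi=\mathrm{Re}(Lf,f)\le0$, and $B_\eps(\xi)$ is dissipative in $L^2_\xi$. The same computation applies to the adjoint $B_\eps(\xi)^*$ taken with respect to $(\cdot,\cdot)_\xi$; its form is obtained by flipping the signs of the imaginary terms.

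\textbf{Generation.} Write $B_\eps(\xi)=A+K+T$ with
\[
A=-\nu(v)-\i\eps(v\cdot\xi),\qquad T=-\i\eps\frac{v\cdot\xi}{1+|\xi|^2}P_d .
\]
Here $A$ is a multiplication operator with nonpositive real part, closed on $D(L)$, and it generates a contraction semigroup. The operators $K$ and $T$ are bounded; $T$ is bounded because $(v\cdot\xi)\chi_0\in L^2$. Therefore $B_\eps(\xi)$ is closed, densely defined, and generates a $C_0$-semigroup by bounded perturbation. Consequently $\lambda-B_\eps(\xi)$ is surjective for all sufficiently large $\lambda$. Combined with dissipativity in the equivalent Hilbert norm $\|\cdot\|_\xi$, the Lumer–Phillips theorem shows that the semigroup is contractive in $\|\cdot\|_\xi$, which yields the stated estimate.

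The main obstacle is the cross-term cancellation above. It depends on choosing the weight $\frac{1}{1+|\xi|^2}$ in $(\cdot,\cdot)_\xi$ to match the Poisson coefficient exactly; with any other weight the mixed terms would not combine into $a+\bar a$, and dissipativity would fail.
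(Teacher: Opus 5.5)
Your proof is correct. Your dissipativity computation is the same ``direct computation'' the paper relies on: the two mixed terms cancel into $-\frac{\i\eps}{1+|\xi|^2}(a+\bar a)$, which gives $\mathrm{Re}(B_\eps(\xi)f,f)_\xi=(Lf,f)\le0$.

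The difference is in how you obtain generation. The paper assumes $B_\eps(\xi)$ is closed and densely defined. It uses $(f,g)_\xi=(f,g+\frac{1}{1+|\xi|^2}P_dg)$ to identify the adjoint in $L^2_\xi$ as $B_\eps(\xi)^*=B_\eps(-\xi)$, and notes that this adjoint is also dissipative. It then concludes with Pazy's criterion (Cor.~4.4: a closed, densely defined operator that is dissipative together with its adjoint generates a contraction semigroup).

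You instead get the range condition from the bounded perturbation theorem. You apply it to the multiplication operator $-\nu(v)-\i\eps(v\cdot\xi)$ plus the bounded operators $K$ and $T$. You then need only one-sided dissipativity for Lumer--Phillips, so your remark about the adjoint is not needed.

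Your route is more self-contained. It proves closedness, density and surjectivity of $\lambda-B_\eps(\xi)$ rather than taking them for granted. It also avoids justifying the adjoint identification at the level of domains. The paper's route is shorter on the page and never has to split off an unperturbed generator.

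One small fix is needed for hard potentials with $\gamma<1$ and $\xi\neq0$. There $D(L)=\{f:\nu f\in L^2\}$ is not a valid domain, because $(v\cdot\xi)f$ need not lie in $L^2$. Take instead the maximal domain of the multiplication operator, $\{f\in L^2:(\nu(v)+|v\cdot\xi|)f\in L^2\}$, which equals $D(L)$ for hard spheres. On this domain all your steps go through unchanged; in particular $(Lf,\chi_0)=(f,L\chi_0)=0$ still holds, since it is contained in $D(L)$.
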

\begin{proof}
Since $P_d$ is self-adjoint projection operator, it follows that $(P_df,P_dg)=(P_df,g)=(f,P_dg)$ and hence
\bq\label{InP1}
(f,g)_{\xi}=\(f,g+\frac{1}{1+|\xi|^2}P_dg\)=\(f+\frac{1}{1+|\xi|^2}P_df,g\).
\eq
By \eqref{InP1}, we have for any $f,g\in L^2_{\xi}(\R^3_v)$ that
\be
(B_{\epsilon}(\xi)f,g)_{\xi}=(f,B_{\epsilon}(\xi)^{\ast}g)_{\xi},
\ee
where  $B_{\epsilon}(\xi)^{\ast}=B_{\epsilon}(-\xi).$  Direct computation gives rise to the
dissipation of both $B_{\epsilon}(\xi)$ and $B_{\epsilon}(\xi)^{\ast}$, namely
$$
\mathrm{Re}(B_{\epsilon}(\xi)f,f)_{\xi}=\mathrm{Re}(B_{\epsilon}(\xi)^{\ast}f,f)_{\xi}=(Lf,f)\leq0.
$$
Since $B_{\epsilon}(\xi)$ is a densely defined closed operator, it follows from Corollary 4.4 on p.15 of \cite{Pazy-1} that $B_{\epsilon}(\xi)$ generates a strongly continuous contraction semigroup on $L^2_{\xi}(\R^3_v)$.
\end{proof}
\begin{lem}\label{3dmvpbsp2}
The following conditions hold for all $\xi\in\R^3$ and $\epsilon\in(0,1)$.

(1) $\sigma_{ess}(B_{\epsilon}(\xi))\subset\{\lambda\in\mathbb{C}\,|\,\mathrm{Re}\lambda\leq-\nu_0\}$ and $\sigma(B_{\epsilon}(\xi))\cap\{\lambda\in\mathbb{C}\,|\,-\nu_0<\mathrm{Re}\lambda\leq0\}\subset\sigma_d(B_{\epsilon}(\xi))$.

(2) If $\lambda$ is an eigenvalue of $B_{\epsilon}(\xi)$, then $\mathrm{Re}\lambda<0$ for any $\xi\neq0 $, and $\lambda(\xi)=0$ if and only if $\xi=0$.
\end{lem}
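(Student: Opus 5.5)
For part (1) I will use a compact-perturbation argument. Write
$$B_{\epsilon}(\xi)=c(\xi)+K_1(\xi),$$
where $c(\xi)=-\nu(v)-\mathrm{i}\epsilon(v\cdot\xi)$ is a multiplication operator and
$$K_1(\xi)=K-\mathrm{i}\epsilon\frac{v\cdot\xi}{1+|\xi|^2}P_d .$$
The operator $K$ is compact on $L^2(\R^3_v)$. The second piece of $K_1(\xi)$ has finite rank: it maps $f$ to $(f,\chi_0)v\chi_0$, which is bounded because $v\chi_0\in L^2$. Hence $K_1(\xi)$ is compact, and also compact on $L^2_\xi$ by the norm equivalence \eqref{fs1}.

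For the multiplication operator, the spectrum is the closure of the range of $-\nu(v)-\mathrm{i}\epsilon v\cdot\xi$. By \eqref{nu1} and the assumption $\nu(v)\ge\nu_0$, this set lies in $\{\mathrm{Re}\lambda\le-\nu_0\}$. For $\mathrm{Re}\lambda>-\nu_0$, the operator $\lambda-c(\xi)$ is invertible with bounded inverse, so $\lambda-B_\epsilon(\xi)=(\lambda-c(\xi))\big(I-(\lambda-c(\xi))^{-1}K_1(\xi)\big)$ is Fredholm of index zero. This gives $\sigma_{ess}\subset\{\mathrm{Re}\lambda\le-\nu_0\}$ by Weyl's theorem on compact perturbations (cf. \cite{Kato-1}).

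To get the second claim of (1), I will use analytic Fredholm theory. The family $\lambda\mapsto(\lambda-c(\xi))^{-1}K_1(\xi)$ is analytic and compact on the connected region $\{\mathrm{Re}\lambda>-\nu_0\}$. For large real $\lambda$ its norm is below $1$, so $I-(\lambda-c(\xi))^{-1}K_1(\xi)$ is invertible there. Consequently the spectrum in this half-plane consists of isolated eigenvalues of finite multiplicity. In particular, $\sigma(B_\epsilon(\xi))\cap\{-\nu_0<\mathrm{Re}\lambda\le0\}\subset\sigma_d$.

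For part (2), let $B_\epsilon(\xi)f=\lambda f$ with $f\ne0$. I will take the $\xi$-inner product with $f$ and use the computation in Lemma \ref{3dmvpbsp1}:
$$\mathrm{Re}\lambda\|f\|_\xi^2=(Lf,f)\le-\mu\|P_1f\|^2 .$$
Thus $\mathrm{Re}\lambda\le0$, and $\mathrm{Re}\lambda=0$ forces $P_1f=0$, so $f=P_0f\in N_0$ and $Lf=0$. The eigen-equation then reduces to
$$-\mathrm{i}\epsilon\Big[(v\cdot\xi)f+\frac{v\cdot\xi}{1+|\xi|^2}P_df\Big]=\lambda f$$
with $\lambda=\mathrm{i}\tau$ purely imaginary.

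Next I write $f=\sum_{j=0}^4 a_j\chi_j$ and project onto $N_0$ and onto $N_0^\perp$. The left side contains $(v\cdot\xi)\chi_j$, whose components outside $N_0$ must vanish since $\lambda f\in N_0$. These components are $P_1(v\cdot\xi\,\chi_0)=0$, while $P_1\big((v\cdot\xi)v_j\chi_0\big)$ and $P_1\big((v\cdot\xi)\chi_4\big)$ are nonzero unless $\xi=0$. More concretely, I will take $\xi=|\xi|e_1$ by rotation invariance and require the $P_1$-part of $v_1 f$ to vanish. This forces $m=0$ and then $\sqrt{2/3}\,a_0+\dots$; the precise condition is that the $v_1|v|^2\sqrt{M}$ and $v_1v_j\sqrt{M}$ terms have zero $P_1$-projection, which gives $a_1=a_2=a_3=a_4=0$ when $\xi\ne0$. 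Then $f=a_0\chi_0$, and the equation becomes $-\mathrm{i}\epsilon|\xi|\big(1+\frac{1}{1+|\xi|^2}\big)a_0\chi_1=\mathrm{i}\tau a_0\chi_0$, so $a_0=0$, a contradiction. Hence $\mathrm{Re}\lambda<0$ for $\xi\ne0$.

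For $\xi=0$ we have $B_\epsilon(0)=L$, and $0$ is an eigenvalue with eigenspace $N_0$. Conversely, if $\lambda(\xi)=0$ then $\mathrm{Re}\lambda=0$, which by the above forces $\xi=0$.

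The main obstacle is this last algebraic step: verifying that the $P_1$-projections of $(v\cdot\xi)\chi_j$ for $j=1,\dots,4$ are linearly independent. I will handle it by explicit computation with Hermite-type polynomials, noting that $v_1v_j\sqrt{M}$ for $j\ne1$, $(v_1^2-|v|^2/3)\sqrt{M}$, and $v_1(|v|^2-5)\sqrt{M}$ are mutually orthogonal and nonzero.
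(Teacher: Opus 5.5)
Your proof is correct and follows the paper's route. Part (1) is the same compact-perturbation argument; your analytic Fredholm step only spells out what the paper takes from Kato's Theorem 5.35. For part (2), where the paper just cites Lemma 2.2 of \cite{Li-2}, your argument is the standard one behind that citation: the identity $\mathrm{Re}\lambda\|f\|_\xi^2=(Lf,f)\le-\mu\|P_1f\|^2$, followed by showing $P_1\big((v\cdot\xi)f\big)=0$ forces $f=0$ via the four mutually orthogonal functions. Delete the stray sentence ``This forces $m=0$ and then $\sqrt{2/3}\,a_0+\dots$'', which the correct condition stated right after it supersedes.
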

\begin{proof}
Set
\be
B_{\epsilon}(\xi)=c_{\epsilon}(\xi)+K-\mathrm{i}\epsilon\frac{v\cdot\xi}{1+|\xi|^2}P_d,\quad     c_{\epsilon}(\xi)=-\nu(v)-\mathrm{i}\epsilon(v\cdot\xi).
\ee
By \eqref{nu1}, $\lambda-c_{\epsilon}(\xi)$ is invertible for $\mathrm{Re}\lambda>-\nu_0$ and hence $\sigma(c_{\epsilon}(\xi))\subset\{\lambda\in\mathbb{C}|\mathrm{Re}\lambda\leq-\nu_0\}$. Since $B_{\epsilon}(\xi)$ is a compact perturbation of $c_{\epsilon}(\xi)$, it follows from Theorem 5.35 in p.244 of \cite{Kato-1} that $\sigma_{ess}(B_{\epsilon}(\xi))=\sigma_{ess}(c_{\epsilon}(\xi))$ and $\sigma(B_{\epsilon}(\xi))$ in the domain $\mathrm{Re}\lambda>-\nu_0$ consists of discrete eigenvalues with possible accumulation points only on the line $\mathrm{Re}\lambda=-\nu_0$. This proves (1).

By a similar argument as Lemma 2.2 in \cite{Li-2}, we can prove (2).
\end{proof}

\begin{lem}\label{3dmvpbsp6}
For any fixed $\epsilon\in(0,1)$, the following facts hold.

(1) For $\xi\in\mathbb{R}^3$, there exists $y_1>0$ such that
\bq\label{reso13}
\rho(B_{\epsilon}(\xi))\supset\Big\{\lambda\in\mathbb{C} \,|\,\mathrm{Re}\lambda\geq-\frac{\mu}{2},|\mathrm{Im}\lambda|\geq y_1\Big\}\cup\{\lambda\in\mathbb{C}\,|\,\mathrm{Re}\lambda>0\}.
\eq

(2) For any $r_0>0$, there exists $\alpha=\alpha(r_0)>0$ such that for $\epsilon|\xi|\geq r_0$,
\bq\label{reso13j1}
\sigma(B_{\epsilon}(\xi))\subset\{\lambda\in\mathbb{C} \,|\,\mathrm{Re}\lambda<-\alpha\}.
\eq

(3) For any $\delta>0$, there exists $r_1=r_1(\delta)>0$ such that for $\epsilon|\xi|\leq r_1$,
\bq\label{reso14}
\sigma(B_{\epsilon}(\xi))\cap\Big\{\lambda\in\mathbb{C} \,|\,\mathrm{Re}\lambda\geq-\frac{\mu}{2}\Big\}\subset\big\{\lambda\in\mathbb{C} \,|\,|\lambda|\leq\delta\big\}.
\eq
\end{lem}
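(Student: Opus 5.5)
The plan is the standard resolvent-decomposition argument of \cite{Li-2,Li-4}, adapted to $B_{\epsilon}(\xi)$. The key structural fact is that $\xi$ and $\epsilon$ enter only through $\eta=\epsilon\xi$ in $c_{\epsilon}(\xi)=-\nu(v)-\mathrm{i}(v\cdot\eta)$, together with the extra term $\mathrm{i}\epsilon\frac{v\cdot\xi}{1+|\xi|^2}P_d$. That extra term is rank one with norm at most $C\epsilon|\xi|/(1+|\xi|^2)\le C\min\{\epsilon|\xi|,\epsilon\}$, hence uniformly small or uniformly bounded. For $\mathrm{Re}\lambda>-\nu_0$ we write
$$
\lambda-B_{\epsilon}(\xi)=\big(I-K(\lambda-c_{\epsilon}(\xi))^{-1}+\mathrm{i}\epsilon\tfrac{v\cdot\xi}{1+|\xi|^2}P_d(\lambda-c_{\epsilon}(\xi))^{-1}\big)(\lambda-c_{\epsilon}(\xi)).
$$
We then use the known estimates (as in the Boltzmann case) $\|K(\lambda-c_{\epsilon}(\xi))^{-1}\|\le C(1+|\mathrm{Im}\lambda|)^{-1/2}$ and $\|K(\lambda-c_{\epsilon}(\xi))^{-1}\|\le C(1+\epsilon|\xi|)^{-1/2}$ for $\mathrm{Re}\lambda\ge-\nu_0/2$. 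The second term is treated the same way, using $P_d(\lambda-c)^{-1}f=((\lambda-c)^{-1}f,\chi_0)\chi_0$ and the decay of $\chi_0$, which gives the same type of bounds.

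\textbf{Step (1).} For $\mathrm{Re}\lambda>0$ the claim follows from Lemma \ref{3dmvpbsp1}, since $B_{\epsilon}(\xi)$ is dissipative and generates a contraction semigroup. For $\mathrm{Re}\lambda\ge-\mu/2$ and $|\mathrm{Im}\lambda|$ large we split on $\epsilon|\xi|$.
\begin{itemize}
\item If $\epsilon|\xi|$ is large, the bracket is invertible by the $(1+\epsilon|\xi|)^{-1/2}$ decay, uniformly in $\lambda$.
\item If $\epsilon|\xi|\le R$, we use the decay in $|\mathrm{Im}\lambda|$ of the $K$-term. For the $P_d$ term, $|\lambda-c|\ge |\mathrm{Im}\lambda|-R|v|$, so on $|v|\le |\mathrm{Im}\lambda|/(2R)$ the resolvent is $O(|\mathrm{Im}\lambda|^{-1})$, and the Gaussian weight controls the remaining region.
\end{itemize}
Alternatively, for $|\mathrm{Im}\lambda|$ large we can use the $P_0/P_1$ decomposition $\lambda P_0f=\dots$ as in \cite{Li-2}: write $\lambda-B=\lambda P_0-A_0(\xi)+\dots$ and show invertibility through the coercivity \eqref{Lf}. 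The constant $y_1$ may depend on $\epsilon$, which is allowed since $\epsilon$ is fixed.

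\textbf{Step (2).} Given $r_0$, for $\epsilon|\xi|\ge R_0$ large the bracket is invertible on $\mathrm{Re}\lambda\ge-\nu_0/2$, so there is no spectrum there. For the compact range $r_0\le\epsilon|\xi|\le R_0$, and by (1) also bounded $|\mathrm{Im}\lambda|\le y_1$, we argue by contradiction. Suppose there are sequences $\xi_n,\lambda_n$ with $\mathrm{Re}\lambda_n\to0$. Since $\epsilon$ is fixed, $|\xi|$ lies in a compact set, so we may extract $\xi_n\to\xi_0\neq0$ and $\lambda_n\to\lambda_0$ with $\mathrm{Re}\lambda_0=0$. Continuity of $B_{\epsilon}(\xi)$ in $\xi$ (the perturbation $\mathrm{i}\epsilon v\cdot\xi$ is relatively bounded) and the discreteness from Lemma \ref{3dmvpbsp2} then make $\lambda_0$ an eigenvalue of $B_{\epsilon}(\xi_0)$. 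This contradicts Lemma \ref{3dmvpbsp2}(2). Combining with (1) yields $\alpha(r_0)>0$.

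\textbf{Step (3).} For $\epsilon|\xi|\le r_1$ and $\mathrm{Re}\lambda\ge-\mu/2$ with $|\lambda|>\delta$, we decompose as in Lemma 2.3 of \cite{Li-2}:
$$
\lambda-B_{\epsilon}(\xi)=\lambda P_0+\lambda P_1-P_1LP_1+\text{(terms of size }\le C\epsilon|\xi|).
$$
Here $\lambda P_0$ is invertible on $N_0$ with norm $\le\delta^{-1}$, and $\lambda P_1-P_1LP_1$ is invertible on $N_0^\perp$ for $\mathrm{Re}\lambda>-\mu$ by \eqref{Lf}, with bound $C$ uniform in $|\mathrm{Im}\lambda|$ when we use the $\nu$-weighted version. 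All remaining terms, namely $\mathrm{i}\epsilon P_j(v\cdot\xi)P_k$ and $\mathrm{i}\epsilon\frac{v\cdot\xi}{1+|\xi|^2}P_d$, are bounded in operator norm relative to these inverses by $C\epsilon|\xi|(1+\delta^{-1})$. A Neumann series then gives invertibility once $r_1=r_1(\delta)$ is small.

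\textbf{Main obstacle.} The main difficulty is the unbounded term $P_1(v\cdot\xi)P_1$ in Step (3). It must be absorbed into $\lambda P_1-P_1LP_1$ using the resolvent of $-\nu-\mathrm{i}\epsilon v\cdot\xi$ rather than a crude norm bound. I would handle this by instead writing $\lambda P_1-P_1B_{\epsilon}(\xi)P_1=P_1(\lambda-c_{\epsilon}(\xi)-K)P_1$ and using the large-$|\mathrm{Im}\lambda|$ estimate from Step (1) for uniformity.
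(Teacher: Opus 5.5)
Your architecture is the standard one the paper relies on. The paper omits the proof and points to Lemma 3.15 of \cite{Li-4}, which proceeds the same way: factor $\lambda-B_\epsilon(\xi)$ through $\lambda-c_\epsilon(\xi)$ when $|\mathrm{Im}\lambda|$ or $\epsilon|\xi|$ is large, argue by compactness and Lemma \ref{3dmvpbsp2} on $r_0\le\epsilon|\xi|\le R_0$, and split into $P_0/P_1$ parts for small $\epsilon|\xi|$. Steps (1) and (2) are fine in substance. Step (3), however, has a real gap for the hard potentials $0\le\gamma<1$, which you flagged yourself but did not close.

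Your Neumann series treats $\mathrm{i}\epsilon P_1(v\cdot\xi)P_1$ as bounded relative to $(\lambda P_1-P_1LP_1)^{-1}$. That resolvent only gains the weight $\nu(v)\sim(1+|v|)^{\gamma}$. So the relative bound holds for hard spheres, where $|v|\le C\nu(v)$, but it is false for $\gamma<1$. Your proposed remedy is to invert $P_1(\lambda-c_\epsilon(\xi)-K)P_1$ using the large-$|\mathrm{Im}\lambda|$ estimates of Step (1). That does not reach the region (3) is really about, namely $|\lambda|>\delta$ with $|\mathrm{Im}\lambda|\le y_1$, where $K(\lambda-c_\epsilon(\xi))^{-1}$ is not small.

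The missing idea is to keep that term inside the $P_1$-block, where it costs nothing. On $N_0^\perp$ one has $P_1B_\epsilon(\xi)P_1=L-\mathrm{i}\epsilon P_1(v\cdot\xi)P_1$, since the $P_d$-term drops out ($P_dP_1=0$). Because $(v\cdot\xi f,f)$ is real, \eqref{Lf} gives $\mathrm{Re}\big((\lambda P_1-P_1B_\epsilon(\xi)P_1)f,f\big)\ge(\mathrm{Re}\lambda+\mu)\|f\|^2\ge\frac{\mu}{2}\|f\|^2$ for $f\in N_0^\perp$. The same bound holds for the adjoint, obtained by $\xi\to-\xi$, $\lambda\to\bar\lambda$. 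Hence this block is invertible with norm at most $2/\mu$, uniformly in $\xi$, $\epsilon$ and $\mathrm{Im}\lambda$.

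The remaining pieces are $\epsilon P_0(v\cdot\xi)P_0$, $\epsilon P_0(v\cdot\xi)P_1$, $\epsilon P_1(v\cdot\xi)P_0$ and $\epsilon\frac{v\cdot\xi}{1+|\xi|^2}P_d$ (which maps into $N_0$). All of them contain $P_0$, whose range is Gaussian-weighted, so they are genuinely bounded by $C\epsilon|\xi|$. The Schur complement on $N_0$ is then $\lambda P_0+O(\epsilon|\xi|+\epsilon^2|\xi|^2)$, which is invertible for $|\lambda|>\delta$ once $\epsilon|\xi|\le r_1(\delta)$.

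The same issue affects the justification in Step (2). When $\gamma<1$, $\mathrm{i}\epsilon v\cdot\xi$ is not relatively bounded with respect to $L$, so the passage to the limit should rest on the compactness of $K$ instead. For instance, $K(\lambda-c_\epsilon(\xi))^{-1}$ is norm-continuous in $(\lambda,\xi)$ on $\mathrm{Re}\lambda>-\nu_0$. To see this, pass to adjoints: a compact operator followed by uniformly bounded multipliers that converge pointwise converges in norm. The rank-one $P_d$-term is norm-continuous by dominated convergence. Thus invertibility of your bracket is an open condition, and $\lambda_n\in\sigma(B_\epsilon(\xi_n))$ forces $\lambda_0\in\sigma(B_\epsilon(\xi_0))$ with $\mathrm{Re}\lambda_0=0$, which Lemma \ref{3dmvpbsp2} rules out.
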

\begin{proof}
The proof is similar to Lemma 3.15 in \cite{Li-4}, we omit the detail for brevity.
\end{proof}


Now we study the asymptotic expansions of the eigenvalues and eigenfunctions of $B_{\epsilon}(\xi)$ for $\epsilon|\xi|$ sufficiently small. Let $\tilde{B}_{\epsilon}(\eta)=B_{\epsilon}(\eta e_1)$ with $e_1=(1,0,0)$, we consider the following 1-D eigenvalue problem:
\bq\label{1msp1}
\(L-\mathrm{i}\epsilon v_1\eta-\mathrm{i}\epsilon\frac{v_1\eta}{1+\eta^2}P_d\)e=\beta e,\quad \eta\in\mathbb{R}.
\eq
We have the expansion of the eigenvalues $\beta_j(\eta,\epsilon)$ and the corresponding eigenfunctions $e_j(\eta,\epsilon)$ of $\tilde{B}_{\epsilon}(\eta)$ for $\epsilon|\eta|$  small as follows.

\begin{lem}\label{3dmvpbsp9}
(1) There exists a constant $r_0>0$ such that the spectrum $\sigma(\tilde{B}_{\epsilon}(\eta))\cap\{\lambda\in\mathbb{C}\,|\,\mathrm{Re}\lambda\geq-\frac{\mu}{2}\}$ consists of five points $\{\beta_j(\eta,\epsilon),\,j=-1,0,1,2,3\}$ for $\epsilon|\eta|\leq r_0$. The eigenvalues $\beta_j(\eta,\epsilon)$ and the corresponding eigenfunctions $e_j(\eta,\epsilon)$ are $C^{\infty}$ functions of $\eta$ and $\epsilon$. In particular, the eigenvalues $\beta_j(\eta,\epsilon)$, $j=-1,0,1,2,3$ admit the following asymptotic expansion:
\bq\label{sp6}
\beta_j(\eta,\epsilon)=-\mathrm{i}\epsilon\eta u_j(\eta)-\epsilon^2d_j(\eta)+O(\epsilon^3\eta^3),\quad \epsilon|\eta|\leq r_0,
\eq
where
\bq\label{sp4}
\left\{\bal
u_{\pm1}(\eta)=\mp\sqrt{\frac{5}{3}+\frac{1}{1+\eta^2}},\quad u_j(\eta)=0,~~ j=0,2,3,\\
d_{0}(\eta)=-\frac{\eta^2(3\eta^2+6)}{5\eta^2+8}(L^{-1}P_1(v_1\chi_4),v_1\chi_4),\\
d_{\pm1}(\eta)=-\frac{\eta^2}{2}(L^{-1}P_1(v_1\chi_1),v_1\chi_1)-\frac{\eta^2(\eta^2+1)}{5\eta^2+8}(L^{-1}P_1(v_1\chi_4),v_1\chi_4),\\
d_k(\eta)=-\eta^2(L^{-1}P_1(v_1\chi_2),v_1\chi_2),\quad k=2,3.
\ea\right.
\eq

(2) The corresponding eigenfunctions $e_j(\eta,\epsilon)=e_j(\eta,\epsilon,v)$, $j=-1,0,1,2,3$ satisfy
\bq\label{sp5}
\left\{\bal
(e_j,\overline{e_k})_{\eta}=(e_j,\overline{e_k})+\frac{1}{1+\eta^2}\(P_de_j,P_d\overline{e_k}\)=\delta_{jk},\quad -1\leq j,k\leq3,\\
e_j(\eta,\epsilon)=P_0e_j(\eta,\epsilon)+P_1e_j(\eta,\epsilon),\\
P_0e_j(\eta,\epsilon)=F_j(\eta)+O(\epsilon\eta),\\
P_1e_j(\eta,\epsilon)=\mathrm{i}\epsilon\eta L^{-1}P_1\(v_1F_j(\eta)\)+O(\epsilon^2\eta^2),
\ea\right.
\eq
where $F_j(\eta)\in N_0$, $j=-1,0,1,2,3$ are defined by
\bq
\left\{\bal
F_0(\eta)=\frac{\sqrt{2}(1+\eta^2)}{\sqrt{(\eta^2+2)(5\eta^2+8)}}\chi_0-\frac{\sqrt{3\eta^2+6}}{\sqrt{5\eta^2+8}}\chi_4,\\
F_{\pm1}(\eta)=\frac{\sqrt{3\eta^2+3}}{\sqrt{10\eta^2+16}}\chi_0\mp\frac{\sqrt{2}}{2}\chi_1+\frac{\sqrt{\eta^2+1}}{\sqrt{5\eta^2+8}}\chi_4,\\
F_k(\eta)=\chi_k,\quad k=2,3.
\ea\right.
\eq
\end{lem}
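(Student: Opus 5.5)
We follow the macro–micro Lyapunov–Schmidt reduction used for the VPB system in \cite{Li-1,Li-4}, adapted to the lack of scaling noted in the introduction. Write the eigenfunction as $e=g_0+g_1$ with $g_0=P_0e$, $g_1=P_1e$, and set $\beta=\epsilon\eta z$ so that the small parameter is $s=\epsilon\eta$. Projecting \eqref{1msp1} onto $N_0^\perp$ gives
$$(L-\beta-\mathrm{i}sP_1v_1P_1)g_1=\mathrm{i}sP_1v_1g_0 .$$
Since $(L-\mathrm{i}sP_1v_1P_1)|_{N_0^\perp}$ has real part $\le-\mu$, the operator on the left is invertible for $\mathrm{Re}\beta>-\mu/2$ and $|s|\le r_0$. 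Note that $P_d$ does not appear here, because $P_1P_d=0$. Hence
$$g_1=\mathrm{i}s(L-\beta-\mathrm{i}sP_1v_1P_1)^{-1}P_1v_1g_0 .$$
Projecting onto $N_0$ then gives the $5\times5$ macroscopic system
$$\beta g_0=-\mathrm{i}s P_0v_1(g_0+g_1)-\mathrm{i}s\tfrac{1}{1+\eta^2}P_0v_1P_dg_0 .$$

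In coordinates $g_0=\sum_j W_j\chi_j$, we substitute $g_1$ and divide by $s$. This produces $D(z,s,\eta)W=0$ with
$$D(z,s,\eta)=zI+\mathrm{i}A(\eta)-s\,R(s z,s),$$
where $A(\eta)$ is the symmetric-type matrix of $P_0v_1$, augmented by the Poisson coupling $\frac{1}{1+\eta^2}$ in the $(\chi_1,\chi_0)$ entry. The remainder is
$$R=\big(P_0v_1(L-sz-\mathrm{i}sP_1v_1P_1)^{-1}P_1v_1\chi_k,\chi_j\big),$$
and it is smooth in $(z,s)$. At $s=0$ the roots of $\det(zI+\mathrm{i}A(\eta))$ are $z=-\mathrm{i}u_j(\eta)$, which gives the values in \eqref{sp4}. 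Indeed, the acoustic speeds satisfy $u^2=\frac53+\frac1{1+\eta^2}$, and $0$ is a root of multiplicity three (entropy mode and two shear modes). The eigenvectors at $s=0$ are the $F_j(\eta)$, orthonormal for the $\eta$-weighted product. The two shear modes decouple by rotational symmetry in $v_2,v_3$, and each is a simple root of the corresponding reduced scalar equation. The acoustic and entropy roots are simple and uniformly separated for all $\eta\in\mathbb{R}$: their gap is at least $\sqrt{5/3}$.

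This is the main obstacle. The equation $\det D(z,s,\eta)=0$ must be solved by an implicit function theorem that is uniform in $\eta\in\mathbb{R}$, with $s=\epsilon\eta$ and $\eta$ treated as independent variables; this is the ``non-local'' version mentioned in the introduction. We check that $\partial_z\det D$ at $(z,s)=(-\mathrm{i}u_j(\eta),0)$ is bounded below uniformly in $\eta$. This holds because all coefficients of $A(\eta)$ depend on $\eta$ only through $\frac1{1+\eta^2}\in(0,1]$, a compact parameter range. The mapping is also $C^\infty$ with uniformly bounded derivatives for $|s|\le r_0$. A quantitative IFT, e.g.\ contraction on a disc of fixed radius, then gives $z_j(s,\eta)$ smooth with
$$z_j=-\mathrm{i}u_j(\eta)+s\,z_j^{(1)}(\eta)+O(s^2).$$
Computing $z_j^{(1)}$ by differentiating the equation at $s=0$ gives
$$z_j^{(1)}=\big(R(0,0)F_j,F_j\big)_\eta=\big(L^{-1}P_1(v_1F_j),v_1F_j\big).$$
Expanding $F_j$ in $\chi_0,\chi_1,\chi_4$, and using that $L^{-1}P_1(v_1\chi_0)=0$ since $v_1\chi_0\in N_0$, yields exactly the $d_j$ combinations in \eqref{sp4}, including $\frac{3\eta^2+6}{5\eta^2+8}$ for $d_0$. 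Setting $\beta_j=\epsilon\eta z_j$ gives \eqref{sp6}, with the $O(\epsilon^3\eta^3)$ error.

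Eigenfunctions come from the kernel vector $W_j(s,\eta)=F_j+O(s)$ together with the formula for $g_1$, expanded to first order in $s$, which gives \eqref{sp5}. Normalization uses the bilinear form $(e_j,\overline{e_k})_\eta$; this is the natural pairing because $\tilde B_\epsilon(\eta)^*=\tilde B_\epsilon(-\eta)$ and $\overline{e_j(\eta)}$ relates to $e_j(-\eta)$. Distinct eigenvalues give orthogonality. For the degenerate pair $j=2,3$, symmetry allows choosing $W_{2},W_3$ along $\chi_2,\chi_3$. Finally, to show these five are the only spectrum in $\{\mathrm{Re}\lambda\ge-\mu/2\}$, we combine Lemma \ref{3dmvpbsp6}(3) with the above reduction. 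Any such eigenvalue satisfies $|\beta|\le\delta$ and yields a nonzero $g_0$, hence a root of $\det D$ in a small neighborhood, and these roots are exactly the $z_j$ by the uniqueness part of the IFT (counting roots via Rouché uniformly in $\eta$).
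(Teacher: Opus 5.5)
Your proposal is correct and follows essentially the paper's route. Both arguments use the macro--micro reduction with $g_1$ given by the resolvent on $N_0^\perp$, the symmetry splitting into two scalar shear equations plus the $3\times3$ acoustic/entropy block, a contraction or implicit-function argument uniform in $\eta$, and first-order perturbation to read off $d_j$. Your observation that in the $\chi_j$ basis the reduced matrix depends on $\eta$ only through $s=\epsilon\eta$ and $\tau=\frac{1}{1+\eta^2}\in(0,1]$ gives the uniformity more cleanly than the paper's contraction for $\Pi_j$ on discs $|\sigma-u_j(\eta)|\le r_1|\eta|$.

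Two small fixes are needed:
\begin{itemize}
\item The $P_d$ term drops out of the micro equation because $P_1v_1P_d=0$ (since $v_1\chi_0=\chi_1\in N_0$), not because $P_1P_d=0$.
\item In the ``only five eigenvalues'' step, $|\beta|\le\delta$ does not bound $z=\beta/s$. Take the a priori bound $|z|\le\sup_\eta\|A(\eta)\|+Cs$ from the reduced equation instead, using $\|(L-\beta-\mathrm{i}sP_1v_1P_1)^{-1}P_1\|\le 2/\mu$ for $\mathrm{Re}\beta\ge-\mu/2$, before applying Rouch\'e.
\end{itemize}
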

\begin{proof}
Let $e$ be the eigenfunction of \eqref{1msp1} and $\beta=-\mathrm{i}\epsilon\eta\sigma$, we rewrite $e$ in the form $e=g_0+g_1$, where $g_0=P_0e$ and $g_1=P_1e$. The eigenvalue problem \eqref{1msp1} can be decomposed into
\bma
-\mathrm{i}\epsilon\eta\sigma g_0&=-\mathrm{i}\epsilon\eta P_0[v_1(g_0+g_1)]-\mathrm{i}\epsilon\frac{v_1\eta}{1+\eta^2}P_dg_0,\label{1msp2}\\
-\mathrm{i}\epsilon\eta\sigma g_1&=Lg_1-\mathrm{i}\epsilon \eta P_1[v_1(g_0+g_1)].\label{1msp3}
\ema
From \eqref{1msp3}, we obtain that for any $\mathrm{Re}(-\mathrm{i}\epsilon\eta\sigma)>-\frac{\mu}{2}$,
\bq\label{1msp4}
g_1=\mathrm{i}\epsilon \eta(L+\mathrm{i}\eta\epsilon\sigma -\mathrm{i}\epsilon \eta P_1v_1P_1)^{-1}(P_1v_1g_0).
\eq
Substituting \eqref{1msp4} into \eqref{1msp2}, we have
\bq\label{1msp5}
\sigma g_0= P_0v_1g_0+\frac{v_1 }{1+\eta^2}P_dg_0+ \mathrm{i}\epsilon \eta P_0\(v_1R(\sigma,\epsilon\eta)P_1v_1g_0\),
\eq
where
$$
R(\sigma, \epsilon\eta)=(L+\mathrm{i}\epsilon\eta\sigma -\mathrm{i} \epsilon\eta P_1v_1P_1)^{-1}.
$$

Define the operator $A(\eta)=P_0v_1P_0+\frac{v_1}{1+\eta^2}P_d$. We have the matrix representation of $A(\eta)$ as
\bq
A(\eta)=\left(
  \begin{array}{ccccc}
    0 & 1 & 0 & 0 & 0 \\
   1+\frac{1}{1+\eta^2} & 0 & 0 & 0 & \sqrt{\frac{2}{3}} \\
    0 & 0 & 0 & 0 & 0 \\
    0 & 0 & 0 & 0 & 0 \\
    0 & \sqrt{\frac{2}{3}} & 0 & 0 & 0 \\
  \end{array}
\right).
\eq
It can be verified that the eigenvalues $u_j(\eta)$ and  eigenvectors $F_j(\eta)$ of $A(\eta)$ are given by
\bq\label{dlEUJ}
\left\{\bal
u_{\pm1}(\eta)=\mp\sqrt{\frac{5}{3}+\frac{1}{1+\eta^2}},\quad u_j(\eta)=0, \quad j=0,2,3,\\
F_{\pm1}(\eta)=\frac{\sqrt{3\eta^2+3}}{\sqrt{10\eta^2+16}}\chi_0\mp\frac{\sqrt{2}}{2}\chi_1+\frac{\sqrt{\eta^2+1}}{\sqrt{5\eta^2+8}}\chi_4,\\
F_0(\eta)=\frac{\sqrt{2}(1+\eta^2)}{\sqrt{(\eta^2+2)(5\eta^2+8)}}\chi_0-\frac{\sqrt{3\eta^2+6}}{\sqrt{5\eta^2+8}}\chi_4,\\
F_k(\eta)=\chi_k,\quad k=2,3,\\
(F_j(\eta),F_k(\eta))_{\eta}=\delta_{jk},\quad -1\leq j,k\leq3.
\ea\right.
\eq

We will  reduce \eqref{1msp5} to a problem of 5-dimensional  system. Since $g_0\in N_0$, we can represent $g_0$ as
$$
g_0=\sum^{4}_{j=0}C_jF_{j-1}(\eta)\quad \mathrm{with}\quad C_j=(g_0,F_{j-1}(\eta))_{\eta},\quad j=0,1,2,3,4.
$$

Taking the inner product $(\cdot,\cdot)_{\eta}$ between \eqref{1msp5} and $F_j(\eta)$~$(j=-1,0,1,2,3)$ respectively, we obtain the equations about $\sigma$ and $(C_0,C_1,C_2,C_3,C_4)$:
\be
\sigma C_j =u_{j-1}(\eta)C_j +\mathrm{i}\epsilon\eta \sum^4_{k=0}C_k R_{kj}(\sigma,\eta,\epsilon), \quad j=0,1,2,3,4,\label{1msp5j}
\ee
where
\bq\label{1msp7}
R_{jk}(\sigma,\eta,\epsilon)=\((L+\mathrm{i}\epsilon\eta\sigma-\mathrm{i} \epsilon\eta P_1v_1P_1)^{-1}P_1v_1F_{j-1}(\eta),v_1F_{k-1}(\eta)\).
\eq
In particular, $R_{jk}(\sigma,\eta,\epsilon)$, $j,k=0,1,2,3,4$ satisfy
\bq\label{RJJ}
\left\{\bln
&R_{kl}(\sigma,\eta,\epsilon)=R_{lk}(\sigma,\eta,\epsilon)=0,\quad k=0,1,2,\quad l=3,4,\\
&R_{34}(\sigma,\eta,\epsilon)=R_{43}(\sigma,\eta,\epsilon)=0,\\
&R_{33}(\sigma,\eta,\epsilon)=R_{44}(\sigma,\eta,\epsilon).
\eln\right.
\eq

Since $F_2,F_3$ are independent of $\eta$, we can rewrite $R_{kk}(\sigma,\eta,\epsilon)$ as $R_{kk}(\sigma,\epsilon\eta)$ for $k=3,4$. Thus, by \eqref{1msp7} and \eqref{RJJ}, we can divide \eqref{1msp5j} into the following two systems:
\bma
\sigma C_j&=u_{j-1}(\eta)C_j+\mathrm{i}\epsilon\eta\sum^{2}_{i=0}C_iR_{ij}(\sigma,\eta,\epsilon),\quad j=0,1,2,\label{1msp5j1}\\
\sigma C_k&=\mathrm{i}\epsilon\eta C_kR_{33}(\sigma,\epsilon\eta),\quad k=3,4.\label{1msp5j2}
\ema

Denote
\bma
D_0(\sigma,z)&=\sigma-\mathrm{i}z R_{33}(\sigma,z),\label{1msp9}\\
D_1(\sigma,\eta,\epsilon)&=
 \left|
\begin{array}{ccc}
\sigma-u_{-1}-\mathrm{i}\epsilon\eta R_{00}&-\mathrm{i}\epsilon\eta R_{10}&-\mathrm{i}\epsilon\eta R_{20}\\
-\mathrm{i}\epsilon\eta R_{01}&\sigma-u_0-\mathrm{i}\epsilon\eta R_{11}&-\mathrm{i}\epsilon\eta R_{21}\\
-\mathrm{i}\epsilon\eta R_{02}&-\mathrm{i}\epsilon\eta R_{12}&\sigma-u_1-\mathrm{i}\epsilon\eta R_{22}\\
\end{array}
\right|,\label{1msp10}
\ema
where $z=\epsilon\eta$ and $R_{jk}=R_{jk}(\sigma,\eta,\epsilon)$ $(j,k=0,1,2)$ is defined by \eqref{1msp7}.


We have the following result about the solutions of $D_0(\sigma, z)=0$ and $D_1(\sigma, \eta,\epsilon)=0$.

\begin{lem}\label{3dmvpbsp7}

(1) There are two constants $r_0,~r_1>0$ such that the equation $D_0(\sigma, z)=0$ has a unique  solution $\sigma=\sigma(z)$ for $(z,\sigma)\in[-r_0,r_0]\times B_{r_1}(0)$, which is a $C^{\infty}$ function of $z$ satisfying
\bq
\sigma(0)=0,\quad  \partial_{z}\sigma(0)=\mathrm{i}(L^{-1}P_1(v_1\chi_2),v_1\chi_2).
\eq

(2) There are two small constants $r_0,~r_1>0$ such that the equation $D_1(\sigma,\eta,\epsilon)=0$ has exactly three solutions $\sigma_j=\sigma_j(\eta,\epsilon)$, $j=-1,0,1$ for $\epsilon|\eta|\leq r_0$ and $|\sigma_j-u_j(\eta)|\leq r_1|\eta|$. They are $C^{\infty}$ functions of $\eta$ and $\epsilon$, which satisfies
\bq\label{sp1}
\sigma_j(\eta,0)=u_j(\eta),\quad \partial_{\epsilon}\sigma_j(\eta,0)=-b_j(\eta),
\eq
where $u_j,b_j$, $j=-1,0,1$ are defined by \eqref{sp4}.

In particular, $\sigma_j(\eta,\epsilon)$, $j=-1,0,1$ satisfy the following expansions
\bq\label{sp3}
\sigma_j(\eta,\epsilon)=u_j(\eta)-\epsilon b_j(\eta)+O(\epsilon^2\eta^2).
\eq
\end{lem}
\begin{proof}
By direct computation and the implicit function theorem, we can prove (1).

Now, we want to show (2). By \eqref{1msp10}, we have
\bmas
D_1(\sigma,\eta,\epsilon)
&=(\sigma-u_{-1})(\sigma-u_{0})(\sigma-u_{1})-\mathrm{i}\eps \eta \sum_{jkl,l\ge k} R_{jj} (\sigma-u_{k-1}) (\sigma-u_{l-1})
\\
&\quad-\eps^2\eta^2 \sum_{jkl,j\ge k}(R_{jj}R_{kk}-R_{jk}R_{kj})(\sigma-u_{l-1})+\mathrm{i}\eps^3\eta^3 \sum_{jkl}(-1)^{\tau(jkl)}R_{0j}R_{1k}R_{2l},
\emas
where  $jkl$ denotes the arrange of $j,k,l=0,1,2$, $\sum_{jkl}$ denotes the summation of all arrange $jkl$, $\tau(jkl)$ denotes the inversion number of $jkl$.
It follows that
\bq\label{1msp10j}
D_1(\sigma,\eta,0)=(\sigma-u_{-1})(\sigma-u_{0})(\sigma-u_{1}).
\eq
This implies that $D_1(\sigma,\eta,0)=0$ has three roots $u_j(\eta)$ for $j=-1,0,1$.  Moreover, $D_1(\sigma,\eta,\epsilon)$ is $C^{\infty}$ with respect to $(\sigma,\eta,\epsilon)$ and satisfies
\bma
\partial_{\epsilon}D_1(\sigma,\eta,\epsilon)&=-\mathrm{i} \eta \sum_{jkl,l\ge k} (R_{jj}+\eps\pt_{\eps}R_{jj}) (\sigma-u_{k-1}) (\sigma-u_{l-1})\nnm\\
&\quad-\eps \eta^2 \sum_{jkl,j\ge k}(2R_{jj}R_{kk}+\eps \pt_{\eps}(R_{jj}R_{kk})-R_{jk}R_{kj}-\eps \pt_{\eps}(R_{jk}R_{kj}))(\sigma-u_{l-1}) \nnm\\
&\quad+\mathrm{i}\epsilon^2\eta^3\sum_{jkl}(-1)^{\tau(jkl)}(3R_{1j}R_{2k}R_{3l}+\eps\pt_{\eps} (R_{1j}R_{2k}R_{3l})),\label{1msp11}
\\
\partial_{\sigma}D_1(\sigma,\eta,\epsilon)&=3\sigma^2-\frac{5}{3}-\frac{1}{1+\eta^2}-\mathrm{i}\eps \eta \sum_{jkl,l\ge k} \pt_{\sigma}R_{jj} (\sigma-u_{k-1}) (\sigma-u_{l-1})\nnm\\
&\quad-\mathrm{i}\eps \eta \sum_{jkl,l\ge k}R_{jj}(2\sigma-u_{k-1}-u_{l-1})-\eps^2\eta^2 \sum_{jkl,j\ge k}\pt_{\sigma}(R_{jj}R_{kk}-R_{jk}R_{kj})(\sigma-u_{l-1})\nnm\\
&\quad-\eps^2\eta^2 \sum_{j\ge k}(R_{jj}R_{kk}-R_{jk}R_{kj}) +\mathrm{i}\eps^3\eta^3 \sum_{jkl}(-1)^{\tau(jkl)}\pt_{\sigma}(R_{0j}R_{1k}R_{2l}).\label{1msp12}
\ema

For $j=-1,0,1$, we define
$$
\Pi_j(\sigma,\eta,\epsilon)=\sigma-\(3u_j(\eta)^2-\frac{5}{3}-\frac{1}{1+\eta^2}\)^{-1}D_1(\sigma,\eta,\epsilon).
$$
It is straightforward to verify that a solution of $D_1(\sigma,\eta,\epsilon)=0$ for any fixed $\eta$ and $\epsilon$ is a fixed point of $\Pi_j(\sigma,\eta,\epsilon)$.

Since
\bq\label{spRR}
|\partial_{\sigma}R_{jk}(\sigma,\eta,\epsilon)|\leq C|\eta|\epsilon,\quad |\partial_{\epsilon}R_{jk}(\sigma,\eta,\epsilon)|\leq C|\eta|(|\sigma|+1),\quad j,k=0,1,2,
\eq
it follows from \eqref{1msp11} and \eqref{1msp12} that
\bma
&|\partial_{\sigma}\Pi_j(\sigma,\eta,\epsilon)|=\bigg|1-\(3u_j(\eta)^2-\frac{5}{3}-\frac{1}{1+\eta^2}\)^{-1}\partial_{\sigma}D_1(\sigma,\eta,\epsilon)\bigg|\leq Cr_1,\nonumber\\
&|\partial_{\epsilon}\Pi_j(\sigma,\eta,\epsilon)|=\bigg|\(3u_j(\eta)^2-\frac{5}{3}-\frac{1}{1+\eta^2}\)^{-1}\partial_{\epsilon}D_1(\sigma,\eta,\epsilon)\bigg|\leq C|\eta|,\nonumber
\ema
for $|\sigma-u_j(\eta)|\leq r_1|\eta|$ and $\epsilon|\eta|\leq r_0$ with $r_0,r_1>0$ sufficiently small. This implies that for $|\sigma-u_j(\eta)|\leq r_1|\eta|$ and $\epsilon|\eta|\leq r_0$ with $r_0,r_1\ll1$,
\bma
|\Pi_j(\sigma,\eta,\epsilon)-u_j(\eta)|&=|\Pi_j(\sigma,\eta,\epsilon)-\Pi_j(u_j(\eta),\eta,0)|\nonumber\\
&\leq |\Pi_j(\sigma,\eta,\epsilon)-\Pi_j(\sigma,\eta,0)|+|\Pi_j(\sigma,\eta,0)-\Pi_j(u_j(\eta),\eta,0)|\nonumber\\
&\leq |\partial_{\epsilon}\Pi_j(\sigma,\eta,\tilde{\epsilon})||\epsilon|+|\partial_{\sigma}\Pi_j(\tilde{\sigma},\eta,0)||\sigma-u_j(\eta)|\leq r_1|\eta|,\nonumber\\
|\Pi_j(\sigma_1,\eta,\epsilon)-\Pi_j(\sigma_2,\eta,\epsilon)|&\leq |\partial_{\sigma}\Pi_j(\bar{\sigma},\eta,\epsilon)||\sigma_1-\sigma_2|\leq \frac{1}{2}|\sigma_1-\sigma_2|,\nonumber
\ema
where $\tilde{\epsilon}$ is between $0$ and $\epsilon$, $\tilde{\sigma}$ is between $\sigma$ and $u_{j}(\eta)$, and $\bar{\sigma}$ is between $\sigma_1$ and $\sigma_2$.

Hence by the contraction mapping theorem, there exist exactly three functions $\sigma_j(\eta,\epsilon)$, $j=-1,0,1$ for $\epsilon|\eta|\leq r_0$ and $|\sigma_j-u_j(\eta)|\leq r_1|\eta|$ such that
$\Pi_j(\sigma_j(\eta,\epsilon),\eta,\epsilon)=\sigma_j(\eta,\epsilon)$ and $\sigma_j(\eta,0)=u_j(\eta)$. This is equivalent to that $D_1(\sigma_j(\eta,\epsilon),\eta,\epsilon)=0$.
Moreover, by \eqref{1msp11}-\eqref{1msp12} we have
\bma
\partial_{\epsilon}\sigma_0(\eta,0)&=-\frac{\partial_{\epsilon}D_1(0,\eta,0)}{\partial_{\sigma}D_1(0,\eta,0)}=\frac{\mathrm{i}\eta(3\eta^2+6)}{(5\eta^2+8)}(L^{-1}P_1v_1\chi_4,v_1\chi_4),\label{1msp13}\\
\partial_{\epsilon}\sigma_{\pm1}(\eta,0)&=-\frac{\partial_{\epsilon}D_1(u_{\pm1}(\eta),\eta,0)}{\partial_{\sigma}D_1(u_{\pm1}(\eta),\eta,0)}\nonumber\\
&=\frac{\mathrm{i}\eta}{2}(L^{-1}P_1v_1\chi_1,v_1\chi_1)+\frac{\mathrm{i}\eta(\eta^2+1)}{5\eta^2+8}(L^{-1}P_1v_1\chi_4,v_1\chi_4).\label{1msp15}
\ema
Combining \eqref{1msp10j}, \eqref{1msp13} and \eqref{1msp15}, we obtain \eqref{sp1}.

Finally, we deal with \eqref{sp3}. Since
\bmas
|\sigma_j(\eta,\epsilon)-u_j(\eta)|&\leq|\Pi_j(\sigma_j(\eta,\epsilon),\eta,\epsilon)-\Pi_j(u_j(\eta),\eta,\epsilon)|\nnm\\
&\quad+|\Pi_j(u_j(\eta),\eta,\epsilon)-\Pi_j(u_j(\eta),\eta,0)|\nnm\\
&\leq|\partial_{\sigma}\Pi_j(\tilde{\sigma},\eta,\epsilon)||\sigma_j(\eta,\epsilon)-u_j(\eta)|+|\partial_{\epsilon}\Pi_j(u_j(\eta),\eta,\tilde{\epsilon})|\epsilon,
\emas
it follows that
$$
|\sigma_j(\eta,\epsilon)-u_j(\eta)|\leq(1-|\partial_{\sigma}\Pi_j(\tilde{\sigma},\eta,\epsilon)|)^{-1}|\partial_{\epsilon}\Pi_j(u_j(\eta),\eta,\tilde{\epsilon})|\epsilon\leq C\epsilon|\eta|,
$$
where $\tilde{\epsilon}$ is between $0$ and $\epsilon$, $\tilde{\sigma}$ is between $\sigma_{j}(\eta,\epsilon)$ and $u_{j}(\eta)$.

By \eqref{1msp11}-\eqref{spRR}, we obtain that for $|\sigma_j(\eta,\epsilon)-u_{j}(\eta)|\leq C\epsilon|\eta|$ and $\epsilon|\eta|\leq r_0$,
\bmas
\partial_{\epsilon}D_1(\sigma_j,\eta,\epsilon)&=-\i\eta R_{00}(u_j-u_0)(u_j-u_{1})-\i\eta R_{11}(u_j-u_{-1})(u_j-u_{1})\\
&\quad-\i\eta R_{22}(u_j-u_{-1})(u_j-u_{0})+O(1)\epsilon|\eta|^2,\\
\partial_{\sigma}D_1(\sigma_j,\eta,\epsilon)&=(u_j-u_0)(u_j-u_{1})+(u_j-u_{-1})(u_j-u_{1})\\
&\quad+(u_j-u_{-1})(u_j-u_{0})+O(1)\epsilon|\eta|,
\emas
which gives
\bmas
&|\partial_{\sigma}\Pi_j(\sigma_j,\eta,\epsilon)|=\bigg|1-\(3u_j(\eta)^2-\frac{5}{3}-\frac{1}{1+\eta^2}\)^{-1}\partial_{\sigma}D_1(\sigma_j,\eta,\epsilon)\bigg|=O(1)\epsilon|\eta|,\\
&|\partial_{\epsilon}\Pi_j(\sigma_j,\eta,\epsilon)|=\bigg|\(3u_j(\eta)^2-\frac{5}{3}-\frac{1}{1+\eta^2}\)^{-1}\partial_{\epsilon}D_1(\sigma_j,\eta,\epsilon)\bigg|=b_j(\eta)+O(1)\epsilon|\eta|^2.
\emas
Thus,
\bmas
&|\sigma_j(\eta,\epsilon)-u_j(\eta)-\epsilon b_j(\eta)|\\
=&\,|\Pi_j(\sigma_j(\eta,\epsilon),\eta,\epsilon)-\Pi_j(u_j(\eta),\eta,\epsilon)|\\
&+|\Pi_j(u_j(\eta),\eta,\epsilon)-\Pi_j(u_j(\eta),\eta,0)-\epsilon b_j(\eta)|\\
\leq&\,|\partial_{\sigma}\Pi_j(\tilde{\sigma},\eta,\epsilon)||\sigma_j-u_j|+|\partial_{\epsilon}\Pi_j(\sigma_j,\eta,\tilde{\epsilon})-b_j(\eta)|\epsilon=O(1)\epsilon^2|\eta|^2.
\emas
This proved \eqref{sp3}.
\end{proof}

The eigenvalues $\beta_j(\eta,\epsilon)$ and the corresponding eigenfunctions $e_j(\eta,\epsilon)$ $(j=-1,0,1,2,3)$ of $\tilde{B}_{\epsilon}(\eta)$ can be constructed as follows. For $j=2,3,$ we take $\beta_j(\eta,\epsilon)=-\mathrm{i}\epsilon\eta\sigma(\epsilon\eta)$ with $\sigma(z)$ being a solution of $D_0(\sigma, z)=0$ defined in Lemma \ref{3dmvpbsp7}. We denote by $ C_2(\epsilon\eta) $ as a solution of system \eqref{1msp5j2} for $\sigma=\sigma(\epsilon\eta)$. The corresponding eigenfunctions $e_j(\eta,\epsilon)$, $j=2,3$ are defined by
\bq\label{1msp16}
e_j(\eta,\epsilon)=C_2(\epsilon\eta)F_j+\mathrm{i}C_2(\epsilon\eta)\epsilon\eta(L+\mathrm{i}\epsilon\eta\sigma_j-\mathrm{i}\epsilon\eta P_1v_1P_1)^{-1}P_1(v_1F_j),
\eq
which are orthonormal, i.e., $ (e_2(\eta,\epsilon),\overline{e_3(\eta,\epsilon)} )_{\eta}=0$.

For $j=-1,0,1$, we choose $\beta_j(\eta,\epsilon)=-\mathrm{i}\eta\epsilon\sigma_j(\eta,\epsilon)$ with $\sigma_j(\eta,\epsilon)$ being a solution of $D_1(\sigma,\eta,\epsilon)=0$ defined in Lemma \ref{3dmvpbsp7}. We denote by $ \{C_0^j(\eta,\epsilon),C_1^j(\eta,\epsilon),C_2^j(\eta,\epsilon) \}$ as a solution of system \eqref{1msp5j1} for $\sigma=\sigma_j(\eta,\epsilon)$. Then we can construct $e_j(\eta,\epsilon)$, $j=-1,0,1$ as
\bq\label{1msp16j}
\left\{\bal
e_j(\eta,\epsilon)=P_0e_j(\eta,\epsilon)+P_1e_j(\eta,\epsilon),\\
P_0e_j(\eta,\epsilon)=C_0^j(\eta,\epsilon)F_{-1}(\eta)+C_1^j(\eta,\epsilon)F_0(\eta)+C_2^j(\eta,\epsilon)F_1(\eta),\\
P_1e_j(\eta,\epsilon)=\mathrm{i}\epsilon\eta(L+\mathrm{i}\eta\epsilon\sigma_j-\mathrm{i}\eta\epsilon P_1v_1P_1)^{-1}P_1(v_1P_0e_j(\eta,\epsilon)).
\ea\right.
\eq
We write
\bq
\(L-\mathrm{i}\eta\epsilon v_1-\mathrm{i}\epsilon\frac{v_1\eta}{1+\eta^2}P_d\)e_j(\eta,\epsilon)=\beta_j(\eta,\epsilon)e_j(\eta,\epsilon),\quad j=-1,0,1,2,3.\label{dl-mvpbsp-1-1}
\eq
By taking inner product $(\cdot,\cdot)_{\eta}$ between \eqref{dl-mvpbsp-1-1} and $\overline{e_k(\eta,\epsilon)}$, and using the fact that
\bma
&\(L+\mathrm{i}\eta\epsilon v_1+\mathrm{i}\epsilon\frac{v_1\eta}{1+\eta^2}P_d\)\overline{e_j(\eta,\epsilon)}=\overline{\beta_j(\eta,\epsilon)}\overline{e_j(\eta,\epsilon)},\nonumber\\
&(\tilde{B}_{\epsilon}(\eta)f,g)_{\eta}=(f,\tilde{B}_{\epsilon}(-\eta)g)_{\eta},\quad \forall f,g\in D(\tilde{B}_{\epsilon}(\eta)),\nonumber
\ema
we have
$$
(\beta_j(\eta,\epsilon)-\beta_k(\eta,\epsilon))(e_j(\eta,\epsilon),\overline{e_k(\eta,\epsilon)})_\eta=0,\quad -1\leq j\neq k\leq3.
$$
For $\epsilon|\eta|>0$ sufficiently small, $\beta_j(\eta,\epsilon)\neq\beta_k(\eta,\epsilon)$ for $-1\leq j\neq k\leq3$. Therefore, we have the orthogonality relation
$$
(e_j(\eta,\epsilon),\overline{e_k(\eta,\epsilon)})_\eta=0,\quad -1\leq j\neq k\leq3.
$$
We also normalized this eigenfunctions by
$$(e_j(\eta,\epsilon),\overline{e_j(\eta,\epsilon)})_\eta=1, \quad -1\leq j\leq3.$$

Let $z=\epsilon\eta$. The coefficient $C_2(z)$  in \eqref{1msp16} is determined by the normalization condition as
\bq\label{1msp17}
C_2(z)^2\big(1+z^2D_2(z)\big)=1,
\eq
with $D_2(z)=\big(R(\sigma,z)P_1(v_1F_2),\overline{R(\sigma,z)}P_1(v_1F_2)\big)$. It follows from \eqref{1msp17} that
$$
C_2(z)=1+O(z^2),\quad |z|\leq r_0,
$$
which together with \eqref{1msp16} leads to \eqref{sp5} for $j=2,3$.

To obtain the expansion of $e_j(\eta,\epsilon)$ for $j=-1,0,1$ defined in \eqref{1msp16j}, we consider its macroscopic part and microscopic part respectively. By \eqref{1msp5j}, the macroscopic part $P_0e_j(\eta,\epsilon)$ is determined in terms of the coefficients $ \{C_0^j(\eta,\epsilon),C_1^j(\eta,\epsilon),C_2^j(\eta,\epsilon) \}$ $(j=-1,0,1)$ that satisfy
\bq\label{1msp18}
\sigma_j(\eta,\epsilon) C_k^j(\eta,\epsilon)=u_{j-1}(\eta)C_k^j(\eta,\epsilon)+\i\epsilon\eta\sum^{2}_{l=0}C_l^j(\eta,\epsilon)R_{lk}(\sigma,\epsilon\eta),\quad k=0,1,2.
\eq
Furthermore, we have the normalization conditions:
\bq\label{1msp19}
1\equiv(e_j(\eta,\epsilon),\overline{e_j(\eta,\epsilon)})_{\eta}=C_0^j(\eta,\epsilon)^2+C_1^j(\eta,\epsilon)^2+C_2^j(\eta,\epsilon)^2+O(\epsilon^2\eta^2),\quad \epsilon|\eta|\leq r_0.
\eq

By \eqref{sp3} and \eqref{1msp18}, we can expand $C_0^j(\eta,\epsilon),C_1^j(\eta,\epsilon),C_2^j(\eta,\epsilon)$ as
$$
C_0^j(\eta,\epsilon)=C_{0,0}^j(\eta)+O(\epsilon\eta),\quad C_1^j(\eta,\epsilon)=C_{1,0}^j(\eta)+O(\epsilon\eta),\quad C_2^j(\eta,\epsilon)=C_{2,0}^j(\eta)+O(\epsilon\eta).
$$
Substituting above expansions and \eqref{sp3} into \eqref{1msp18} and \eqref{1msp19}, we have
\bq\label{1msp20}
\left\{\bal
u_j(\eta)C_{k,0}^j(\eta)=u_{k-1}(\eta)C^j_{k,0}(\eta),\\
C^j_{0,0}(\eta)^2+C^j_{1,0}(\eta)^2+C^j_{2,0}(\eta)^2=1,
\ea\right.
\eq
where $j=-1,0,1$, $k=0,1,2$. By straightforward computation, we can obtain from \eqref{1msp20} that
\bq\label{1msp-21}
C^j_{j+1,0}(\eta)=1,\quad C^j_{k,0}(\eta)=0,\quad k\neq j+1.
\eq
By \eqref{1msp16j} and \eqref{1msp-21}, we obtain the expansion of $e_j(\eta,\epsilon)$~$(j=-1,0,1)$. The proof is then completed.
\end{proof}

We now consider the following 3-D eigenvalue problem:
\bq\label{1msp21}
B_{\epsilon}(\xi)\psi=\(L-\mathrm{i}\epsilon(v\cdot\xi)-\mathrm{i}\epsilon\frac{v\cdot\xi}{1+|\xi|^2}P_d\)\psi=\lambda\psi,\quad \xi\in\mathbb{R}^3.
\eq


With the help of Lemma \ref{3dmvpbsp9}, we have the expansion of the eigenvalues $\lambda(|\xi|,\epsilon)$ and the corresponding eigenfunctions $\psi_j(\xi,\epsilon)$ of $B_{\epsilon}(\xi)$ for $\epsilon|\xi|\leq r_0$ as follows.
\begin{lem}\label{3dmvpbsp10}
(1) There exists a constant $r_0>0$ such that the spectrum $\sigma(B_{\epsilon}(\xi))\cap\{\lambda\in\mathbb{C}|\mathrm{Re}\lambda\geq-\frac{\mu}{2}\}$ consists of five points $\{\lambda_j(|\xi|,\epsilon),j=-1,0,1,2,3\}$ for $\epsilon|\xi|\leq r_0$. The eigenvalues $\lambda_j(|\xi|,\epsilon)$, $j=-1,0,1,2,3$ are $C^{\infty}$ functions of $(|\xi|,\epsilon)$, and satisfy the following expansions for $\epsilon|\xi|\leq r_0$:
\bq
\lambda_j(|\xi|,\epsilon)=-\mathrm{i}\epsilon |\xi|u_j(|\xi|)-\epsilon^2 d_j(|\xi|)+O(\epsilon^3|\xi|^3),\quad \epsilon|\xi|\leq r_0,
\eq
where $u_j(|\xi|)$ and $d_j(|\xi|)$ are defined by  \eqref{sp4}.

(2) The eigenfunctions $\psi_j(\xi,\epsilon)=\psi_j(\xi,\epsilon,v)$, $j=-1,0,1,2,3$ satisfy
\bq\label{egf1}
\left\{\bal
\big(\psi_j,\overline{\psi_k}\big)_{\xi}=\big(\psi_j,\overline{\psi_k}\big)+\frac{1}{1+|\xi|^2}\big(P_d\psi_j,P_d\overline{\psi_k}\big)=\delta_{jk},\quad -1\leq j,k\leq3,\\
\psi_j(\xi,\epsilon)=P_0\psi_j(\xi,\epsilon)+P_1\psi_j(\xi,\epsilon),\\
P_0\psi_j(\xi,\epsilon)=E_j(\xi)+O(\epsilon|\xi|),\\
P_1\psi_j(\xi,\epsilon)=\mathrm{i}\epsilon L^{-1}P_1((v\cdot\xi) E_j(\xi))+O(\epsilon^2|\xi|^2),
\ea\right.
\eq
where $E_j(\xi)\in N_0$ are defined by
\bq\label{egf1-1}
\left\{\bal
E_0(\xi)=\frac{\sqrt{2}(1+|\xi|^2)}{\sqrt{(|\xi|^2+2)(5|\xi|^2+8)}}\chi_0-\frac{\sqrt{3|\xi|^2+6}}{\sqrt{5|\xi|^2+8}}\chi_4,\\ E_{\pm1}(\xi)=\frac{\sqrt{3|\xi|^2+3}}{\sqrt{10|\xi|^2+16}}\chi_0\mp\frac{\sqrt{2}(v\cdot\xi)}{2|\xi|}\chi_0+\frac{\sqrt{|\xi|^2+1}}{\sqrt{5|\xi|^2+8}}\chi_4,\\
E_k(\xi)=v\cdot W^{k}\chi_0,\quad k=2,3,
\ea\right.
\eq
and $W^k$ $(k=2,3)$ are orthonormal vectors satisfying $W^k\cdot\xi=0$.
\end{lem}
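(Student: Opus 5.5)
The plan is to transfer Lemma \ref{3dmvpbsp9} from the one-dimensional operator $\tilde{B}_{\epsilon}(\eta)$ to $B_{\epsilon}(\xi)$ by rotation invariance. For $\xi\neq0$ I choose a rotation $\mathbb{O}=\mathbb{O}(\xi)\in SO(3)$ with $\mathbb{O}^{T}\xi=|\xi|e_1$, and define the unitary operator $(Uf)(v)=f(\mathbb{O}^Tv)$ on $L^2(\R^3_v)$.

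The first step is to check the commutation relations. $L$ commutes with $U$, since the collision kernel depends only on $|v-v_*|$ and $\cos\theta$, and $M$ is radial. The projection $P_d$ commutes with $U$ because $\chi_0$ is radial, and so does $P_0$. Moreover $U^{-1}(v\cdot\xi)U=|\xi|v_1$, since $(\mathbb{O}v)\cdot\xi=v\cdot\mathbb{O}^T\xi=|\xi|v_1$. It follows that
\[
B_{\epsilon}(\xi)=U\tilde{B}_{\epsilon}(|\xi|)U^{-1}.
\]
The weighted inner product satisfies $(Uf,Ug)_{\xi}=(f,g)_{|\xi|}$, because $P_d$ commutes with $U$ and $1/(1+|\xi|^2)$ depends only on $|\xi|$. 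Consequently the two operators have the same spectrum, and the eigenfunctions transform by $U$. Part (1) then follows from Lemma \ref{3dmvpbsp9}(1) with $\eta=|\xi|$ and $\lambda_j(|\xi|,\epsilon)=\beta_j(|\xi|,\epsilon)$, including the localization in $\mathrm{Re}\lambda\ge-\mu/2$ and the smoothness in $(|\xi|,\epsilon)$. The case $\xi=0$ is handled directly: there $B_\epsilon(0)=L$ and all five eigenvalues vanish, consistent with the expansion.

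The second step is to define $\psi_j(\xi,\epsilon)=Ue_j(|\xi|,\epsilon)$ for $j=-1,0,1$, and to compute $UF_j$. Since $U\chi_0=\chi_0$, $U\chi_4=\chi_4$, and $U\chi_1=(\mathbb{O}^Tv)_1\chi_0=(v\cdot\xi/|\xi|)\chi_0$, this yields $UF_j(|\xi|)=E_j(\xi)$ for $j=0,\pm1$. For the microscopic part,
\[
U L^{-1}P_1(v_1F_j)=L^{-1}P_1\big((v\cdot\xi/|\xi|)E_j\big),
\]
so multiplying by $\mathrm{i}\epsilon|\xi|$ gives $\mathrm{i}\epsilon L^{-1}P_1((v\cdot\xi)E_j(\xi))$. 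The remainder terms $O(\epsilon|\xi|)$ and $O(\epsilon^2|\xi|^2)$ are preserved because $U$ is unitary. The bilinear normalization $(\psi_j,\overline{\psi_k})_{\xi}=\delta_{jk}$ also carries over, since $U$ commutes with complex conjugation, giving $\overline{Uf}=U\overline{f}$.

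The remaining subtle point, where I expect the main obstacle, is the transverse modes $j=2,3$. The operator $\tilde B_\epsilon$ has $F_2=\chi_2$ and $F_3=\chi_3$, whereas the statement requires $E_k=v\cdot W^k\chi_0$ with $W^k$ an arbitrary orthonormal pair perpendicular to $\xi$. I would use the residual freedom in the rotation. The eigenvalue $\beta_2=\beta_3$ is double, with eigenspace spanned by $e_2,e_3$. Rotations fixing $e_1$ commute with $\tilde B_\epsilon(\eta)$ and rotate $(e_2,e_3)$ as $(\chi_2,\chi_3)$. I will therefore choose $\mathbb{O}$ with second and third columns $W^2$ and $W^3$, so that $U\chi_k=(v\cdot W^k)\chi_0$, and set $\psi_k=Ue_k$ for $k=2,3$. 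With this choice the expansion in \eqref{egf1} for $k=2,3$ follows from \eqref{1msp16} and $C_2=1+O(z^2)$.

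Finally, I would verify that the eigenvalue is labeled as a function of $|\xi|$ alone. This holds because the five-point spectrum obtained in Lemma \ref{3dmvpbsp9} depends only on $\eta=|\xi|$, and the identification with $\beta_j$ does not depend on which admissible rotation is chosen.
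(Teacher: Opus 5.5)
Your proposal is correct and follows the paper's own route. Like the paper, you conjugate $B_{\epsilon}(\xi)$ to $\tilde{B}_{\epsilon}(|\xi|)$ by a rotation in the velocity variable and transport $\beta_j(|\xi|,\epsilon)$ and $e_j$ from Lemma \ref{3dmvpbsp9}; you also spell out what the paper leaves implicit, namely the commutation relations, the invariance of $(\cdot,\cdot)_{\xi}$, the identity $U F_j = E_j$, and choosing the rotation's columns to be $W^2, W^3$. One small point: if $(\xi/|\xi|,W^2,W^3)$ is negatively oriented, that matrix is not in $SO(3)$, but this is harmless because $L$ is invariant under all of $O(3)$, or you can simply replace $W^3$ by $-W^3$.
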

\begin{proof}
Let $\mathbb{O}$ be a rotational transformation in $\mathbb{R}^3$ such that $\mathbb{O}:\frac{\xi}{|\xi|}\rightarrow(1,0,0)$. We have
$$
\mathbb{O}^{-1}\(L-\mathrm{i}\epsilon(v\cdot\xi)-\mathrm{i}\epsilon\frac{v\cdot\xi}{1+|\xi|^2}P_d\)\mathbb{O}=L-\mathrm{i}\epsilon|\xi| v_1-\mathrm{i}\epsilon\frac{v_1|\xi|}{1+|\xi|^2}P_d.
$$
Thus, from Lemma \ref{3dmvpbsp9}, we have the following eigenvalues and eigenfunctions for \eqref{1msp21}:
\bma
\bigg(L-\mathrm{i}\epsilon(v\cdot\xi)-\mathrm{i}\epsilon&\frac{v\cdot\xi}{1+|\xi|^2}P_d\bigg)\psi_j(\xi,\epsilon)=\lambda_j(|\xi|,\epsilon)\psi_j(\xi,\epsilon),\nonumber\\
\lambda_j(|\xi|,\epsilon)=\beta_j(|\xi|,\epsilon),\quad &\psi_j(\xi,\epsilon)=\mathbb{O}e_j(\xi,\epsilon),\quad j=-1,0,1,2,3.\nonumber
\ema
This proves the lemma.
\end{proof}


By virtue of Lemma \ref{3dmvpbsp6} and Lemma \ref{3dmvpbsp9}, we can analyze on the semigroup $S(t,\xi,\epsilon)=e^{\frac{tB_{\epsilon}(\xi)}{\epsilon^2}}$ precisely by using an argument similar to that of \cite{Li-4}.
\begin{lem}\label{3dmvpbsp12}
The semigroup $S(t,\xi,\epsilon)=e^{\frac{tB_{\epsilon}(\xi)}{\epsilon^2}}$ with $\xi\in\mathbb{R}^3$ has the following decomposition:
\bq\label{se1}
S(t,\xi,\epsilon)f=S_1(t,\xi,\epsilon)f+S_2(t,\xi,\epsilon)f,\quad f\in L^2_{\xi}(\mathbb{R}^3), \ t>0,
\eq
where
\bq\label{se2}
S_1(t,\xi,\epsilon)f=\sum^{3}_{j=-1}e^{\frac{t\lambda_j(|\xi|,\epsilon)}{\epsilon^2}}\big(f,\overline{\psi_j(\xi,\epsilon)}\big)_{\xi}\psi_j(\xi,\epsilon),\quad \epsilon|\xi|\leq r_0,
\eq
with $\big(\lambda_j(|\xi|,\epsilon),\psi_j(\xi,\epsilon)\big)$ being the eigenvalue and eigenfunction of the operator $B_{\epsilon}(\xi)$ given in Lemma \ref{3dmvpbsp10} for $\epsilon|\xi|\leq r_0$, and $S_2(t,\xi,\epsilon)f=S(t,\xi,\epsilon)f-S_1(t,\xi,\epsilon)f$ satisfies for two constants $\sigma_0>0$ and $C>0$ independent of $\xi$ and $\epsilon$ that
\bq\label{se3}
\|S_2(t,\xi,\epsilon)f\|_{\xi}\leq Ce^{-\frac{\sigma_0 t}{\epsilon^2}}\|f\|_{\xi},\quad t>0.
\eq
\end{lem}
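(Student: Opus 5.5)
We prove the decomposition through the resolvent representation of the semigroup, splitting into the low-frequency region $\epsilon|\xi|\leq r_0$ and the high-frequency region $\epsilon|\xi|>r_0$. Since $B_{\epsilon}(\xi)$ generates a contraction semigroup on $L^2_{\xi}$ (Lemma \ref{3dmvpbsp1}), we write, for $f\in D(B_{\epsilon}(\xi)^2)$ and $\kappa>0$,
$$
e^{\frac{tB_{\epsilon}(\xi)}{\epsilon^2}}f=\frac{1}{2\pi\mathrm{i}}\int_{\kappa-\mathrm{i}\infty}^{\kappa+\mathrm{i}\infty}e^{\frac{\lambda t}{\epsilon^2}}(\lambda-B_{\epsilon}(\xi))^{-1}f\,d\lambda .
$$
The integral is understood after the standard splitting $(\lambda-B_\epsilon)^{-1}=(\lambda-c_\epsilon)^{-1}+(\lambda-c_\epsilon)^{-1}(K-\mathrm{i}\epsilon\frac{v\cdot\xi}{1+|\xi|^2}P_d)(\lambda-B_\epsilon)^{-1}$. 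Here $c_\epsilon(\xi)=-\nu(v)-\mathrm{i}\epsilon(v\cdot\xi)$ is a multiplication operator, so $e^{tc_\epsilon/\epsilon^2}$ is explicit and bounded by $e^{-\nu_0t/\epsilon^2}$. The remaining term decays like $|\mathrm{Im}\lambda|^{-2}$ along vertical lines after one more iteration, using $\|(\lambda-c_\epsilon)^{-1}K(\lambda-c_\epsilon)^{-1}\|\to0$ as $|\mathrm{Im}\lambda|\to\infty$. The argument follows Li-4, adapted to the $\epsilon$-scaling: everything depends only on $\epsilon\xi$ and $\xi$ through the bounded factor $\frac{1}{1+|\xi|^2}$.

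\textbf{High frequency, $\epsilon|\xi|> r_0$.} By Lemma \ref{3dmvpbsp6}(2), $\sigma(B_\epsilon(\xi))\subset\{\mathrm{Re}\lambda<-\alpha(r_0)\}$. We need a resolvent bound on the line $\mathrm{Re}\lambda=-\alpha/2$ that is uniform in $\xi$ with $\epsilon|\xi|\ge r_0$ and in $\epsilon$:
\begin{itemize}
\item For $|\mathrm{Im}\lambda|\geq y_1$, it comes from Lemma \ref{3dmvpbsp6}(1) and the Neumann-series argument: $\|K(\lambda-c_\epsilon)^{-1}\|$ is small for large $|\mathrm{Im}\lambda|$ or large $\epsilon|\xi|$.
\item On the compact set of $\lambda$ with bounded $\epsilon|\xi|$, it follows from continuity and the absence of spectrum.
\item For $\epsilon|\xi|$ large, $\|K(\lambda-c_\epsilon(\xi))^{-1}\|\to0$ uniformly, and the $P_d$ term carries the factor $\epsilon|\xi|/(1+|\xi|^2)\le \epsilon$.
\end{itemize}
Shifting the contour to $\mathrm{Re}\lambda=-\alpha/2$ gives $\|S(t)f\|_\xi\le Ce^{-\alpha t/(2\epsilon^2)}\|f\|_\xi$. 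We then define $S_1=0$ and $S_2=S$ there; density extends the bound to all $f$.

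\textbf{Low frequency, $\epsilon|\xi|\leq r_0$.} By Lemma \ref{3dmvpbsp10}, the only spectrum in $\{\mathrm{Re}\lambda\ge -\mu/2\}$ is the five eigenvalues $\lambda_j$, all in $|\lambda|\le\delta$ by Lemma \ref{3dmvpbsp6}(3). We deform the contour to $\mathrm{Re}\lambda=-\mu/2$ and pick up the residues at the $\lambda_j$. Because the $\lambda_j$ are distinct for $\epsilon|\xi|\neq0$ and the eigenfunctions are biorthonormal in $(\cdot,\overline{\cdot})_\xi$, the Riesz projection onto the $\lambda_j$-eigenspace is $f\mapsto (f,\overline{\psi_j})_\xi\psi_j$. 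The residues therefore give exactly $S_1$ in \eqref{se2}. The case $\xi=0$ (all $\lambda_j=0$) is handled by continuity, since the total projection $\sum_j(\cdot,\overline{\psi_j})_\xi\psi_j$ is smooth in $\xi$. The remaining integral on $\mathrm{Re}\lambda=-\mu/2$ gives $S_2$ with decay $e^{-\mu t/(2\epsilon^2)}$, again provided the resolvent is uniformly bounded on that line for $\epsilon|\xi|\le r_0$. We set $\sigma_0=\min(\mu/2,\alpha/2)$.

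\textbf{Main obstacle.} The hard step is uniformity of the resolvent bounds on the shifted lines in both $\xi$ and $\epsilon$, especially near the transition $\epsilon|\xi|\approx r_0$ and for bounded $|\mathrm{Im}\lambda|$. The plan is to reduce everything to the variables $(z,\xi)=(\epsilon\xi,\xi)$ and note that $B_\epsilon(\xi)=L-\mathrm{i}v\cdot z-\mathrm{i}\frac{v\cdot z}{1+|\xi|^2}P_d$. The coefficient $\theta=\frac{1}{1+|\xi|^2}\in[0,1]$ then enters only as a bounded parameter. Compactness of $(z,\theta,\lambda)$ over the relevant closed bounded set, with $|z|\le R$, $\theta\in[0,1]$ and $|\mathrm{Im}\lambda|\le y_1$, together with large-$|z|$ and large-$|\mathrm{Im}\lambda|$ smallness of $K(\lambda-c)^{-1}$, gives the uniform bounds. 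Spectral exclusion for every fixed $\theta\in[0,1]$ is needed as well, and Lemmas \ref{3dmvpbsp2}–\ref{3dmvpbsp6} apply verbatim with $\theta$ in place of $(1+|\xi|^2)^{-1}$.
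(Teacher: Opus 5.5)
Your proposal is correct and is essentially the argument the paper invokes by citing \cite{Li-4}: Laplace inversion of the resolvent after splitting off $(\lambda-c_{\epsilon}(\xi))^{-1}$, then a contour shift that picks up the residues at the $\lambda_j$ when $\epsilon|\xi|\le r_0$ and none when $\epsilon|\xi|>r_0$; your reduction to the parameters $(\epsilon\xi,(1+|\xi|^2)^{-1})$ is the right way to make $C$ and $\sigma_0$ independent of $\epsilon$ and $\xi$. Two small repairs are needed: first, $\lambda_2=\lambda_3$ is a double eigenvalue (semisimple, because the total Riesz projection has rank five and contains the five independent $\psi_j$), so its residue $\sum_{j=2,3}(f,\overline{\psi_j})_{\xi}\psi_j$ comes from semisimplicity rather than from distinctness; second, the vertical-line integral is best controlled by $\int_{\mathbb{R}}\|(x+\mathrm{i}y-c_{\epsilon}(\xi))^{-1}f\|^2dy\le\pi(x+\nu_0)^{-1}\|f\|^2$ together with Cauchy--Schwarz, since the qualitative statement $\|(\lambda-c_{\epsilon})^{-1}K(\lambda-c_{\epsilon})^{-1}\|\to0$ by itself gives no integrable rate in $|\mathrm{Im}\lambda|$.
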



\section{Fluid approximation of semigroup}\setcounter{equation}{0}
\label{sect3}
In this section, we give the first and second order fluid approximations of the semigroup $e^{\frac{tB_{\epsilon}}{\epsilon^2}}$, which will be used to prove the convergence and establish the convergence rate of the solution to the mVPB system \eqref{Pm1}-\eqref{Pm3} towards the solution to the NSPF system \eqref{NS1}-\eqref{NS5}.

Firstly,  introduce a function sapce  $ H^k_P\ (L^2_P=H^0_P)$ with the norm
\bmas\|f\|_{H^k_P}&=\(\intr (1+|\xi|^2)^k
\|\hat{f}\|^2_{\xi} d\xi \)^{1/2}\\
&=\(\intr (1+|\xi|^2)^k
     \(\|\hat{f}\|^2+\frac1{1+|\xi|^2}\lt|(\hat{f},\sqrt{M})\rt|^2\)d\xi
    \)^{1/2}.
\emas
For any $f_0\in L^2(\R^3_x\times \R^3_v)$, set
\bq
e^{\frac{tB_{\epsilon}}{\epsilon^2}}f_0=\(\mathcal{F}^{-1}e^{\frac{tB_{\epsilon}(\xi)}{\epsilon^2}}\mathcal{F}\)f_0.
\eq
By Lemma \ref{3dmvpbsp1}, it hold that
$$
\|e^{\frac{tB_{\epsilon}}{\epsilon^2}}f_0\|^2_{H^k_P}=\int_{\mathbb{R}^3}(1+|\xi|^2)^k\|e^{\frac{tB_{\epsilon}(\xi)}{\epsilon^2}}\hat{f}_0\|^2_{\xi}d\xi\leq \int_{\mathbb{R}^3}(1+|\xi|^2)^k\|\hat{f}_0\|^2_{\xi}d\xi=\|f_0\|^2_{H^k_P}.
$$
This means that the operator $\frac{B_{\epsilon}}{\epsilon^2}$ generates a strongly continuous contraction semigroup $e^{\frac{tB_{\epsilon}}{\epsilon^2}}$ in $H^k_P$, and therefore, $f(t,x,v)=e^{\frac{tB_{\epsilon}}{\epsilon^2}}f_0$ is a global solution to the linearized mVPB system \eqref{Bm1} for any $f_0\in H^k_P$.

\subsection{Semigroup of the linear NSPF system}
In this subsection, we are going to study the solution to the linear  NSPF system. Consider the following linear NSPF system of $(n,m,q)(t,x)$:
\bma
&\nabla_x\cdot m=0,\quad n+\sqrt{\frac{2}{3}}q+(I-\Delta_x)^{-1}n=0,\label{lns1}\\
&\partial_tm-\kappa_0\Delta_{x}m+\nabla_xp=H_1,\label{lns3}\\
&\partial_t\bigg(q-\sqrt{\frac{2}{3}}n\bigg)-\kappa_1\Delta_xq=H_2,\label{lns4}
\ema
where $H_1=\big(H^1_1,H^2_1,H^3_1\big)$ and $H_2$ are given functions, $p$ is the pressure satisfying $p=\Delta_x^{-1}\div_xH_1$, and the initial data $(n,m,q)(0)$ satisfies \eqref{INS1}. For any $U_0=U_0(x,v)\in N_0$, we define
\bq\label{lns4j1}
V(t,\xi)\hat{U}_0=\sum_{j=0,2,3}e^{-d_j(|\xi|)t}\big(\hat{U}_0,E_j(\xi)\big)_{\xi}E_j(\xi),
\eq
where $d_j(|\xi|)$ and $E_j(\xi)$, $j=0,2,3$ are defined by \eqref{sp4} and \eqref{egf1-1} respectively. Set
\bq\label{lns4j1-1}
V(t)U_0=\(\mathcal{F}^{-1}V(t,\xi)\mathcal{F}\)U_0.
\eq


Then, we can represent the solution to the NSPF system \eqref{lns1}-\eqref{lns4} by the semigroup $V(t)$ as follows.
\begin{lem}\label{3dmvpbfas1}
For any $f_0\in L^2$ and $H_i\in L^1_t(L^2_x)$, $i=1,2$, define
$$
U(t,x,v)=V(t)P_0f_0+\int^t_0V(t-s)H(s)ds,
$$
where
$$
H(t,x,v)=H_1(t,x)\cdot v\chi_0+H_2(t,x)\chi_4.
$$
Let $(n,m,q)=((U,\chi_0),(U,v\chi_0),(U,\chi_4))$. Then $(n,m,q)(t,x)\in L^{\infty}_t(L^2_x)$ is an unique global solution to the linear NSPF system \eqref{lns1}-\eqref{lns4} with the initial data $(n,m,q)(0)$ satisfies \eqref{INS1}.
\end{lem}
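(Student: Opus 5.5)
We will verify directly in Fourier variables that $\hat U(t,\xi)=V(t,\xi)P_0\hat f_0+\int_0^tV(t-s,\xi)\hat H(s,\xi)ds$ produces coefficients satisfying the Fourier transform of \eqref{lns1}-\eqref{lns4}. Uniqueness then follows by linearity and an energy estimate.

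\textbf{The constraints \eqref{lns1}.} The range of $V(t,\xi)$ is $\mathrm{span}\{E_0(\xi),E_2(\xi),E_3(\xi)\}$. Since $E_2,E_3$ are $v\cdot W^k\chi_0$ with $W^k\perp\xi$, the momentum component is orthogonal to $\xi$, so $\xi\cdot\hat m=0$. The $E_0$ component has $\hat n=\frac{\sqrt2(1+|\xi|^2)}{\sqrt{(|\xi|^2+2)(5|\xi|^2+8)}}c$ and $\hat q=-\frac{\sqrt{3|\xi|^2+6}}{\sqrt{5|\xi|^2+8}}c$. A direct check gives
$$\Big(1+\frac1{1+|\xi|^2}\Big)\hat n+\sqrt{\tfrac23}\,\hat q=\frac{\sqrt2(|\xi|^2+2)}{\sqrt{(|\xi|^2+2)(5|\xi|^2+8)}}\,c-\frac{\sqrt2\sqrt{|\xi|^2+2}}{\sqrt{5|\xi|^2+8}}\,c=0,$$
which is the second relation in \eqref{lns1}.

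\textbf{The evolution equations \eqref{lns3}-\eqref{lns4}.} Differentiating in $t$ gives $\partial_t\hat U=-\sum_{j=0,2,3}d_j(|\xi|)(\hat U,E_j)_\xi E_j+V(0,\xi)\hat H(t)$, where $V(0,\xi)$ is the $(\cdot,\cdot)_\xi$-orthogonal projection onto $\mathrm{span}\{E_0,E_2,E_3\}$.

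\emph{Momentum.} The $E_2,E_3$ part of $\hat H$ is the Leray projection of $\hat H_1$, namely $\hat H_1-\xi(\xi\cdot\hat H_1)/|\xi|^2$. The discarded part is exactly $-\mathrm{i}\xi\hat p$ with $p=\Delta_x^{-1}\div_xH_1$. Since $d_{2,3}=\kappa_0|\xi|^2$ by \eqref{kapa} and \eqref{sp4}, this yields \eqref{lns3}.

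\emph{Scalar part.} Here we test against $\chi_4-\sqrt{2/3}\chi_0$; by \eqref{InP1} this is the key step. We compute $(E_0,E_0)_\xi$-type quantities and need the identity that the functional $U\mapsto(U,\chi_4-\sqrt{2/3}\chi_0)$ restricted to the constraint set is a multiple of $(U,E_0)_\xi$. This should follow from \eqref{InP1}: $(U,E_0)_\xi=(U,E_0+\frac1{1+|\xi|^2}P_dE_0)$, and $E_0+\frac{1}{1+|\xi|^2}P_dE_0$ is proportional to $\chi_0\cdot\frac{\sqrt2(|\xi|^2+2)}{\cdots}-\chi_4\cdots$, whose pairing with a constrained $U$ reduces to $\hat q-\sqrt{2/3}\hat n$ up to a factor. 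Using $d_0=\kappa_1|\xi|^2\frac{3|\xi|^2+6}{5|\xi|^2+8}$, we then check that $\partial_t(\hat q-\sqrt{2/3}\hat n)=-\kappa_1|\xi|^2\hat q+\hat H_2$. This amounts to expressing $\hat q$ in terms of $\hat q-\sqrt{2/3}\hat n$ via the constraint; the ratio is exactly $\frac{3|\xi|^2+6}{5|\xi|^2+8}$. The same factor handles $\hat H_2$, because $(\hat H,E_0)_\xi E_0$ projects back so that its $(\chi_4-\sqrt{2/3}\chi_0)$-component equals $\hat H_2$. This algebra is the main obstacle, and we will carry it out carefully.

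\textbf{Initial data and regularity.} At $t=0$, $V(0)P_0\hat f_0$ is the $\xi$-projection. We check \eqref{INS1} as follows: $\hat m(0)$ is the Leray projection of $(\hat f_0,v\chi_0)$; $\hat q(0)-\sqrt{2/3}\hat n(0)=(\hat f_0,\chi_4-\sqrt{2/3}\chi_0)$ by the same projection identity; and the constraint holds. The $L^\infty_t(L^2_x)$ bound comes from $d_j\ge0$, the bound $\|E_j\|\le C$, and \eqref{fs1}. For uniqueness, we take the difference of two solutions with zero data and $H=0$. The constraint expresses $n,q$ through $\theta=q-\sqrt{2/3}n$ by a bounded multiplier, and the Fourier ODEs $\partial_t\hat\theta=-d_0\hat\theta$, $\partial_t\hat m=-\kappa_0|\xi|^2\hat m$ force the difference to vanish.
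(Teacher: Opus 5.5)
Your proposal is correct and is essentially the paper's argument run in the opposite direction. The paper Fourier-transforms \eqref{lns1}--\eqref{lns4} and uses the constraint to get $\hat n=-\sqrt{2/3}\,\frac{1+|\xi|^2}{2+|\xi|^2}\hat q$ and the reduced equation $\frac{5|\xi|^2+8}{3|\xi|^2+6}\partial_t\hat q+\kappa_1|\xi|^2\hat q=\hat H_2$. It then solves this and the Leray-projected momentum equation by Duhamel, and recognizes the coefficients as $(\cdot,E_j(\xi))_\xi\,(E_j(\xi),\cdot)$, which gives existence and uniqueness in one step; you instead verify the $V$-formula and then prove uniqueness separately through the same reduced ODEs.

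The algebra you flag as the main obstacle does close, and more cleanly than you state. The identity $E_0+\frac{1}{1+|\xi|^2}P_dE_0=-\frac{\sqrt{3|\xi|^2+6}}{\sqrt{5|\xi|^2+8}}\big(\chi_4-\sqrt{2/3}\,\chi_0\big)$ holds exactly, so $(U,E_0)_\xi$ is a fixed multiple of $\hat q-\sqrt{2/3}\,\hat n$ for every $U\in N_0$, not only for constrained $U$. This directly gives the ratio $\frac{3|\xi|^2+6}{5|\xi|^2+8}$, the $\hat H_2$ term, and the second condition in \eqref{INS1}.
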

\begin{proof}
By taking Fourier translation to \eqref{lns1}-\eqref{lns4}, we have
\bma
&\mathrm{i}\xi\cdot\hat{m}=0,\quad \hat{n}+\frac{1}{1+|\xi|^2}\hat{n}+\sqrt{\frac{2}{3}}\hat{q}=0,\label{lnsf1}\\
&\partial_t\hat{m}+\kappa_0|\xi|^2\hat{m} =\mathbb{O}_1\hat{H}_1,\label{lnsf3}\\
&\partial_t\bigg(\hat{q}-\sqrt{\frac{2}{3}}\hat{n}\bigg)+\kappa_1|\xi|^2\hat{q}=\hat{H}_2,\label{lnsf4}
\ema
where the initial data $(\hat{n},\hat{m},\hat{q})(0)$ satisfies
\bq\label{lnsi1}
\hat{m}(0)=\mathbb{O}_1\big(P_0\hat{f}_0,v\chi_0\big),\quad \hat{q}(0)-\sqrt{\frac{2}{3}}\hat{n}(0)=\bigg(P_0\hat{f}_0,\chi_4-\sqrt{\frac{2}{3}}\chi_0\bigg)
\eq
with $\mathbb{O}_1=\mathbb{O}_1(\xi)$ being a projection defined by
\bq
\mathbb{O}_1y=y-\(y\cdot\frac{\xi}{|\xi|}\)\frac{\xi}{|\xi|}, \quad \forall y\in \mathbb{R}^3.
\eq

By \eqref{lnsf1} and \eqref{lnsf4}, we obtain
\bma
&\hat{n}=-\sqrt{\frac{2}{3}}\frac{1+|\xi|^2}{2+|\xi|^2}\hat{q},\label{lnsj1}\\ &\frac{5|\xi|^2+8}{3|\xi|^2+6}\partial_t\hat{q}+\kappa_1|\xi|^2\hat{q}=\hat{H}_2.\label{lnsj2}
\ema
It follows from \eqref{lnsj2}, \eqref{lnsi1}, \eqref{egf1-1} and \eqref{sp4} that
\bq\label{lnsj3}
\begin{split}
\hat{q}(t,\xi)&=e^{-d_0(|\xi|)t}\hat{q}(0)+\int^t_0e^{-d_0(|\xi|)(t-s)}\frac{3|\xi|^2+6}{5|\xi|^2+8}\hat{H}_2(s)ds\\
&=e^{-d_0(|\xi|)t}\big(P_0\hat{f}_0,E_{0}(\xi)\big)_{\xi}\big(E_0(\xi),\chi_4\big)\\
&\quad+\int^t_0e^{-d_0(|\xi|)(t-s)}\big(\hat{H}(s),E_0(\xi)\big)_{\xi}\big(E_0(\xi),\chi_4\big)ds.
\end{split}
\eq
This and \eqref{lnsj1} imply that
\bq\label{lnsj4}
\begin{split}
\hat{n}(t,\xi)&=e^{-d_0(|\xi|)t}\big(P_0\hat{f}_0,E_{0}(\xi)\big)_{\xi}\big(E_0(\xi),\chi_0\big)\\
&\quad+\int^t_0e^{-d_0(|\xi|)(t-s)}\big(\hat{H}(s),E_0(\xi)\big)_{\xi}\big(E_0(\xi),\chi_0\big)ds.
\end{split}
\eq
By \eqref{lnsf1} and \eqref{lnsf3}, we have
\bq\label{lnsj5}
\begin{split}
\hat{m}(t,\xi)&=e^{-d_2(|\xi|)t} \hat{m}(0)+\int^t_0e^{-d_2(|\xi|)(t-s)}\mathbb{O}_1\hat{H}_1(s)ds\\
&=\sum_{j=2,3}e^{-d_j(|\xi|)t}\big(P_0\hat{f}_0,E_{j}(\xi)\big)_{\xi}\big(E_j(\xi),v\chi_0\big)\\
&\quad+\sum_{j=2,3}\int^t_0e^{-d_j(|\xi|)(t-s)}\big(\hat{H}(s),E_j(\xi)\big)_{\xi}\big(E_j(\xi),v\chi_0\big)ds.
\end{split}
\eq
Noting that $(E_0(\xi),v\chi_0)=0$ and $(E_j(\xi),\chi_0)=(E_j(\xi),\chi_4)=0$, $j=2,3,$ we can prove the lemma by using \eqref{lnsj3}-\eqref{lnsj5}.
\end{proof}

We have the time decay rates of the semigroup  $V(t)$ as follows.
\begin{lem}\label{3dmvpbfas3}
For any $\alpha\in\mathbb{N}^3$ and any $u_0\in N_0$, we have
\bma
\|\partial^{\alpha}_xV(t)u_0\|_{L^2}&\leq C(1+t)^{-\frac{3+2m}{4}}(\|\partial^{\alpha}_xu_0\|_{L^2}+\|\partial^{\alpha'}_xu_0\|_{L^{2,1}}),\\
\|\partial^{\alpha}_xV(t)u_0\|_{L^{\infty}}&\leq C(1+t)^{-\frac{3}{4}}\beta_m(t)(\|\partial^{\alpha'}_xu_0\|_{L^{\infty}}+\|\partial^{\alpha'}_xu_0\|_{L^2}),
\ema
where $\alpha'\leq\alpha$, $m=|\alpha-\alpha'|$, $C>0$ is a constant and $\beta_0(t)=\ln\(2+\frac{1}{t}\)$, $\beta_m(t)=t^{-\frac{m}{2}}$, $m\geq1$. 
\end{lem}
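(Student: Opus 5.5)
Both estimates come from working in Fourier space with the explicit formula \eqref{lns4j1}. The key facts are these. The eigenfunctions $E_j(\xi)$, $j=0,2,3$, in \eqref{egf1-1} are bounded uniformly in $\xi$, together with their $\chi$-coefficients. The inner product $(\cdot,\cdot)_\xi$ is equivalent to $(\cdot,\cdot)$ by \eqref{fs1}. The rates satisfy $d_j(|\xi|)\ge c|\xi|^2$ for all $\xi$; for $d_0$ this holds because $\frac{3|\xi|^2+6}{5|\xi|^2+8}\in[\frac35,\frac34]$, and $-(L^{-1}P_1(v_1\chi_4),v_1\chi_4)=\kappa_1>0$. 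Hence
$$
|\xi^\alpha V(t,\xi)\hat u_0|\le C|\xi|^{m}e^{-c|\xi|^2t}|\xi^{\alpha'}\hat u_0(\xi)|
$$
pointwise in $\xi$, with $m=|\alpha-\alpha'|$.

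\textbf{The $L^2$ estimate.} Write $u_0=\sum_{i}a_i(x)\chi_i$ with finitely many components. By Plancherel it suffices to bound $\int|\xi|^{2m}e^{-2c|\xi|^2t}|\xi^{\alpha'}\hat a(\xi)|^2\,d\xi$.

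For $t\le1$, drop $|\xi|^m$ by sending its derivatives back to $\alpha$, i.e. use $|\xi^\alpha \hat u_0|$ directly; this gives the bound by $\|\partial_x^\alpha u_0\|_{L^2}$. For $t\ge1$, split into $|\xi|\le1$ and $|\xi|\ge1$.
\begin{itemize}
\item On $|\xi|\le1$, use $|\widehat{\partial^{\alpha'}a}|\le\|\partial^{\alpha'}a\|_{L^1}$ to get $\int_{|\xi|\le1}|\xi|^{2m}e^{-2c|\xi|^2t}\,d\xi\le C(1+t)^{-\frac{3+2m}{2}}$.
\item On $|\xi|\ge1$, the factor $e^{-ct}$ times $\|\partial^\alpha u_0\|_{L^2}$ suffices.
\end{itemize}
This yields the first inequality.

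\textbf{The $L^\infty$ estimate.} This is the main obstacle, since $V(t,\xi)$ is not a smooth Fourier multiplier near $\xi=0$. The projections $(\hat u_0,E_j)_\xi E_j$ involve the zero-order homogeneous symbols $\xi_k\xi_l/|\xi|^2$ (the Leray projection) and the bounded factors $(1+|\xi|^2)/(2+|\xi|^2)$. The Leray part is a Calder\'on--Zygmund operator, which is unbounded on $L^\infty$; this is the source of the logarithm.

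Write $\partial^\alpha V(t)u_0$ as a kernel $K_t*\partial^{\alpha'}a$, where $\hat K_t(\xi)=(\mathrm{i}\xi)^{\alpha-\alpha'}e^{-d_j(|\xi|)t}\,p(\xi)$ and $p$ is a zero-order symbol.
\begin{itemize}
\item For $m\ge1$, use scaling. The symbol $(\mathrm{i}\xi)^{\alpha-\alpha'}e^{-c|\xi|^2t}p(\xi)$ behaves like $t^{-m/2}\Phi(\sqrt t\,\xi)$ with $\Phi$ a Schwartz-type symbol vanishing to order $m$ at $0$. Its kernel is in $L^1$ with norm $Ct^{-m/2}$, which gives $\|\cdot\|_{L^\infty}\le Ct^{-m/2}\|\partial^{\alpha'}a\|_{L^\infty}$. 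The non-polynomial dependence of $d_j$ and $p$ on $|\xi|$ enters only through smooth bounded functions of $|\xi|^2$; I handle it by a dyadic (Littlewood--Paley) decomposition with Mikhlin bounds on each piece.
\item For $m=0$, split with dyadic blocks $\Delta_k$. For the high frequencies $2^k\ge\max(1,t^{-1/2})$, summing the dyadic pieces yields $\sum_k e^{-c4^kt}\lesssim\ln(2+1/t)$ times $\|a\|_{L^\infty}$, each block being bounded on $L^\infty$. The low frequencies are controlled by the $L^2$ norm via Bernstein, $\|\Delta_k a\|_{L^\infty}\le C2^{3k/2}\|a\|_{L^2}$.
\end{itemize}

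\textbf{The factor $(1+t)^{-3/4}$.} For $t\ge1$, bound the low-frequency part by $\int_{|\xi|\le1}e^{-c|\xi|^2t}|\hat a|\,d\xi\le C(1+t)^{-3/4}\|a\|_{L^2}$ using Cauchy--Schwarz. The high-frequency part decays exponentially. For $t\le1$, the factor $(1+t)^{-3/4}$ is harmless.

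The delicate step is the uniform $L^1$ kernel bounds (Mikhlin-type) for the symbols $e^{-d_j(|\xi|)t}p(\xi)$ on each dyadic annulus, uniformly in $t$. I would verify these by checking $|\partial_\xi^\gamma(\cdot)|\le C_\gamma|\xi|^{-|\gamma|}e^{-c|\xi|^2t}$, which follows because $d_j(|\xi|)$ is $|\xi|^2$ times a smooth function of $|\xi|^2$ bounded above and below.
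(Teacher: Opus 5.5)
Your proposal is correct in substance, and it supplies what the paper omits. The paper only records the symbol decomposition \eqref{lnsp10}--\eqref{lnsp11}: smooth bounded functions of $|\xi|^2$ in $R_0$ and $R_4$, and the Leray-type symbol $\xi_j\xi_k/|\xi|^2$ in $R_j$. It then refers to Lemma 3.5 of \cite{Li-1} for the estimates.

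Your route differs in how the $L^\infty$ bounds are organized. You work on the frequency side, with a Littlewood--Paley decomposition, Mikhlin bounds on each annulus, and Bernstein at low frequencies. The physical-space counterpart, in the spirit of Lemma \ref{3dmvpbfas4-1}, bounds the kernel of $\xi^{\alpha-\alpha'}p(\xi)e^{-d_j(|\xi|)t}$ pointwise by $C(t+|x|^2)^{-(3+m)/2}$ and splits at $|x|=1$:
\begin{itemize}
\item for $m=0$, the unit ball gives $\int_{|x|\le1}(t+|x|^2)^{-3/2}dx\le C\ln(2+1/t)$, paired with $\|u_0\|_{L^\infty}$, while the exterior part lies in $L^2_x$ and is paired with $\|u_0\|_{L^2}$;
\item for $m\ge1$, the kernel lies in $L^1_x$ with norm $Ct^{-m/2}$.
\end{itemize}
The two routes are equivalent, since summing your dyadic kernel bounds yields exactly that pointwise bound. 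Yours absorbs the non-homogeneous factors, such as $(1+|\xi|^2)/(2+|\xi|^2)$ and $d_0(|\xi|)$, through uniform Mikhlin estimates without computing kernels. The pointwise version makes the source of the logarithm visible: it is the $|x|^{-3}$ Calder\'on--Zygmund tail on the region $\sqrt t\le|x|\le1$.

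One step needs repair as written, in the case $m=0$, $t\le1$. You take the high frequencies to be $2^k\ge\max(1,t^{-1/2})$ and send all other blocks to Bernstein. The blocks with $1\le2^k<t^{-1/2}$ then contribute $\sum 2^{3k/2}\|a\|_{L^2}\sim t^{-3/4}\|a\|_{L^2}$, not a logarithm. Meanwhile, over $2^k\ge t^{-1/2}$ the sum $\sum e^{-c4^kt}$ is only $O(1)$.

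The logarithm comes precisely from those intermediate blocks. Estimate every block with $2^k\ge1$ on $L^\infty$: each has kernel $L^1$-norm $\lesssim\min(1,e^{-c4^kt})$, and about $\log_2 t^{-1/2}$ of them have size $O(1)$. Use Bernstein only for $2^k<1$. Your displayed bound $\sum_{k\ge0}e^{-c4^kt}\lesssim\ln(2+1/t)$ already reflects this, so only the stated threshold is wrong.

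Similarly, for $m\ge1$ and $t\ge1$, keep the factor $|\xi|^m$ in the low-frequency Cauchy--Schwarz step. That gives $(1+t)^{-3/4}t^{-m/2}$ rather than just $(1+t)^{-3/4}$.
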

\begin{proof}
By \eqref{lns4j1}, we have
\bma
&V(t,\xi)\hat{u}_0=e^{-d_0(|\xi|)t}R_0(\xi)\bigg(\sqrt{\frac{3}{2}}\hat{U}_0-\hat{U}_4\bigg)+e^{-d_2(|\xi|)t}\sum_{j=1}^3R_j(\xi)\hat{U}_j,\label{lnsp10}\\
&\frac{\xi}{1+|\xi|^2} (V(\xi,t)\hat{u}_0,\chi_0 )=e^{-d_0(|\xi|)t}R_4(\xi)\bigg(\sqrt{\frac{3}{2}}\hat{U}_0-\hat{U}_4\bigg),\label{lnsp11}
\ema
where $U_j=(u_0,\chi_j),j=0,1,2,3,4$ and
\bma
&R_{0}(\xi)=\frac{\sqrt{6}\big(1+|\xi|^2\big)}{5|\xi|^2+8}\chi_0-\frac{3|\xi|^2+6}{5|\xi|^2+8}\chi_4,\nonumber\\
&R_j(\xi)=v_j\chi_0-\frac{(v\cdot\xi)}{|\xi|^2}\xi_j\chi_0,\quad j=1,2,3,\quad R_4(\xi)=\frac{\sqrt{6}\xi}{5|\xi|^2+8}.\nonumber
\ema
Then, we can estimate $\partial^{\alpha}_xV(t)u_0$ precisely by using an same argument as that of Lemma 3.5 in \cite{Li-1}. Hence, we omit the detail of the proof for brevity.
\end{proof}

\begin{remark} The semigroup $V(t)$ of INSP system associated with mVPB sytem is slightly different to the one of the classical VPB system. Precisely, the density of $V(t)u_0$ behaves like
\bma
 \(V(t,\xi)\hat{u}_0,\chi_0 \) =O(1)  e^{- |\xi|^2t} ,
\ema
while the density of  $V_c(t)u_0$ associated with classical VPB system behaves like (cf. \cite{Li-1})
\bma
 \(V_c(t,\xi)\hat{u}_0,\chi_0 \) =O(1)  e^{- |\xi|^2t}|\xi|^2.
\ema
This implies that the decay rate of density $\|n(t)\|_{L^2_x}$ in Lemma \ref{3dmvpbfas3} is slower than that of $\|n(t)\|_{L^2_x}$ in \cite{Li-1}.
\end{remark}


\subsection{Fluid approximation of $e^{\frac{t B_{\eps}}{\eps^2}}$}
We have the time decay rates of the semigroup $e^{\frac{tB_{\epsilon}}{\epsilon^2}}$  as follows.
\begin{lem}\label{3dmvpbfas2}
For any $\epsilon\in(0,1)$, $\alpha\in\mathbb{R}^3$ and any $f_0\in L^2$, we have
\bma
&\|P_0\partial^{\alpha}_xe^{\frac{tB_{\epsilon}}{\epsilon^2}}f_0\|_{L^2}\leq C\((1+t)^{-\frac{3+2m}{4}}+e^{-\frac{\sigma_0 t}{\epsilon^2}}\)(\|\partial^{\alpha}_xf_0\|_{L^2}+\|\partial^{\alpha'}_xf_0\|_{L^{2,1}}),\label{ns321}\\
&\|P_1\partial^{\alpha}_xe^{\frac{tB_{\epsilon}}{\epsilon^2}}f_0\|_{L^2}\leq
C\(\epsilon(1+t)^{-\frac{5+2m}{4}}+e^{-\frac{\sigma_0 t}{\epsilon^2}}\)(\|\partial^{\alpha}_xf_0\|_{H^1}+\|\partial^{\alpha'}_xf_0\|_{L^{2,1}}),\label{ns322}
\ema
where $\alpha'\leq\alpha$, $m=|\alpha-\alpha'|$, $\sigma_0>0$ and $C>0$ are two constants independent of $\epsilon$.

Moreover, if $P_0f_0=0$, then
\bma
&\|P_0\partial^{\alpha}_xe^{\frac{tB_{\epsilon}}{\epsilon^2}}f_0\|_{L^2}\leq C\(\epsilon(1+t)^{-\frac{5+2m}{4}}+e^{-\frac{\sigma_0 t}{\epsilon^2}}\)(\|\partial^{\alpha}_xf_0\|_{H^1}+\|\partial^{\alpha'}_xf_0\|_{L^{2,1}}),\label{ns325}\\
&\|P_1\partial^{\alpha}_xe^{\frac{tB_{\epsilon}}{\epsilon^2}}f_0\|_{L^2}\leq
C\(\epsilon^2(1+t)^{-\frac{7+2m}{4}}+e^{-\frac{\sigma_0 t}{\epsilon^2}}\)(\|\partial^{\alpha}_xf_0\|_{H^2}+\|\partial^{\alpha'}_xf_0\|_{L^{2,1}}).\label{ns326}
\ema
\end{lem}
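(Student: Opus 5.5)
We argue entirely in Fourier variables, using the decomposition $S(t,\xi,\epsilon)=S_1+S_2$ from Lemma \ref{3dmvpbsp12}. By Plancherel and \eqref{fs1},
$$
\|P_i\partial^{\alpha}_xe^{\frac{tB_{\epsilon}}{\epsilon^2}}f_0\|_{L^2}^2\le C\int_{\R^3}|\xi^{\alpha}|^2\|P_iS(t,\xi,\epsilon)\hat f_0\|_{\xi}^2d\xi .
$$
We split the frequency domain into the low-frequency region $\{\epsilon|\xi|\le r_0\}$ and the high-frequency region $\{\epsilon|\xi|>r_0\}$. For $\epsilon|\xi|>r_0$, Lemma \ref{3dmvpbsp6}(2) and the contraction property give exponential decay in $t/\epsilon^2$, and $S_1$ is absent there. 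On $\{\epsilon|\xi|\le r_0\}$ the remainder $S_2$ is bounded by \eqref{se3}. Both pieces are therefore controlled by $e^{-\sigma_0t/\epsilon^2}\|\partial^\alpha_x f_0\|_{L^2}$. This requires a uniform resolvent-based bound $\|S(t,\xi,\epsilon)\mathbf 1_{\epsilon|\xi|>r_0}\|\le Ce^{-\sigma_0t/\epsilon^2}$, which I take from the same argument as in \cite{Li-4} that underlies Lemma \ref{3dmvpbsp12}. It remains to treat $S_1$ for $\epsilon|\xi|\le r_0$.

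For $S_1$ we use Lemma \ref{3dmvpbsp10}. Write $\lambda_j/\epsilon^2=-\mathrm{i}|\xi|u_j/\epsilon-d_j(|\xi|)+O(\epsilon|\xi|^3)$. Since $d_j(|\xi|)\ge c\min(|\xi|^2,1)\cdot|\xi|^2/(1+|\xi|^2)\cdots$, and more precisely $d_j(|\xi|)\ge c|\xi|^2$ for all $\xi$ (each $d_j$ is comparable to $|\xi|^2$, since $\frac{\eta^2(3\eta^2+6)}{5\eta^2+8}\sim\eta^2$), the error $O(\epsilon|\xi|^3)\le Cr_0|\xi|^2$ can be absorbed for $r_0$ small. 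This gives $|e^{t\lambda_j/\epsilon^2}|\le e^{-c|\xi|^2t}$. The projections satisfy $P_0\psi_j=E_j+O(\epsilon|\xi|)$ and $P_1\psi_j=O(\epsilon|\xi|)$, so
$$
\|P_0S_1\hat f_0\|_\xi\le Ce^{-c|\xi|^2t}\|\hat f_0\|,\qquad \|P_1S_1\hat f_0\|_\xi\le C\epsilon|\xi|e^{-c|\xi|^2t}\|\hat f_0\|.
$$
We split $\xi^\alpha=\xi^{\alpha-\alpha'}\xi^{\alpha'}$. On $|\xi|\le1$ we bound $|\xi^{\alpha'}\hat f_0|$ by $\|\partial^{\alpha'}_xf_0\|_{L^{2,1}}$ in $L^\infty_\xi$, which gives
$$
\int_{|\xi|\le1}|\xi|^{2m}e^{-2c|\xi|^2t}d\xi\le C(1+t)^{-\frac{3+2m}{2}}
$$
for \eqref{ns321}, and the extra factor $\epsilon|\xi|$ gives $\epsilon(1+t)^{-\frac{5+2m}{4}}$ for \eqref{ns322}. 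On $|\xi|\ge1$ we have $e^{-c|\xi|^2t}\le Ce^{-ct}$, controlled by $\|\partial^\alpha_xf_0\|_{L^2}$. For $P_1$ the factor $|\xi|$ costs one derivative, whence the $H^1$ norm.

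For the case $P_0f_0=0$, the key point is that
$$
(\hat f_0,\overline{\psi_j})_\xi=(P_1\hat f_0,\overline{P_1\psi_j})=O(\epsilon|\xi|)\|\hat f_0\|,
$$
because $P_d$ and $P_0$ annihilate $\hat f_0$. This gives an extra factor $\epsilon|\xi|$ in every coefficient of $S_1$. Then $P_0S_1$ carries $\epsilon|\xi|$ and $P_1S_1$ carries $\epsilon^2|\xi|^2$. Repeating the low/high frequency computation yields \eqref{ns325}--\eqref{ns326}, with the $H^1$ and $H^2$ norms coming from the factors $|\xi|$ and $|\xi|^2$ at high frequency.

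The main obstacle is the uniform-in-$\epsilon$ bound for $\epsilon|\xi|\le r_0$, $|\xi|$ large. There $d_j(|\xi|)\sim|\xi|^2$ while the error $O(\epsilon^3|\xi|^3)/\epsilon^2$ must be controlled uniformly. A related difficulty is the high-frequency part, where $\xi$ is large but $\epsilon|\xi|$ is not small. This is handled by choosing $r_0$ small, so that $\epsilon|\xi|^3\le r_0|\xi|^2$, and by invoking the uniform exponential bound for $\epsilon|\xi|>r_0$.
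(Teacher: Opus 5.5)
Your proposal is correct and follows essentially the paper's argument. The ingredients match: the split $S=S_1+S_2$ with $S_2$ controlled by \eqref{se3}; the bound $|e^{t\lambda_j/\epsilon^2}|\le e^{-c|\xi|^2t}$, obtained by absorbing $O(\epsilon|\xi|^3)\le Cr_0|\xi|^2$ into $d_j\ge c|\xi|^2$; the low/high frequency split at $|\xi|=1$; and the extra factor $\epsilon|\xi|$ coming from $(\hat f_0,\overline{\psi_j})_\xi=(\hat f_0,\overline{P_1\psi_j})$ when $P_0f_0=0$. The only redundancy is that the uniform bound for $\epsilon|\xi|>r_0$ that you flag as an extra requirement is already contained in \eqref{se3}, since $S_2=S$ on that region.
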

\begin{proof}
By Lemma \ref{3dmvpbsp12}, we have that for $j=0,1$,
\bma
\|P_j\partial^{\alpha}_xe^{\frac{tB_{\epsilon}}{\epsilon^2}}f_0\|^2_{L^2}&=\int_{\mathbb{R}^3}\|P_j\xi^{\alpha}e^{\frac{tB_{\epsilon}(\xi)}{\epsilon^2}}\hat{f}_0\|^2d\xi\nonumber\\
&\leq\int_{\{|\xi|\leq\frac{r_0}{\epsilon}\}}\|\xi^{\alpha}P_jS_1(t,\xi,\epsilon)\hat{f}_0\|^2d\xi+\int_{\mathbb{R}^3}\|\xi^{\alpha}S_2(t,\xi,\epsilon)\hat{f}_0\|^2d\xi.\label{lnsp1}
\ema

We have by \eqref{fs1} and Lemma \ref{3dmvpbsp12} that
\bq\label{lnsp3}
\int_{\mathbb{R}^3}\|\xi^{\alpha}S_2(t,\xi,\epsilon)\hat{f}_0\|^2d\xi\leq Ce^{-\frac{2\sigma_0 t}{\epsilon^2}}\int_{\mathbb{R}^3}(\xi^{\alpha})^2\|\hat{f}_0\|^2d\xi\leq Ce^{-\frac{2\sigma_0 t}{\epsilon^2}}\|\partial^{\alpha}_xf_0\|^2_{L^2}.
\eq
By Lemmas \ref{3dmvpbsp10}-\ref{3dmvpbsp12}, we have for $\epsilon|\xi|\leq r_0$,
\be\label{s111}
S_1(t,\xi,\epsilon)\hat{f}_0=\sum^{3}_{j=-1}e^{\frac{-\mathrm{i}|\xi|u_j t}{\epsilon}-d_j t+O(\epsilon|\xi|^3)t}\big(\hat{f}_0,  \overline{\psi_j(\xi,\eps)} \big)_{\xi} \psi_j(\xi,\eps).
\ee
Thus,
\bma
&\quad\int_{\{|\xi|\leq\frac{r_0}{\epsilon}\}}\|\xi^{\alpha}P_0S_1(t,\xi,\epsilon)\hat{f}_0\|^2d\xi\nnm\\
&\leq C\int_{\{|\xi|\leq\frac{r_0}{\epsilon}\}}e^{-2c|\xi|^2t}(\xi^{\alpha})^2\|\hat{f}_0\|^2d\xi\nnm\\
&\leq C\sup_{\xi\in\R^3}\|\xi^{\alpha'}\hat{f}_0\|^2\int_{\{|\xi|\leq1\}}e^{-2c|\xi|^2t}|\xi|^{2|\alpha-\alpha'|}d\xi+Ce^{-2ct}\int_{\{|\xi|\geq1\}}(\xi^{\alpha})^2\|\hat{f}_0\|^2d\xi\nnm\\
&\leq C(1+t)^{-\frac{3+2m}{2}}\big(\|\partial^{\alpha}_xf_0\|^2_{L^2}+\|\partial^{\alpha'}_xf_0\|^2_{L^{2,1}}\big),\label{lnsp4}
\ema
where $m=|\alpha-\alpha'|$ and $c>0$ is a constant.

By \eqref{s111}, we can obtain
\bma
\int_{\{|\xi|\leq\frac{r_0}{\epsilon}\}}\|\xi^{\alpha}P_1S_1(t,\xi,\epsilon)\hat{f}_0\|^2d\xi&\leq C\epsilon^{2}\int_{\{|\xi|\leq\frac{r_0}{\epsilon}\}}e^{-2c|\xi|^2t}|\xi|^2(\xi^{\alpha})^2\|\hat{f}_0\|^2d\xi\nnm\\
&\leq C\epsilon^2(1+t)^{-\frac{5+2m}{2}}\big(\|\partial^{\alpha}_xf_0\|^2_{H^1}+\|\partial^{\alpha'}_xf_0\|^2_{L^{2,1}}\big),\label{lnsp5}
\ema
where we had used the fact that $P_1\psi_j(\xi,\eps)=O(\eps|\xi|)$. Combining \eqref{lnsp1}-\eqref{lnsp5}, we prove \eqref{ns321} and \eqref{ns322}.

If $P_0f_0=0$, then it holds that for $\epsilon|\xi|\leq r_0$,
\be\label{s222}
S_1(t,\xi,\epsilon)\hat{f}_0=\mathrm{i} \epsilon\sum^{3}_{j=-1}e^{\frac{-\mathrm{i}|\xi|u_j t}{\epsilon}-d_j t+O(\epsilon|\xi|^3)t} \big(\hat{f}_0, P_1\overline{\psi_j(\xi,\eps)} \big) \psi_j(\xi,\eps) .
\ee
Thus, by Lemmas \ref{3dmvpbsp10}-\ref{3dmvpbsp12}, \eqref{s111} and \eqref{s222}, we have
\bma
\int_{\{|\xi|\leq\frac{r_0}{\epsilon}\}}\|\xi^{\alpha}P_0S_1(t,\xi,\epsilon)\hat{f}_0\|^2d\xi&\leq C\epsilon^{2}\int_{\{|\xi|\leq\frac{r_0}{\epsilon}\}}e^{-2c|\xi|^2t}|\xi|^2(\xi^{\alpha})^2\|\hat{f}_0\|^2d\xi\nnm\\
&\leq C\epsilon^{2}(1+t)^{-\frac{5+2m}{2}}\big(\|\partial^{\alpha}_xf_0\|^2_{H^1}+\|\partial^{\alpha'}_xf_0\|^2_{L^{2,1}}\big),\label{lnsp8}\\
\int_{\{|\xi|\leq\frac{r_0}{\epsilon}\}}\|\xi^{\alpha}P_1S_1(t,\xi,\epsilon)\hat{f}_0\|^2d\xi&\leq C\int_{\{|\xi|\leq\frac{r_0}{\epsilon}\}}\epsilon^{4}e^{-2c|\xi|^2t}|\xi|^4(\xi^{\alpha})^2\|\hat{f}_0\|^2d\xi\nnm\\
&\leq C\epsilon^4(1+t)^{-\frac{7+2m}{2}}\big(\|\partial^{\alpha}_xf_0\|^2_{H^2}+\|\partial^{\alpha'}_xf_0\|^2_{L^{2,1}}\big),\label{lnsp9}
\ema
where we had used the fact that $P_1\psi_j(\xi,\eps)=O(\eps|\xi|)$. Combining \eqref{lnsp1} and \eqref{lnsp8}-\eqref{lnsp9}, we obtain \eqref{ns325}-\eqref{ns326}.
\end{proof}


We now prepare three lemmas \ref{3dmvpbfas4}-\ref{3dmvpbfas5} that will be used to study the fluid dynamical approximation of the semigroup $e^{\frac{tB_{\epsilon}}{\epsilon^2}}$.
\begin{lem}\label{3dmvpbfas4}
For any functions $\phi(r)$ and $\varrho(r)$ satisfying $\big|\phi^{(k)}(r)\big|\leq C(1+r)^{-2-k-\delta}$ and $| (\frac{1}{\varrho'(r)})^{(k)}|\leq C(1+r)^{-k}$  $(k=0,1)$ for any $\delta>0$, we have
$$
\bigg|\int_{\mathbb{R}^3}e^{\mathrm{i}x\cdot\xi}e^{\mathrm{i}\vartheta\varrho(|\xi|)}\alpha(\omega)\phi(|\xi|)d\xi\bigg|\leq C|\vartheta|^{-1},
$$
where $\alpha(\omega)$ is a smooth function for $\omega=\frac{\xi}{|\xi|}\in\S^2$ and $\vartheta\in\R$.
\end{lem}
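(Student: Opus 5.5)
Pass to polar coordinates $\xi=r\omega$ with $r=|\xi|$ and $\omega\in\S^2$:
$$
I(x,\vartheta)=\int_0^\infty e^{\mathrm{i}\vartheta\varrho(r)}\phi(r)r^2\Big(\int_{\S^2}e^{\mathrm{i}r x\cdot\omega}\alpha(\omega)d\omega\Big)dr=:\int_0^\infty e^{\mathrm{i}\vartheta\varrho(r)}\phi(r)r^2A(rx)\,dr .
$$
The estimate is trivial for $|\vartheta|\le1$, because $|\phi(r)|r^2\le C(1+r)^{-\delta}$ and $|A|\le C$. Even so, the integral $\int_0^\infty(1+r)^{-\delta}dr$ diverges, so an integration by parts is needed for every $\vartheta$. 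This also suggests reading $\delta$ as a fixed positive number (we may assume $0<\delta<1$) rather than arbitrary.

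The main step is to integrate by parts in $r$ using the identity $e^{\mathrm{i}\vartheta\varrho}=\frac{1}{\mathrm{i}\vartheta\varrho'(r)}\partial_r e^{\mathrm{i}\vartheta\varrho}$. The boundary term at $r=0$ vanishes because of the factor $r^2$, and the term at infinity vanishes by the decay of $\phi$. This gives
$$
I=-\frac{1}{\mathrm{i}\vartheta}\int_0^\infty e^{\mathrm{i}\vartheta\varrho}\,\partial_r\Big(\frac{1}{\varrho'(r)}\phi(r)r^2A(rx)\Big)dr .
$$

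When the derivative falls on $1/\varrho'$, $\phi$ or $r^2$, the hypotheses give a bound $C(1+r)^{-1-\delta}$, which is integrable and uniform in $x$. This uses $|(1/\varrho')'|\le C(1+r)^{-1}$, $|\phi'|\le C(1+r)^{-3-\delta}$ and $|(r^2)'|\le 2r$.

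The obstacle is the term where the derivative hits $A(rx)$, since $\partial_r A(rx)=\int_{\S^2}\mathrm{i}(x\cdot\omega)e^{\mathrm{i}rx\cdot\omega}\alpha\,d\omega$ grows like $|x|$. To control it I would use the stationary phase decay of the sphere integral. For smooth $\alpha$, the standard estimate (rotate so that $x=|x|e_3$, integrate in $\cos\theta$, and integrate by parts once more) gives
$$
|A(y)|\le C(1+|y|)^{-1},\qquad |\partial_rA(rx)|\le C|x|(1+r|x|)^{-1}.
$$
The same computation shows that $\partial_r A(rx)$ equals $r^{-1}$ times a sum of boundary terms of the form $e^{\pm\mathrm{i}r|x|}$ plus an $O((r|x|)^{-1}\cdot r^{-1})$ remainder. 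So I would bound $|\partial_r A(rx)|\le C\min\{|x|,r^{-1}\}$. Combined with $\frac{1}{|\varrho'|}|\phi|r^2\le C(1+r)^{-2-\delta}r^2$, this term is bounded by
$$
\int_0^\infty (1+r)^{-\delta}\min\{|x|r,1\}\frac{dr}{r}\cdot r\ \ \text{-type integrals}\ \le C\int_0^\infty (1+r)^{-2-\delta}r^2\min\{|x|,r^{-1}\}dr\le C\int_0^\infty (1+r)^{-1-\delta}dr,
$$
which is uniform in $x$. If that bound turns out to be too crude, the fallback is to keep the oscillating boundary factor $e^{\pm \mathrm{i}r|x|}$ from the sphere integral and absorb it into the phase. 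Any remaining terms are either absolutely integrable or would get one more integration by parts.

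Putting these together gives $|I|\le C|\vartheta|^{-1}$ uniformly in $x$. The delicate point I expect is verifying the uniform bound on $\partial_rA(rx)$ near $r|x|\sim1$ and for large $r|x|$. This is where the smoothness of $\alpha$ on $\S^2$ is used: via the coarea decomposition of the sphere in the direction of $x$, with $\alpha$'s derivatives controlling the boundary terms.
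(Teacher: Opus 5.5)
Your proposal is correct and follows essentially the paper's own route. The paper also passes to polar coordinates, integrates by parts once in $r$ via $e^{\mathrm{i}\vartheta\varrho}=(\mathrm{i}\vartheta\varrho')^{-1}\partial_r e^{\mathrm{i}\vartheta\varrho}$, and uses the sphere-integral bounds $|g(x,r)|\le C(1+|x|r)^{-1}$ and $|\partial_r g(x,r)|\le C|x|(1+|x|r)^{-1}$; the latter is equivalent to your $C\min\{|x|,r^{-1}\}$. The paper quotes these bounds from Lemma 3.7 of \cite{Yang-4} rather than deriving them by rotating $x$ to $|x|e_3$ as you do.

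The only blemishes are expository. The opening claim that $|\vartheta|\le1$ is trivial is false, but you retract it correctly, and the integration by parts covers every $\vartheta\neq0$. The ``fallback'' is unnecessary, since $r(1+r)^{-2-\delta}\le(1+r)^{-1-\delta}$ already closes the estimate uniformly in $x$.
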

\begin{proof}
Firstly, note that
\bq\label{lmfs-4-1}
\int_{\mathbb{R}^3}e^{\mathrm{i}x\cdot\xi}e^{\mathrm{i}\vartheta\varrho(|\xi|)}\alpha(\omega)\phi(|\xi|)d\xi=\int_{0}^{+\infty}e^{\mathrm{i}\vartheta\varrho(r)}g(x,r)\phi(r)r^2dr,
\eq
where
$$
g(x,r)=\int_{\mathbb{S}^2}e^{\mathrm{i}rx\cdot\omega}\alpha(\omega)d\omega.
$$
By Lemma 3.7 in \cite{Yang-4}, we have
$$
|g(x,r)|\leq C(1+|x|r)^{-1},\quad \big|\partial_rg(x,r)\big|\leq C|x|(1+|x|r)^{-1}.
$$
Thus,
\bmas
&\quad\int_{\mathbb{R}^3}e^{\mathrm{i}x\cdot\xi}e^{\mathrm{i}\vartheta\varrho(|\xi|)}\alpha(\omega)\phi(|\xi|)d\xi\nnm\\
&=\frac{1}{\mathrm{i}\vartheta}\int_0^{+\infty}g(x,r)\phi(r)\frac1{\varrho'(r) }r^2de^{\mathrm{i}\vartheta\varrho(r)}\nnm\\
&=\frac{1}{\mathrm{i}\vartheta}e^{\mathrm{i}\vartheta\varrho(r)}g(x,r)\phi(r)\frac{r^2}{\varrho'(r) }\bigg|^{+\infty}_0-\frac{1}{\mathrm{i}\vartheta}\int_0^{+\infty}e^{\mathrm{i}\vartheta\varrho(r)}\phi(r)\frac{r^2}{\varrho'(r) }\partial_rg(x,r)dr\nnm\\
&\quad-\frac{1}{\mathrm{i}\vartheta}\int_0^{+\infty}e^{\mathrm{i}\vartheta\varrho(r)}g(x,r)\bigg(\frac{r^2}{\varrho'(r) } \phi'(r)+\phi(r) \frac{\varrho''(r)r^2}{\varrho'(r)^2 }+2\frac{\phi(r)r}{\varrho'(r) }\bigg)dr,
\emas
which yields
$$
\bigg|\int_{\mathbb{R}^3}e^{\mathrm{i}x\cdot\xi}e^{\mathrm{i}\vartheta\varrho(|\xi|)}\alpha(\omega)\phi(|\xi|)d\xi\bigg|\leq C|\vartheta|^{-1}.
$$
And this completes the proof of the lemma.
\end{proof}

\begin{lem}\label{3dmvpbfas4-1}
There exists a constant $C>0$ such that
$$
\bigg\|\int_{\R^3}e^{\mathrm{i}x\cdot\xi}e^{-d_j(|\xi|)t}d\xi\bigg\|_{L^1_x}\le C,
$$
where $d_j(|\xi|)$, $j=-1,0,1,2,3$ is given by \eqref{sp4}.
\end{lem}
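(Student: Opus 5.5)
The plan is to reduce the claim to a bound that is uniform in $t$ by a parabolic rescaling, and then to control the $L^1_x$ norm by $L^2$ norms of the symbol and its Hessian.

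\textbf{Structure of the symbols.} By \eqref{sp4}, each $d_j$ has the form
$$
d_j(r)=r^2h_j(r^2),
$$
where $h_j$ is a rational function of $s\ge0$ with positive denominators ($5s+8$ or constants). For instance, $h_2=h_3=\kappa_0$ and $h_0(s)=\kappa_1\frac{3s+6}{5s+8}$. Hence $h_j$ is smooth on $[0,\infty)$, and there are constants $c_1,C>0$ with
$$
c_1\le h_j(s)\le C,\qquad |h_j^{(k)}(s)|\le C(1+s)^{-1-k},\quad k=1,2.
$$
Positivity of $h_j$ follows from $\kappa_0,\kappa_1>0$ via \eqref{kapa}; both terms of $d_{\pm1}$ are nonnegative and the first is strictly positive.

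\textbf{Scaling.} Substitute $\xi=\eta/\sqrt t$. Then
$$
\int_{\R^3}e^{\mathrm{i}x\cdot\xi}e^{-d_j(|\xi|)t}d\xi=t^{-3/2}K_t\Big(\frac{x}{\sqrt t}\Big),\qquad K_t(y)=\int_{\R^3}e^{\mathrm{i}y\cdot\eta}m_t(\eta)d\eta,
$$
where $m_t(\eta)=\exp\big(-|\eta|^2h_j(|\eta|^2/t)\big)$. The $L^1_x$ norm is invariant under this change of variables, so it suffices to show $\sup_{t>0}\|K_t\|_{L^1}<\infty$. For this I use Cauchy--Schwarz with the weight $(1+|y|^2)^{-2}$, which is integrable on $\R^3$, together with Plancherel:
$$
\|K_t\|_{L^1}\le C\|(1+|y|^2)K_t\|_{L^2}\le C\big(\|m_t\|_{L^2}+\|\Delta_\eta m_t\|_{L^2}\big).
$$

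\textbf{Uniform symbol bounds (the main step).} The key is to differentiate $\Psi_t(\eta):=|\eta|^2h_j(|\eta|^2/t)$ without losing powers of $t$. Put $s=|\eta|^2/t$. Then
$$
\nabla_\eta\Psi_t=2\eta\,h_j(s)+|\eta|^2h_j'(s)\frac{2\eta}{t}.
$$
The second term is bounded by $2|\eta|\,\frac{s}{(1+s)^2}\le C|\eta|$, so $|\nabla\Psi_t|\le C|\eta|$. The same bookkeeping gives $|\nabla^2\Psi_t|\le C$: every factor $1/t$ appears paired with $|\eta|^2$ (producing $s$) or with $h_j^{(k)}(s)$, and the decay $(1+s)^{-1-k}$ absorbs the resulting powers of $s$.

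Since $\Psi_t\ge c_1|\eta|^2$, we get
$$
|m_t|+|\Delta m_t|\le C(1+|\eta|^2)e^{-c_1|\eta|^2}
$$
uniformly in $t>0$. Its $L^2$ norm is a fixed constant, which proves the lemma. The derivative bookkeeping in this step is where care is needed; everything else follows directly once those bounds hold.
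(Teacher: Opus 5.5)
Your proof is correct, but it follows a different route from the paper's.

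\textbf{The two routes.} The paper works pointwise. It asserts $|\partial_\xi^\alpha e^{-d_j(|\xi|)t}|\le Ct^{|\alpha|/2}(1+|\xi|^2t)^{|\alpha|/2}e^{-c|\xi|^2t}$ for all $\alpha$, integrates in $\xi$ to bound $|x^\alpha\mathcal{F}^{-1}(e^{-d_jt})|$, and deduces the kernel estimate $Ct^{-3/2}(1+|x|^2/t)^{-2}$. That step uses four $\xi$-derivatives. You instead rescale and use the weighted Plancherel (Carlson--Beurling type) bound $\|K\|_{L^1}\le C\|(1-\Delta_\eta)m\|_{L^2}$, which in three dimensions needs only two.

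\textbf{What each buys.} Where the paper's route is valid, it gives a stronger pointwise kernel bound. Yours gives only the $L^1$ bound, which is all the lemma claims, and it holds uniformly in $t>0$ with no extra work.

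\textbf{Why stopping at two derivatives matters.} Your scaling makes this visible. With $\Psi_t(\eta)=t\,d_j(|\eta|/\sqrt t)$, one has $\nabla_\eta^k\Psi_t=t^{1-k/2}(\nabla^kd_j)(\eta/\sqrt t)$. This is controlled uniformly in $t$ for $k\le2$. For $k=3,4$ and $j\in\{0,\pm1\}$ it blows up as $t\to0$, because $d_j$ is then not a pure quadratic. Accordingly, the paper's derivative bound holds only for $|\alpha|\le2$ or $t\ge1$. For example, at $\xi=0$ one computes $\partial_{\xi_1}^4e^{-d_0(|\xi|)t}=\frac94\kappa_1t+\frac{27}{4}\kappa_1^2t^2$, which is not $O(t^2)$. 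So the paper's argument needs a supplement for $t<1$, for instance using the decay of $\nabla^kd_j$ at large $|\xi|$. Your argument needs no such supplement.

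\textbf{One small correction.} The first term of $d_{\pm1}$ involves $(L^{-1}P_1(v_1\chi_1),v_1\chi_1)$, which is not one of the constants in \eqref{kapa}. Its strict negativity comes from the coercivity \eqref{Lf} together with $P_1(v_1\chi_1)\neq0$.
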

\begin{proof}
For any $\alpha\in\N^3$ and $j=-1,0,1,2,3$, by \eqref{sp4} we obtain
$$
\Big|\partial_{\xi}^{\alpha}e^{-d_j(|\xi|)t}\Big|\leq Ct^{\frac{|\alpha|}{2}}\big(1+|\xi|^2t\big)^{\frac{|\alpha|}{2}}e^{-c|\xi|^2t},
$$
where $c,C>0$ are two constants. This implies that
$$
\bigg|x^{\alpha}\int_{\R^3}e^{\mathrm{i}x\cdot\xi}e^{-d_j(|\xi|)t}d\xi\bigg|\leq C\int|\partial_{\xi}^{\alpha}e^{-d_j(|\xi|)t}|d\xi
\leq Ct^{-\frac{3}{2}+\frac{|\alpha|}{2}},$$
which gives
$$
\big|\mathcal{F}^{-1}\big(e^{-d_j(|\xi|)t}\big)\big|\leq Ct^{-\frac{3}{2}}\(1+\frac{|x|^2}{t}\)^{-n}, \quad \forall n\geq2.
$$
This proves the lemma.
\end{proof}

\begin{lem}\label{3dmvpbfas5}
For any $f_0\in N_0$, we have
\bq\label{f-sS-2}
\|S_2(t,\xi,\epsilon)f_0\|_{\xi}\leq C\big(\epsilon|\xi|1_{\{\epsilon|\xi|\leq r_0\}}+1_{\{\epsilon|\xi|\geq r_0\}}\big)e^{-\frac{\sigma_0 t}{\epsilon^2}}\|f_0\|_{\xi}.
\eq
\end{lem}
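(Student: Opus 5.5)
Since $S_2=S-S_1$ and Lemma \ref{3dmvpbsp12} already gives $\|S_2(t,\xi,\epsilon)f_0\|_{\xi}\le Ce^{-\frac{\sigma_0t}{\epsilon^2}}\|f_0\|_{\xi}$ for all $\xi$, the region $\epsilon|\xi|\geq r_0$ needs nothing more. All the work is in $\epsilon|\xi|\le r_0$, where I must gain the extra factor $\epsilon|\xi|$ when $f_0\in N_0$.

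For $\epsilon|\xi|\le r_0$ I will represent $S_2$ through the resolvent. By Lemma \ref{3dmvpbsp6} and the standard semigroup argument underlying Lemma \ref{3dmvpbsp12} (as in \cite{Li-4}), one has
$$
S_2(t,\xi,\epsilon)f_0=\frac{1}{2\pi\mathrm{i}}\int_{-\sigma_0'-\mathrm{i}\infty}^{-\sigma_0'+\mathrm{i}\infty}e^{\frac{\lambda t}{\epsilon^2}}\big(\lambda-B_\epsilon(\xi)\big)^{-1}f_0\,d\lambda ,
$$
after the five eigenprojections have been removed by residues, with $\sigma_0'\in(0,\mu/2)$. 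The key idea is to compare with $\epsilon=0$, i.e.\ with the operator $L$. Its spectrum in $\mathrm{Re}\lambda>-\mu$ is only the eigenvalue $0$ with eigenspace $N_0$, so $e^{tL/\epsilon^2}f_0=f_0$ for $f_0\in N_0$, and the corresponding remainder vanishes identically. Accordingly I write
$$
(\lambda-B_\epsilon(\xi))^{-1}=(\lambda-L)^{-1}+(\lambda-B_\epsilon(\xi))^{-1}\Big(-\mathrm{i}\epsilon(v\cdot\xi)-\mathrm{i}\epsilon\tfrac{v\cdot\xi}{1+|\xi|^2}P_d\Big)(\lambda-L)^{-1}.
$$
On $N_0$ we have $(\lambda-L)^{-1}f_0=\lambda^{-1}f_0$. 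The first term therefore contributes only the residue at $0$, which cancels against the $\epsilon=0$ limit of the spectral part. The second term carries the explicit factor $\epsilon|\xi|$, since $(v\cdot\xi)f_0$ has norm at most $C|\xi|\|f_0\|$ for $f_0\in N_0$ because of the Gaussian decay. What remains is the contour integral of $(\lambda-B_\epsilon(\xi))^{-1}g$ with $g=\lambda^{-1}\epsilon(\ldots)f_0$, minus its five eigenprojection residues.

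For this I plan to use the cleaner equivalent form
$$
S_2f_0=S(t)f_0-\sum_j e^{\frac{t\lambda_j}{\epsilon^2}}P_jf_0 ,\qquad P_j=(\cdot,\overline{\psi_j})_\xi\psi_j .
$$
I then bound this with the same contour and resolvent bounds on the line $\mathrm{Re}\lambda=-\sigma_0'$ as used in Lemma \ref{3dmvpbsp12}, applied to the data $\epsilon|\xi|$-small perturbation instead of $f_0$. Concretely, the comparison reduces to estimating $(I-\sum_jP_j)f_0$. By Lemma \ref{3dmvpbsp10}, $P_0\psi_j=E_j+O(\epsilon|\xi|)$ and $P_1\psi_j=O(\epsilon|\xi|)$, and $\{E_j\}$ is $(\cdot,\cdot)_\xi$-orthonormal and spans $N_0$. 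Hence
$$
\Big\|\Big(I-\sum_jP_j\Big)f_0\Big\|_\xi\le C\epsilon|\xi|\|f_0\|_\xi .
$$
Since $S_2=S(I-\sum P_j)$ because the $P_j$ are spectral projections commuting with $S$, Lemma \ref{3dmvpbsp12} applied to $h=(I-\sum P_j)f_0$ gives
$$
\|S_2f_0\|_\xi=\|S_2h\|_\xi\le Ce^{-\frac{\sigma_0t}{\epsilon^2}}\|h\|_\xi\le C\epsilon|\xi|e^{-\frac{\sigma_0t}{\epsilon^2}}\|f_0\|_\xi .
$$
This short route is what I will actually use; the resolvent expansion above is a fallback.

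The main obstacle is justifying that $\{E_j(\xi)\}$ is a $(\cdot,\cdot)_\xi$-orthonormal basis of $N_0$ uniformly in $\xi$, including small $|\xi|$. This follows from \eqref{dlEUJ} after rotation by $\mathbb{O}$. One also needs the $O(\epsilon|\xi|)$ remainders in \eqref{egf1} to be uniform, so that $\|f_0-\sum_j(f_0,E_j)_\xi E_j\|_\xi=0$ together with the perturbation gives the bound above. Finally, the commutation $S_2P_j=0$ must be checked, which holds because $S_1$ is exactly the spectral part.
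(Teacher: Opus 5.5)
Your short route is correct and is essentially the paper's own proof. The paper also identifies $S_1=S\,1_{\{\epsilon|\xi|\le r_0\}}P_\epsilon(\xi)$ with $P_\epsilon(\xi)=\sum_j(\cdot,\overline{\psi_j})_\xi\psi_j$, so that $S_2=S(I-P_\epsilon)$ decays exponentially on $\epsilon|\xi|\le r_0$, and then combines the orthonormal basis $\{E_j(\xi)\}$ of $N_0$ with \eqref{egf1} to get $\|(I-P_\epsilon)f_0\|_\xi\le C\epsilon|\xi|\|f_0\|_\xi$; idempotence of $I-P_\epsilon$ closes the argument exactly as your $S_2f_0=S_2h$ step does. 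The differences are cosmetic: the paper checks $SP_\epsilon=S_1$ via the inverse Laplace transform rather than directly from $S\psi_j=e^{t\lambda_j/\epsilon^2}\psi_j$, and your resolvent-expansion fallback is not needed.
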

\begin{proof}
Define a projection $P_{\epsilon}(\xi)$ by
$$
P_{\epsilon}(\xi)f=\sum^3_{j=-1}\big(f,\overline{\psi_j(\xi,\epsilon)}\big)_{\xi}\psi_j(\xi,\epsilon),\quad \forall f\in L^2\big(\R^3_v\big),
$$
where $\psi_j(\xi,\epsilon)$, $j=-1,0,1,2,3$ are the eigenfunctions of $B_{\epsilon}(\xi)$ defined by \eqref{egf1} for $\epsilon|\xi|\leq r_0$.

By Lemma \ref{3dmvpbsp12}, we can assert that
\bq
S_1(t,\xi,\epsilon)=e^{\frac{tB_{\epsilon}(\xi)}{\epsilon^2}}1_{\{\epsilon|\xi|\leq r_0\}}P_{\epsilon}(\xi).
\eq
Indeed, it follows from semigroup theory  that for $\epsilon|\xi|\leq r_0$ and $\kappa>0$,
\bmas
e^{\frac{tB_{\epsilon}(\xi)}{\epsilon^2}}P_{\epsilon}(\xi)f&=\frac{1}{2\pi\mathrm{i}}\int_{\kappa-\mathrm{i}\infty}^{\kappa+\mathrm{i}\infty}e^{\frac{t\lambda}{\epsilon^2}}(\lambda-B_{\epsilon}(\xi))^{-1}P_{\epsilon}(\xi)fd\lambda\\
&=\frac{1}{2\pi\mathrm{i}}\sum^3_{j=-1}\int_{\kappa-\mathrm{i}\infty}^{\kappa+\mathrm{i}\infty}e^{\frac{t\lambda}{\epsilon^2}}(\lambda-\lambda_j(|\xi|,\epsilon))^{-1}d\lambda\big(f,\overline{\psi_j(\xi,\epsilon)}\big)_{\xi}\psi_j(\xi,\epsilon)\\
&=\sum^3_{j=-1}e^{\frac{t\lambda_j(|\xi|,\epsilon)}{\epsilon^2}}\big(f,\overline{\psi_j(\xi,\epsilon)}\big)_{\xi}\psi_j(\xi,\epsilon)=S_1(t,\xi,\epsilon)f.
\emas

By \eqref{se3}, we have
\bq\label{f-sS-0}
S_2(t,\xi,\epsilon)=S_{21}(t,\xi,\epsilon)+S_{22}(t,\xi,\epsilon),
\eq
where
\bmas
S_{21}(t,\xi,\epsilon)&=S(t,\xi,\epsilon)1_{\{\epsilon|\xi|\leq r_0\}}\(I-P_{\epsilon}(\xi)\),\\
S_{22}(t,\xi,\epsilon)&=S(t,\xi,\epsilon)1_{\{\epsilon|\xi|\geq r_0\}}.
\emas
It holds that
\bq\label{f-sS-j}
\|S_{2j}(t,\xi,\epsilon)g\|_{\xi}\leq Ce^{-\frac{\sigma_0t}{\epsilon^2}}\|g\|_{\xi},\quad \forall g\in L^2\big(\R^3_v\big),\quad j=1,2.
\eq

Since $E_j(\xi)$, $j=-1,0,1,2,3$ are the orthonormal basis of $N_0$, it follows that for any $f_0\in N_0$,
$$
f_0-P_{\epsilon}(\xi)f_0=\sum^3_{j=-1}\big(f_0,E_j(\xi)\big)_{\xi}E_j(\xi)-\sum^3_{j=-1}\big(f_0,\overline{\psi_j(\xi,\epsilon)}\big)_{\xi}\psi_j(\xi,\epsilon),
$$
which together with \eqref{egf1} gives rise to
\bq\label{f-sS-1}
\|S_{21}(t,\xi,\epsilon)f_0\|_{\xi}\leq C\epsilon|\xi|1_{\{\epsilon|\xi|\leq r_0\}}e^{-\frac{\sigma_0t}{\epsilon^2}}\|f_0\|_{\xi}, \quad \forall f_0\in N_0.
\eq
By combining \eqref{f-sS-0}-\eqref{f-sS-1}, we obtain \eqref{f-sS-2}.
\end{proof}

Now we are going to estimate the first and second order expansions of the semigroup $e^{\frac{tB_{\epsilon}}{\epsilon^2}}$.
\begin{lem}\label{3dmvpbfas6}
For any $\epsilon\in (0,1)$ and any $f_0\in L^2(\R^3_x\times \R^3_v),$ we have
\be
\big\|e^{\frac{tB_{\epsilon}}{\epsilon^2}}f_0-V(t)P_0f_0\big\|_{L^{\infty}} \leq C\bigg(\epsilon(1+t)^{-2}+\(1+\frac{t}{\epsilon}\)^{-1}\bigg)\(\|f_0\|_{H^{3}}+\|f_0\|_{W^{3,1}}\),\label{lsp2j1}
\ee
where $V(t)$ is given in \eqref{lns4j1}, and $C>0$ is a constant independent of $\epsilon$. Moreover, if $f_0$ satisfies \eqref{F01}, then
\bq\label{lsp2j2}
\big\|e^{\frac{tB_{\epsilon}}{\epsilon^2}}f_0-V(t)P_0f_0\big\|_{L^{\infty}}\leq C\epsilon(1+t)^{-2}\(\|f_0\|_{H^{3}}+\|f_0\|_{W^{3,1}}\).
\eq
\end{lem}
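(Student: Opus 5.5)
We work on the Fourier side and bound the $L^\infty_x(L^2_v)$ norm by the $L^1_\xi$ norm of the transform, except for the oscillating acoustic modes. There we use Lemma \ref{3dmvpbfas4}, which gives the decay $(t/\epsilon)^{-1}$.

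By Lemma \ref{3dmvpbsp12} we split
\[
e^{\frac{tB_\epsilon(\xi)}{\epsilon^2}}\hat f_0=S_1(t,\xi,\epsilon)\hat f_0+S_2(t,\xi,\epsilon)\hat f_0 .
\]
The $S_2$ part is controlled by \eqref{se3}:
\[
\int\|S_2\hat f_0\|\,d\xi\le Ce^{-\sigma_0t/\epsilon^2}\int(1+|\xi|)^{-2}(1+|\xi|)^2\|\hat f_0\|\,d\xi\le Ce^{-\sigma_0t/\epsilon^2}\|f_0\|_{H^3}.
\]
Since $e^{-\sigma_0t/\epsilon^2}\le C(1+t/\epsilon)^{-1}$, this is acceptable. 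The term $V(t)P_0f_0$ on the region $\epsilon|\xi|\ge r_0$ gives $Ce^{-ct/\epsilon^2}$ after using the $H^3$ weight, which is also acceptable.

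On $\epsilon|\xi|\le r_0$ we expand $S_1$ using Lemma \ref{3dmvpbsp10}. The factor is $e^{t\lambda_j/\epsilon^2}=e^{-\i|\xi|u_jt/\epsilon-d_jt}e^{O(\epsilon|\xi|^3)t}$, and $\psi_j=E_j+O(\epsilon|\xi|)$. This yields the decomposition
\[
S_1\hat f_0=\sum_{j=0,2,3}e^{-d_jt}(P_0\hat f_0,E_j)_\xi E_j+\sum_{j=\pm1}e^{-\i|\xi|u_jt/\epsilon-d_jt}(P_0\hat f_0,E_j)_\xi E_j+R_\epsilon ,
\]
where the first sum is exactly $V(t,\xi)P_0\hat f_0$. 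The remainder $R_\epsilon$ collects three kinds of errors:
\begin{itemize}
\item[(a)] terms coming from $P_1\hat f_0$ and $P_1\psi_j$, which are $O(\epsilon|\xi|)$;
\item[(b)] the replacement error $|e^{O(\epsilon|\xi|^3)t}-1|\le C\epsilon|\xi|^3t\,e^{\cdot}$;
\item[(c)] a term that is not a gain: $(P_1\hat f_0,P_1\psi_j)$ multiplied by $P_1\psi_j$ is $O(\epsilon^2|\xi|^2)$.
\end{itemize}
The term $P_1\hat f_0$ itself enters only through $S_2$, and through $S_1$ with an $\epsilon|\xi|$ factor.

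Each part of $R_\epsilon$ carries a factor $\epsilon|\xi|$ or $\epsilon|\xi|^3t$ times $e^{-c|\xi|^2t}$. Integrating over $\xi$ with $\|\hat f_0\|\le \|f_0\|_{L^{2,1}}$ at low frequency gives $\epsilon(1+t)^{-2}$, since $\int|\xi|e^{-c|\xi|^2t}d\xi\sim t^{-2}$. The high-frequency tail costs $(1+|\xi|)^{3}$ weights, absorbed by $\|f_0\|_{W^{3,1}}$ and $\|f_0\|_{H^3}$. The $W^{3,1}$ norm is used to control $\sup_\xi(1+|\xi|)^3\|\hat f_0\|$, so that the integral $\int (1+|\xi|)^{-3-\delta}$ converges. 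Short time $t\le1$ is handled by $\epsilon|\xi|\le r_0$, which makes $\epsilon|\xi|^3t\,e^{-c|\xi|^2t}$ integrable against the $W^{3,1}$ weight.

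The main obstacle is the oscillating part $j=\pm1$. It does not decay in $t$ at rate $(1+t)^{-2}$ and must be shown to be $O((1+t/\epsilon)^{-1})$ in $L^\infty_x$. For $t\le\epsilon$ we simply bound it by $C\|f_0\|_{W^{3,1}}$. For $t\ge\epsilon$ we write $(P_0\hat f_0,E_j)_\xi E_j$ in terms of $\hat f_0$ convolved in $x$ with a symbol $\alpha(\omega)\phi(|\xi|)$, using that $E_{\pm1}$ depends on $\omega$ and $|\xi|$ smoothly. We factor out $(1+|\xi|)^{-3}$ and apply Lemma \ref{3dmvpbfas4} with $\vartheta=t/\epsilon$ and $\varrho(r)=-ru_j(r)$. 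We need $|1/\varrho'|$ and its derivative bounded appropriately; this holds since $\varrho'(r)\approx\sqrt{5/3}$ is bounded away from zero, a point to verify from \eqref{sp4}. The factor $e^{-d_j t}$ is inserted via Lemma \ref{3dmvpbfas4-1} as an $L^1_x$ convolution kernel, and the remaining $(1+|\xi|)^3\hat f_0$ as an $L^1_x$ function from $W^{3,1}$. Young's inequality then gives $C(t/\epsilon)^{-1}(\|f_0\|_{W^{3,1}}+\|f_0\|_{H^3})$. The cutoff $1_{\{\epsilon|\xi|\le r_0\}}$ is replaced by a smooth cutoff, with the difference placed in the exponentially small high-frequency part.

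Under \eqref{F01}, $P_1f_0=0$ and the relations give $(P_0\hat f_0,E_{\pm1})_\xi=0$. Indeed, $\xi\cdot\hat m_0=0$ kills the $\chi_1$-component in $E_{\pm1}$. The density–temperature relation makes $P_0\hat f_0$ parallel to $E_0$, which is orthogonal to $E_{\pm1}$ in $(\cdot,\cdot)_\xi$. The oscillating term therefore vanishes. Lemma \ref{3dmvpbfas5} replaces the $S_2$ bound by the one carrying the factor $\epsilon|\xi|$, which turns $e^{-\sigma_0t/\epsilon^2}$ into $\epsilon(1+t)^{-2}$-type terms. This gives \eqref{lsp2j2}.
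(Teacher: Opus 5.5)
Your proposal is essentially the paper's proof. It uses the same split into the $S_2$ part, the high-frequency part of $V(t)P_0f_0$, the eigen-expansion remainder of size $\epsilon|\xi|+\epsilon|\xi|^3t$, and the acoustic modes $j=\pm1$ handled by Lemma~\ref{3dmvpbfas4} with $\vartheta=t/\epsilon$, Lemma~\ref{3dmvpbfas4-1} and Young's inequality, and under \eqref{F01} the oscillating term vanishes and Lemma~\ref{3dmvpbfas5} handles $S_2$.

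The only thing to fix is the short-time norm bookkeeping. The bound $\sup_\xi(1+|\xi|)^3\|\hat f_0\|\le C\|f_0\|_{W^{3,1}}$ alone gives a logarithmically divergent $\xi$-integral for the acoustic part as $t\to0$, and it loses the factor $\epsilon$ in the remainder for $t\le1$. In those places use Cauchy--Schwarz against $\|f_0\|_{H^2}$ or $\|f_0\|_{H^3}$, as the paper does. The same applies to the high-frequency part of $V(t)P_0f_0$ under \eqref{F01}, which also needs a factor $\epsilon$ that you did not record, e.g.\ from $1\le\epsilon|\xi|/r_0$.
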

\begin{proof}
First, we prove \eqref{lsp2j1} as follows. By Lemma \ref{3dmvpbsp12}, we have
\bma\label{F8-1-1}
\big\|e^{\frac{tB_{\epsilon}}{\epsilon^2}}f_0-V(t)P_0f_0\big\|
&=\bigg\|\int_{\mathbb{R}^3}e^{\mathrm{i}x\cdot\xi}\(e^{\frac{tB_{\epsilon}(\xi)}{\epsilon^2}}\hat{f}_0-V(t,\xi)P_0\hat{f}_0\)d\xi\bigg\|\nonumber\\
&\leq \bigg\|\int_{\{|\xi|\leq\frac{r_0}{\epsilon}\}}e^{\mathrm{i}x\cdot\xi}(S_1(t,\xi,\epsilon)\hat{f}_0-V(t,\xi)P_0\hat{f}_0)d\xi\bigg\|\nonumber\\
&\quad+\int_{\{|\xi|\geq\frac{r_0}{\epsilon}\}}\|V(t,\xi)P_0\hat{f}_0\|d\xi+\int_{\mathbb{R}^3}\|S_2(t,\xi,\epsilon)\hat{f}_0\|d\xi\nonumber\\
&=:I_1+I_2+I_3.
\ema
We estimates $I_j$, $j=1,2,3$ as follows. By Lemma \ref{3dmvpbsp10}-\ref{3dmvpbsp12}, we have
\bq
S_1(t,\xi,\epsilon)\hat{f}_0=\sum^3_{j=-1}e^{\frac{-\mathrm{i}|\xi|u_j t}{\epsilon}-d_j t+O(\epsilon|\xi|^3)t}\(\big(P_0\hat{f}_0,E_j(\xi)\big)_{\xi}E _j(\xi)+O(\epsilon|\xi|)\),
\eq
which leads to
\bma\label{F6}
I_1\leq&\sum^3_{j=-1}\int_{\{|\xi|\leq\frac{r_0}{\epsilon}\}}\Big\|e^{\frac{-\mathrm{i}|\xi|u_jt}{\epsilon}-d_jt+O(\epsilon|\xi|^3)t}\(\big(P_0\hat{f}_0,E_j(\xi)\big)_{\xi}E _j(\xi)+O(\epsilon|\xi|)\)\nonumber\\
&-e^{\frac{-\mathrm{i}|\xi|u_jt}{\epsilon}-d_jt}\big(P_0\hat{f}_0,E_{j}(\xi)\big)_{\xi}E_{j}(\xi)\Big\|d\xi\nonumber\\
&+\sum_{j=-1,1}\bigg\|\int_{\{|\xi|\leq\frac{r_0}{\epsilon}\}}e^{\mathrm{i}x\cdot\xi}e^{\frac{-\mathrm{i}|\xi|u_jt}{\epsilon}-d_jt}\big(P_0\hat{f}_0,E_j(\xi)\big)_{\xi}E_j(\xi)d\xi\bigg\|\nonumber\\
=:&I_{11}+I_{12}.
\ema

For $I_{11}$, it follows from \eqref{sp4} and \eqref{egf1-1} that
\bma\label{F7}
I_{11}&\leq C\epsilon\int_{\{|\xi|\leq\frac{r_0}{\epsilon}\}}e^{-c|\xi|^2t}(|\xi|^3t\|\hat{f}_0\|_{\xi}+|\xi|\|\hat{f}_0\|_{\xi})d\xi\nnm\\
&\leq C\epsilon\sup_{|\xi|\leq1}\|\hat{f}_0\|\int_{\{|\xi|\leq1\}}e^{-c|\xi|^2t}(|\xi|+|\xi|^3t)d\xi\nnm\\
&\quad+C\epsilon\(\int_{\{|\xi|>1\}}e^{-c|\xi|^2t}\frac{(1+|\xi|^2t)^2}{(1+|\xi|^2)^2}d\xi\)^{\frac{1}{2}}\(\int_{\{|\xi|>1\}}(1+|\xi|^2)^2|\xi|^2\|\hat{f}_0\|^2d\xi\)^{\frac{1}{2}}\nnm\\
&\leq C\epsilon(1+t)^{-2}\(\|f_0\|_{H^3}+\|f_0\|_{L^{2,1}}\),
\ema
where $c>0$ is a constant, and we have used
$$
\int_{\R^3}\frac{(\xi^{\alpha})^2}{1+|\xi|^2}\big|\big(\hat{f}_0,\chi_0\big)\big|^2d\xi\leq \int_{\R^3}(\xi^{\alpha})^2\big|\big(\hat{f}_0,\chi_0\big)\big|^2d\xi\leq \|\partial^{\alpha}_xf_0\|^2_{L^2}.
$$

Then, we estimate $I_{12}$ as follows.
\bma
I_{12}&=\sum_{j=-1,1}\bigg\|\bigg(\int_{\R^3}-\int_{\{|\xi|>\frac{r_0}{\epsilon}\}}\bigg)e^{\mathrm{i}x\cdot\xi}e^{\frac{-\mathrm{i}|\xi|u_jt}{\epsilon}-d_jt}\(P_0\hat{f}_0,E_j(\xi)\)_{\xi}E_j(\xi)d\xi\bigg\|\nnm\\
&\leq\sum_{j=-1,1}\bigg\|\int_{\R^3}e^{\mathrm{i}x\cdot\xi}\hat{H}_j(t,\xi)d\xi\bigg\|+\sum_{j=-1,1}\bigg\|\int_{\{|\xi|>\frac{r_0}{\epsilon}\}}e^{\mathrm{i}x\cdot\xi}\hat{H}_j(t,\xi)d\xi\bigg\|\nnm\\
&=:I_{12}^1+I_{12}^2,\label{F7-1}
\ema
where
$$
\hat{H}_j(t,\xi)=e^{\frac{-\mathrm{i}|\xi|u_jt}{\epsilon}-d_jt}\(P_0\hat{f}_0,E_j(\xi)\)_{\xi}E_j(\xi),\quad j=\pm1.
$$
For $I_{12}^1$ and $I_{12}^2$, we have
\bma
I_{12}^1&\leq C\int_{\R^3}e^{-c|\xi|^2t}\|P_0\hat{f}_0\|_{\xi}d\xi\nnm\\
&\leq C\(\int_{\R^3}e^{-2c|\xi|^2t}\frac{1}{(1+|\xi|^2)^2}d\xi\)^{\frac{1}{2}}\(\int_{\R^3}(1+|\xi|^2)^2\|\hat{f}_0\|^2d\xi\)^{\frac{1}{2}}\nnm\\
&\leq C\|f_0\|_{H^{2}},\label{F7-2}\\
I_{12}^2&\leq Ce^{-\frac{cr_0^2t}{\epsilon^2}}\int_{\{|\xi|>\frac{r_0}{\epsilon}\}}\|P_0\hat{f}_0\|_{\xi}d\xi\nnm\\
&\leq Ce^{-\frac{cr_0^2t}{\epsilon^2}}\bigg(\int_{\R^3}\frac{1}{(1+|\xi|^2)^2}d\xi\bigg)^{\frac{1}{2}}\bigg(\int_{\R^3}(1+|\xi|^2)^2\|\hat{f}_0\|_{\xi}^2d\xi\bigg)^{\frac{1}{2}}\nnm\\
&\leq Ce^{-\frac{cr_0^2t}{\epsilon^2}}\|f_0\|_{H^2}.\label{F7-3}
\ema

By \eqref{egf1-1}, we have
\bma
&\quad\big(P_0\hat{f}_0,E_j(\xi)\big)_{\xi}E_{j}(\xi)\nnm\\
&=\bigg[\sqrt{\frac{3}{2}}\frac{|\xi|^2+2}{5|\xi|^2+8}\hat{n}_0+\frac{|\xi|^2+1}{5|\xi|^2+8}\hat{q}_0-\sqrt{\frac{|\xi|^2+1}{5|\xi|^2+8}}\frac{j\sqrt{2}\hat{m}_0\cdot\omega}{2}\bigg]\bigg(\sqrt{\frac{3}{2}}\chi_0+\chi_4\bigg)\nnm\\
&\quad-\frac{j\sqrt{2}}{2}\bigg[\sqrt{\frac{3}{2}}\frac{|\xi|^2+2}{\sqrt{(|\xi|^2+1)(5|\xi|^2+8)}}\hat{n}_0+\sqrt{\frac{|\xi|^2+1}{5|\xi|^2+8}}\hat{q}_0-\frac{j\sqrt{2}\hat{m}_0\cdot\omega}{2}\bigg]\omega\cdot v\chi_0,\label{F7-4}
\ema
where $j=\pm1$, and
$$
\big(\hat{n}_0,\hat{m}_0,\hat{q}_0\big)=\big((\hat{f}_0,\chi_0),(\hat{f}_0,v\chi_0),(\hat{f}_0,\chi_4)\big).
$$
Thus,
\bma
\hat{H}_j&= \bigg(\sqrt{\frac{3}{2}}\hat{G}_{j}^{11}\hat{\mathcal{C}}_j\hat{F}_0+\hat{G}_{j}^{12}\hat{\mathcal{C}}_j\hat{F}_2-\frac{j\sqrt{2}}{2}\hat{G}_{j}^{23}\hat{\mathcal{C}}_j\cdot\hat{F}_1\bigg)\bigg(\sqrt{\frac{3}{2}}\chi_0+\chi_4\bigg) \nnm\\
&\quad-\bigg(\frac{j\sqrt{3}}{2}\hat{G}_{j}^{24}\hat{\mathcal{C}}_j\hat{F}_0+\frac{j\sqrt{2}}{2}\hat{G}_{j}^{23}\hat{\mathcal{C}}_j\hat{F}_2-\frac{1}{2}\hat{G}_{j}^{35}\hat{\mathcal{C}}_j\hat{F}_1\bigg)\cdot v\chi_0,\label{F7-6}
\ema
where $j=-1,1$, and
\bq\label{F7-6-1}
\left\{\bal
\hat{G}_{j}^{kl}(t,\xi)=e^{\frac{ \mathrm{i}|\xi|u_jt}{\epsilon}}\alpha_k(\omega)\frac{\mathcal{B}_l(|\xi|)}{(1+|\xi|)^{3}}, \quad  k=1,2,3,\,\, l=1,2,3,4,5,\\
\hat{\mathcal{C}}_j(t,\xi)=e^{-d_jt},\\
(\hat{F}_0,\hat{F}_1,\hat{F}_2)=(1+|\xi|)^3(\hat{n}_0,\hat{m}_0,\hat{q}_0),\\
\alpha_1(\omega)=1,\quad \alpha_2(\omega)=\omega,\quad \alpha_3(\omega)=\omega\otimes\omega,\\
\mathcal{B}_1(\xi)=\frac{|\xi|^2+2}{5|\xi|^2+8},\quad \mathcal{B}_2(\xi)=\frac{|\xi|^2+1}{5|\xi|^2+8},\quad \mathcal{B}_3(\xi)=\sqrt{\frac{|\xi|^2+1}{5|\xi|^2+8}},\\
\mathcal{B}_4(\xi)=\frac{|\xi|^2+2}{\sqrt{(|\xi|^2+1)(5|\xi|^2+8)}},\quad \mathcal{B}_5(\xi)=1.
\ea\right.
\eq
Thus, by Lemmas \ref{3dmvpbfas4}-\ref{3dmvpbfas4-1}, we have
\bma
I_{12}^1
&\leq   C\(\|G_{j}^{11}\|_{L^{\infty}_x}+\|G_{j}^{12}\|_{L^{\infty}_x}+\|G_{j}^{23}\|_{L^{\infty}_x}\)\|\mathcal{C}_j\|_{L^1_x}\|(F_0,F_1,F_2)\|_{W^{3,1}_x}\nnm\\
&\quad +C\(\|G_{j}^{23}\|_{L^{\infty}_x}+\|G_{j}^{24}\|_{L^{\infty}_x}+\|G_{j}^{35}\|_{L^{\infty}_x}\)\|\mathcal{C}_j\|_{L^1_x}\|(F_0,F_1,F_2)\|_{W^{3,1}_x} \nnm\\
&\leq C\(\frac{t}{\epsilon}\)^{-1}\|f_0\|_{W^{3,1}}.\label{F7-7}
\ema
By combining \eqref{F7-2}-\eqref{F7-3} and \eqref{F7-7}, we have
\bq
I_{12}\leq C\(1+\frac{t}{\epsilon}\)^{-1}\(\|f_0\|_{H^3}+\|f_0\|_{W^{3,1}}\),\label{F8}
\eq
which together with \eqref{F6} and \eqref{F7} implies that
\bq\label{F9}
I_1\leq C\bigg(\epsilon(1+t)^{-2}+\bigg(1+\frac{t}{\epsilon}\bigg)^{-1}\bigg)\(\|f_0\|_{H^3}+\|f_0\|_{W^{3,1}}\).
\eq

By Lemma \ref{3dmvpbsp12} and \eqref{lnsp10}, we have
\bma
I_2&\le C\eps \int_{\{|\xi|\geq\frac{r_0}{\epsilon}\}}e^{-c|\xi|^2t}|\xi|\|P_0\hat{f}_0\|_{\xi}d\xi \nnm\\
&\leq C\eps e^{-\frac{cr_0^2t}{\epsilon^2}}\(\int_{\{|\xi|\geq\frac{r_0}{\epsilon}\}}\frac{1}{(1+|\xi|^2)^2}d\xi\)^{\frac{1}{2}}\(\int_{\{|\xi|\geq\frac{r_0}{\epsilon}\}}(1+|\xi|^2)^3 \| \hat{f}_0\|^2d\xi\)^{\frac{1}{2}}\nonumber\\
&\leq C\eps e^{-\frac{cr_0^2t}{\epsilon^2}}\|f_0\|_{H^3},\label{F10}\\
I_3&\leq Ce^{-\frac{\sigma_0 t}{\epsilon^2}}\(\int_{\R^3}\frac{1}{(1+|\xi|^2)^2}d\xi\)^{\frac{1}{2}}\(\int_{\R^3}(1+|\xi|^2)^2\|\hat{f}_0\|_{\xi}^2d\xi\)^{\frac{1}{2}}\nnm\\
&\leq Ce^{-\frac{\sigma_0 t}{\epsilon^2}}\|f_0\|_{H^2}.\label{F11}
\ema
Therefore, it follows from \eqref{F9}-\eqref{F11} that
\be\label{F12}
\big\|e^{\frac{tB_{\epsilon}}{\epsilon^2}}f_0-V(t)P_0f_0\big\|_{L^{\infty}}\leq C\bigg(\epsilon(1+t)^{-2}+\(1+\frac{t}{\epsilon}\)^{-1}\bigg)\(\|f_0\|_{H^3}+\|f_0\|_{W^{3,1}}\).
\ee
Thus, we prove \eqref{lsp2j1}.

Finally, we turn to show \eqref{lsp2j2}. If $f_0$ satisfies \eqref{F01}, then we have
$$
\big(P_0\hat{f}_0,E_j(\xi)\big)_{\xi}=0,\quad j=\pm1,
$$
which implies that $I_{12}=0$. The term $I_{11}$ satisfies \eqref{F7}. It follows from Lemma \ref{3dmvpbfas5}  that
\bma
I_3&\leq C\epsilon\int_{\{|\xi|\leq\frac{r_0}{\epsilon}\}}e^{-\frac{\sigma_0 t}{\epsilon^2}}|\xi|\|\hat{f}_0\|d\xi +C\epsilon\int_{\{|\xi|\geq\frac{r_0}{\epsilon}\}}e^{-\frac{\sigma_0 t}{\epsilon^2}}|\xi|\|\hat{f}_0\|d\xi\nnm\\
&\leq C\eps e^{-\frac{\sigma_0 t}{\epsilon^2}}\|f_0\|_{H^3}. \label{I3}
\ema
Thus, by combining \eqref{F10} and \eqref{I3},  we can obtain \eqref{lsp2j2}. The proof of the lemma is completed.
\end{proof}


\begin{remark}
From Lemma \ref{3dmvpbfas6}, we have
$$
\|e^{\frac{tB_{\epsilon}}{\epsilon^2}}P_0f_0-V(t)P_0f_0-u_{\epsilon}^{osc}(t)\|_{L^{\infty}}\leq C\epsilon(1+t)^{-2}\(\|f_0\|_{H^{3}}+\|f_0\|_{W^{3,1}}\),
$$
where $u_{\epsilon}^{osc}(t)=u_{\epsilon}^{osc}(t,x,v)$ is the high oscillation part of $e^{\frac{tB_{\epsilon}}{\epsilon^2}}f_0$ defined by \eqref{dlxg-uosc}.
\end{remark}


\begin{lem}\label{3dmvpbfas7}
For any $\epsilon\in(0,1)$ and any $f_0\in L^2$ satisfying $P_0f_0=0$, we have
\bma
&\quad\bigg\|\frac{1}{\epsilon}e^{\frac{tB_{\epsilon}}{\epsilon^2}}f_0-V(t)P_0(v\cdot\nabla_xL^{-1}f_0)\bigg\|_{L^{\infty}}\nnm\\
&\leq C\bigg(\epsilon(1+t)^{-\frac{5}{2}}+\(1+\frac{t}{\epsilon}\)^{-1}+\frac{1}{\epsilon}e^{-\frac{\sigma_0 t}{\epsilon^2}}\bigg)\(\|f_0\|_{H^{4}}+\|f_0\|_{W^{4,1}}\).
\ema
where $V(t)$ is given in \eqref{lns4j1},  and $C>0$ is a constant independent of $\epsilon$.
\end{lem}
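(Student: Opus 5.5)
Following the proof of Lemma \ref{3dmvpbfas6}, I would split in Fourier space as
$$
\frac{1}{\epsilon}e^{\frac{tB_{\epsilon}}{\epsilon^2}}f_0-V(t)P_0(v\cdot\nabla_xL^{-1}f_0)=I_1+I_2+I_3,
$$
where $I_1$ is the low-frequency part $\epsilon|\xi|\leq r_0$ with $\frac1\epsilon S_1(t,\xi,\epsilon)\hat f_0-V(t,\xi)P_0(\mathrm{i}(v\cdot\xi)L^{-1}\hat f_0)$, $I_2$ is $V(t,\xi)P_0(\mathrm{i}(v\cdot\xi)L^{-1}\hat f_0)$ restricted to $|\xi|\geq r_0/\epsilon$, and $I_3=\frac1\epsilon\int\|S_2(t,\xi,\epsilon)\hat f_0\|d\xi$. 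By \eqref{se3} and the same Cauchy--Schwarz argument as in \eqref{F11}, $I_3\le C\epsilon^{-1}e^{-\sigma_0t/\epsilon^2}\|f_0\|_{H^2}$, which accounts for the last term in the bound. For $I_2$, $e^{-d_j t}\le e^{-cr_0^2t/\epsilon^2}$ on this region, and $|\xi|\le \epsilon|\xi|^2/r_0$ lets me trade the extra factor $|\xi|$ for one more derivative. So $I_2\le C\epsilon e^{-ct/\epsilon^2}\|f_0\|_{H^4}\le C\epsilon(1+t)^{-5/2}\|f_0\|_{H^4}$.

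The main work is $I_1$. Since $P_0f_0=0$, only $P_1\overline{\psi_j}$ contributes to the coefficients, as in \eqref{s222}. By Lemma \ref{3dmvpbsp10}, $P_1\psi_j(-\xi)$-type terms give
$$
\big(\hat f_0,\overline{\psi_j(\xi,\epsilon)}\big)_\xi=\epsilon\big(\hat f_0,\mathrm{i}L^{-1}P_1((v\cdot\xi)E_j(\xi))\big)+O(\epsilon^2|\xi|^2)\|\hat f_0\|.
$$
Using self-adjointness of $L^{-1}$ on $N_0^\perp$, the leading term equals $\epsilon\,(\mathrm{i}(v\cdot\xi)L^{-1}\hat f_0,E_j(\xi))_\xi$, up to the sign and conjugation convention, which I would verify carefully. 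The $P_d$ part vanishes, since $P_d E_j$ pairs with $\hat f_0$ only through $P_1$ terms. Hence
$$
\frac1\epsilon S_1\hat f_0=\sum_{j=-1}^3 e^{\frac{-\mathrm{i}|\xi|u_jt}{\epsilon}-d_jt+O(\epsilon|\xi|^3)t}\Big(\big(P_0(\mathrm{i}(v\cdot\xi)L^{-1}\hat f_0),E_j\big)_\xi E_j+O(\epsilon|\xi|^2)\|\hat f_0\|\Big).
$$
I would then split $I_1=I_{11}+I_{12}$ exactly as in \eqref{F6}:
\begin{itemize}
\item The error terms $I_{11}$ involve $O(\epsilon|\xi|^3t)$ from the exponent and $O(\epsilon|\xi|^2)$ from the eigenfunctions. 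Each carries one more power of $|\xi|$ than in \eqref{F7}. The low-frequency part then gives $\epsilon\int_{|\xi|\le1}e^{-c|\xi|^2t}(|\xi|^2+|\xi|^4t)d\xi\lesssim\epsilon(1+t)^{-5/2}$ times $\sup\|\hat f_0\|\le\|f_0\|_{L^{2,1}}$. The high-frequency part costs one more derivative, hence $H^4$.
\item The $j=0,2,3$ terms with the exact exponentials reproduce $V(t,\xi)P_0(\mathrm{i}(v\cdot\xi)L^{-1}\hat f_0)$ on $\epsilon|\xi|\le r_0$.
\item The oscillating terms $j=\pm1$ form $I_{12}$.
\end{itemize}

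For $I_{12}$ I would follow \eqref{F7-4}--\eqref{F7-7}. Write $(P_0(\mathrm{i}(v\cdot\xi)L^{-1}\hat f_0),E_j)_\xi E_j$ as a combination of products of a multiplier $e^{\mathrm{i}|\xi|u_jt/\epsilon}\alpha_k(\omega)\mathcal{B}_l(|\xi|)(1+|\xi|)^{-3}$, the heat-type factor $e^{-d_jt}$, and $(1+|\xi|)^3$ times the Fourier transform of the moments of $v\cdot\nabla_xL^{-1}f_0$. Lemma \ref{3dmvpbfas4}, with $\varrho(r)=ru_j(r)$, gives the $L^\infty_x$ bound $C(t/\epsilon)^{-1}$ for the first factor. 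Here $|(1/\varrho')^{(k)}|\le C(1+r)^{-k}$ holds because $\varrho'(r)$ is bounded away from zero, and I expect checking this is the main technical point. Lemma \ref{3dmvpbfas4-1} gives the $L^1_x$ bound for the heat factor. The third factor is controlled in $L^1_x$ by $\|f_0\|_{W^{4,1}}$, since $\nabla_x$ plus three derivatives make four. Removing the region $|\xi|>r_0/\epsilon$ costs $e^{-ct/\epsilon^2}\|f_0\|_{H^3}$, and combined with the trivial $H^3$ bound this yields $C(1+t/\epsilon)^{-1}(\|f_0\|_{H^4}+\|f_0\|_{W^{4,1}})$. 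Summing the estimates on $I_1$, $I_2$ and $I_3$ gives the claimed bound.
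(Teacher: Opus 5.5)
Your proposal is correct and follows the paper's proof essentially step by step. It uses the same three-way split (the low-frequency $S_1$ part, the high-frequency part of $V(t)$, and $\epsilon^{-1}S_2$), the same expansion of $\frac{1}{\epsilon}S_1\hat f_0$ using $P_0f_0=0$, the same $I_{11}$ estimate via the extra power of $|\xi|$, and the same treatment of the $j=\pm1$ modes through Lemmas \ref{3dmvpbfas4} and \ref{3dmvpbfas4-1}.

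The differences are minor. Your $I_2$ bound gains a factor $\epsilon$, where the paper simply uses $e^{-cr_0^2t/\epsilon^2}\|f_0\|_{H^3}$. You also make explicit the check, left implicit in the paper, that $\varrho'(r)=\big(\frac{5}{3}+(1+r^2)^{-2}\big)/u_j(r)$ stays away from zero. Finally, your intermediate coefficient formula has a sign slip: the correct leading term is $\mathrm{i}\epsilon\int\hat f_0\,L^{-1}P_1((v\cdot\xi)E_j)\,dv$. This does not affect your final coefficient $\big(\mathrm{i}(v\cdot\xi)L^{-1}\hat f_0,E_j\big)$, which is correct.
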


\begin{proof}
By Lemma \ref{3dmvpbsp12}, we obtain
\bma
&\quad\bigg\|\frac{1}{\epsilon}e^{\frac{tB_{\epsilon}}{\epsilon^2}}f_0-V(t)P_0(v\cdot\nabla_xL^{-1}f_0)\bigg\|\nonumber\\
&\leq \bigg\|\int_{\{|\xi|\leq\frac{r_0}{\epsilon}\}}e^{\mathrm{i}x\cdot\xi}\(\frac{1}{\epsilon}S_1(t,\xi,\epsilon)\hat{f}_0-V(t,\xi)P_0(\mathrm{i}v\cdot\xi L^{-1}\hat{f}_0)\)d\xi\bigg\|\nonumber\\
&\quad+\int_{\{|\xi|\geq\frac{r_0}{\epsilon}\}}\big\|V(t,\xi)P_0(\mathrm{i}v\cdot\xi L^{-1}\hat{f}_0)\big\|d\xi+\int_{\R^3}\bigg\|\frac{1}{\epsilon}S_2(t,\xi,\epsilon)\hat{f}_0\bigg\|d\xi\nonumber\\
&=:I_1+I_2+I_3.\label{fas7-0}
\ema
We estimate $I_j$, $j=1,2,3$ as follows. By Lemmas \ref{3dmvpbsp10}-\ref{3dmvpbsp12}, for any $f_0\in L^{2}$ satisfying $P_0f_0=0$, we have
\bq
S_1(t,\xi,\epsilon)\hat{f}_0=\mathrm{i}\epsilon\sum^3_{j=-1}e^{\frac{-\mathrm{i}|\xi|u_jt}{\epsilon}-d_jt+O(\epsilon|\xi|^3)t}\(\big(v\cdot\xi L^{-1}\hat{f}_0,E_j(\xi)\big)E_j(\xi)+O(\epsilon|\xi|^2)\),
\eq
which leads to
\bma
I_1&\leq \sum^3_{j=-1}\int_{\{|\xi|\leq \frac{r_0}{\epsilon}\}}\Big\|e^{\frac{-\mathrm{i}|\xi|u_jt}{\epsilon}-d_jt+O(\epsilon|\xi|^3)t}\(\big(v\cdot\xi L^{-1}\hat{f}_0,E_j(\xi)\big)E_j(\xi)+O(\epsilon|\xi|^2)\)\nnm\\
&\quad-e^{\frac{-\mathrm{i}|\xi|u_jt}{\epsilon}-d_jt}\big(v\cdot\xi L^{-1}\hat{f}_0,E_{j}(\xi)\big)E_{j}(\xi)\Big\|d\xi\nnm\\
&\quad+\sum_{j=-1,1}\bigg\|\int_{\{|\xi|\leq\frac{r_0}{\epsilon}\}}e^{\mathrm{i}x\cdot\xi}e^{\frac{-\mathrm{i}|\xi|u_jt}{\epsilon}-d_jt}\big(v\cdot\xi L^{-1}\hat{f}_0,E_j(\xi)\big)E_j(\xi)d\xi\bigg\|\nnm\\
&=:I_{11}+I_{12}.\label{fas7-1}
\ema

For $I_{11}$, it hold that
\bma
I_{11}&\leq C\epsilon\int_{\{|\xi|\leq\frac{r_0}{\epsilon}\}}e^{-c|\xi|^2t}\(|\xi|^3t\|P_0(v\cdot\xi L^{-1}\hat{f}_0)\|+|\xi|^2\|\hat{f}_0\|\)d\xi\nnm\\
&\leq C\epsilon(1+t)^{-\frac{5}{2}}\(\|f_0\|_{H^4}+\|f_0\|_{L^{2,1}}\).\label{fas7-1-1}
\ema
Then, we estimate $I_{12}$ as follows.
\bma
I_{12}&\leq\sum_{j=-1,1}\bigg\|\int_{\R^3}e^{\mathrm{i}x\cdot\xi}\hat{J}_j(t,\xi)d\xi\bigg\|+\sum_{j=-1,1}\bigg\|\int_{\{|\xi|>\frac{r_0}{\epsilon}\}}e^{\mathrm{i}x\cdot\xi}\hat{J}_j(t,\xi)d\xi\bigg\|\nnm\\
&=:I_{12}^1+I_{12}^2,\label{fas7-1-2}
\ema
where
$$
\hat{J}_j(t,\xi)=e^{\frac{-\mathrm{i}|\xi|u_jt}{\epsilon}-d_jt}\big(v\cdot\xi L^{-1}\hat{f}_0,E_j(\xi)\big)E_j(\xi),\quad j=\pm1.
$$
For $I_{12}^j$, $j=1,2$, we obtain by  a similar argument as \eqref{F7-2}-\eqref{F7-3} that
\bma
I_{12}^1&\leq C\int_{\R^3}e^{-c|\xi|^2t}|\xi|\|\hat{f}_0\|d\xi\leq C\|f_0\|_{H^{3}},\label{fas7-1-3}\\
I_{12}^2&\leq Ce^{-\frac{cr_0^2t}{\epsilon^2}}\int_{\{|\xi|>\frac{r_0}{\epsilon}\}}|\xi|\|\hat{f}_0\|d\xi\leq Ce^{-\frac{cr_0^2t}{\epsilon^2}}\|f_0\|_{H^3}.\label{fas7-1-4}
\ema
By \eqref{egf1-1} and $(v\cdot\xi L^{-1}\hat{f}_0,\chi_0)=0$, we have
\bma
&\quad\big(v\cdot\xi L^{-1}\hat{f}_0,E_j(\xi)\big)E_{j}(\xi)\nnm\\
&=\bigg[\frac{|\xi|^2+1}{5|\xi|^2+8}(v\cdot\xi L^{-1}\hat{f}_0,\chi_4)-\sqrt{\frac{|\xi|^2+1}{5|\xi|^2+8}}\frac{j\sqrt{2}(v\cdot\xi L^{-1}\hat{f}_0,v\chi_0)\cdot\omega}{2}\bigg]\bigg(\sqrt{\frac{3}{2}}\chi_0+\chi_4\bigg)\nnm\\
&\quad-\frac{j\sqrt{2}}{2}\bigg[\sqrt{\frac{|\xi|^2+1}{5|\xi|^2+8}}(v\cdot\xi L^{-1}\hat{f}_0,\chi_4)-\frac{j\sqrt{2}(v\cdot\xi L^{-1}\hat{f}_0,v\chi_0)\cdot\omega}{2}\bigg]\omega\cdot v\chi_0, \quad j=\pm 1. \label{fas7-1-5}
\ema
Thus,
\be
 \hat{J}_j = \bigg(\hat{G}_{j}^{12}\hat{\mathcal{C}}_j\hat{F}_4-\frac{j\sqrt{2}}{2}\hat{G}_{j}^{23}\hat{\mathcal{C}}_j\cdot\hat{F}_5\bigg)\bigg(\sqrt{\frac{3}{2}}\chi_0+\chi_4\bigg)
 -\bigg(\frac{j\sqrt{2}}{2}\hat{G}_{j}^{23}\hat{\mathcal{C}}_j\hat{F}_4-\frac{1}{2}\hat{G}_{j}^{35}\hat{\mathcal{C}}_j\hat{F}_5\bigg)\cdot v\chi_0,\label{fas7-1-7}
\ee
where $\hat{G}_{j}^{kl}$, $\hat{\mathcal{C}}_j$ ($j=\pm1$) are given in \eqref{F7-6-1}, and
$$
(\hat{F}_4,\hat{F}_5)=(1+|\xi|)^3\big((v\cdot\xi L^{-1}\hat{f}_0,\chi_4),(v\cdot\xi L^{-1}\hat{f}_0,v\chi_0)\big).
$$
Thus, by Lemmas \ref{3dmvpbfas4}-\ref{3dmvpbfas4-1}, we have
 \bma
I_{12}^1
&\leq C\(\|G_{j}^{12}\|_{L^{\infty}_x}+\|G_{j}^{23}\|_{L^{\infty}_x}+\|G_{j}^{35}\|_{L^{\infty}_x}\)\|\mathcal{C}_j\|_{L^1_x}\|(F_4,F_5)\|_{W^{3,1}_x}\nnm\\
&\leq C\(\frac{t}{\epsilon}\)^{-1}\|f_0\|_{W^{4,1}}.\label{fas7-1-8}
\ema
By combining \eqref{fas7-1-3}-\eqref{fas7-1-4} and \eqref{fas7-1-8}, we have
\bq
I_{12}\leq C\(1+\frac{t}{\epsilon}\)^{-1}\(\|f_0\|_{H^4}+\|f_0\|_{W^{4,1}}\),\label{fas7-2}
\eq
which together with \eqref{fas7-1}-\eqref{fas7-1-1} and \eqref{fas7-2} implies that
\bq\label{fas7-3}
I_1\leq C\bigg(\epsilon(1+t)^{-\frac{5}{2}}+\(1+\frac{t}{\epsilon}\)^{-1}\bigg)\(\|f_0\|_{H^4}+\|f_0\|_{W^{4,1}}\).
\eq

By Lemma \ref{3dmvpbsp12} and \eqref{lnsp10}, we have
\bma
I_2&\leq Ce^{-\frac{cr_0^2t}{\epsilon^2}}\(\int_{\{|\xi|\geq\frac{r_0}{\epsilon}\}}\frac{1}{(1+|\xi|^2)^2}d\xi\)^{\frac{1}{2}}\(\int_{\{|\xi|\geq\frac{r_0}{\epsilon}\}}(1+|\xi|^2)^2|\xi|^2\|\hat{f}_0\|^2d\xi\)^{\frac{1}{2}}\nonumber\\
&\leq Ce^{-\frac{cr_0^2t}{\epsilon^2}}\|f_0\|_{H^3},\label{fas7-4}\\
I_3&\leq Ce^{-\frac{\sigma_0 t}{\epsilon^2}}\frac{1}{\epsilon}\(\int_{\R^3}\frac{1}{(1+|\xi|^2)^2}d\xi\)^{\frac{1}{2}}\(\int_{\R^3}(1+|\xi|^2)^2\|\hat{f}_0\|^2d\xi\)^{\frac{1}{2}}\nnm\\
&\leq C\frac{1}{\epsilon}e^{-\frac{\sigma_0 t}{\epsilon^2}}\|f_0\|_{H^2}.\label{fas7-5}
\ema
Therefore, it follows from \eqref{fas7-0} and \eqref{fas7-3}-\eqref{fas7-5} that
\bmas
&\quad\bigg\|\frac{1}{\epsilon}e^{\frac{tB_{\epsilon}}{\epsilon^2}}f_0-V(t)P_0\big(v\cdot\nabla_xL^{-1}f_0\big)\bigg\|_{L^{\infty}}\nnm\\
&\leq C\bigg(\epsilon(1+t)^{-\frac{5}{2}}+\(1+\frac{t}{\epsilon}\)^{-1}+\frac{1}{\epsilon}e^{-\frac{\sigma_0 t}{\epsilon^2}}\bigg)\(\|f_0\|_{H^{4}}+\|f_0\|_{W^{4,1}}\).
\emas
The proof of the lemma is completed.
\end{proof}


\section{Diffusion Limit}\setcounter{equation}{0}
\label{sect4}

In this section, we study the diffusion limit of the solution to the nonlinear mVPB system \eqref{Pm1}-\eqref{Pm3} based on the fluid approximations of the semigroup given in Section \ref{sect3}. The solution $f_{\epsilon}(t)=f_{\epsilon}(t,x,v)$ to the mVPB system \eqref{Pm1}-\eqref{Pm3} can be represented by
\bma
f_{\epsilon}(t)=e^{\frac{tB_{\epsilon}}{\epsilon^2}}f_0+\int_0^te^{\frac{(t-s)B_{\epsilon}}{\epsilon^2}}\(G_1 +\frac{1}{\epsilon}G_2 +\frac{1}{\epsilon^2}v\sqrt{M}\cdot\nabla_x(I-\Delta_x)^{-1}G_3 \)(s)ds,\label{nldh1}
\ema
where the nonlinear terms $G_1(f_{\epsilon})$, $G_2(f_{\epsilon})$ and $G_3(f_{\epsilon})$ are defined in \eqref{3dmvpbnlt}.

Let $(n,m,q)(t,x)$ be the global solution to the NSPF system \eqref{NS1}-\eqref{NS5}. Then by Lemma \ref{3dmvpbfas1}, $u(t,x,v)=n(t,x)\chi_0+m(t,x)\cdot v\chi_0+q(t,x)\chi_4$ can be represented by
\bq
u(t)=V(t)P_0f_0+\int^t_0V(t-s)\big(Z_1(u)+\div_xZ_2(u)\big)(s)ds,\label{nlnsdh1}
\eq
where
\bma
&Z_1(u)=(n\nabla_x\phi)\cdot v\chi_0+\sqrt{\frac{2}{3}}(m\cdot\nabla_x\phi)\chi_4,\label{nlnsz1}\\
&Z_2(u)=-(m\otimes m)\cdot v\chi_0-\frac{5}{3}(qm)\chi_4.\label{nlnsz2}
\ema

\subsection{Energy estimate}

Let $N$ and $k$ be two positive integers. For hard sphere model and hard potential model, define two functionals $E_{N,k}(f_{\epsilon})$ and $D_{N,k}(f_{\epsilon})$ by
\bmas
&E_{N,k}(f_{\epsilon})=\sum_{|\alpha|+|\beta|\leq N}\big\|w_k\partial_x^{\alpha}\partial_v^{\beta}f_{\epsilon}\big\|^2_{L^2}+\sum_{|\alpha|\leq N}\big\|\partial_x^{\alpha}\phi_{\epsilon}\big\|^2_{H^1_x},\\
&D_{N,k}(f_{\epsilon})=\sum_{|\alpha|+|\beta|\leq N}\frac{1}{\epsilon^2}\big\|\nu^{\frac{1}{2}}w_k\partial_x^{\alpha}\partial_v^{\beta}P_1f_{\epsilon}\big\|^2_{L^2}+\sum_{|\alpha|\leq N-1}\(\big\|\partial_x^{\alpha}\nabla_xP_0f_{\epsilon}\big\|^2_{L^2}+\big\|\partial_x^{\alpha}\nabla_x\phi_{\epsilon}\big\|_{H^1_x}\),
\emas
where $w_k=w_k(v)$ is given by \eqref{hsw} for hard sphere model, and $w_k=w_k(t,v)$ is given by \eqref{hpw} for hard potential model. By the similar argument as \cite{Duan-3,Guo-2,Li-4}, we have the following lemma.
\begin{lem}\label{rbedl-ee2}
For $N\geq4$ and any $\epsilon\in(0,1)$, there exists a equivalent energy functional $\mathcal{E}_{N,1}(\cdot)\sim E_{N,1}(\cdot)$ with $w_1=w_1(v)$ is given by \eqref{hsw} for hard sphere model, and $w_1=w_1(t,v)$ is given by \eqref{hpw} for hard potential model, if $E_{N,1}(f_0)$ is sufficiently small, then the  mVPB system \eqref{Pm1}-\eqref{Pm3} admits a unique global solution $f_{\epsilon}=f_{\epsilon}(t,x,v)$  satisfying
\be
\Dt\mathcal{E}_{N,1}(f_{\epsilon}(t))+D_{N,1}(f_{\epsilon}(t))\leq 0.
\ee
\end{lem}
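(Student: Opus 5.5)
The plan is to follow the standard weighted energy method of \cite{Duan-3,Guo-2,Li-4}, made uniform in $\epsilon$. Local existence comes from a standard iteration. The global result then follows by continuity once we prove an a priori estimate of the form
\[
\Dt\mathcal{E}_{N,1}(f_{\epsilon})+D_{N,1}(f_{\epsilon})\le C\big(\mathcal{E}_{N,1}(f_{\epsilon})^{1/2}+\mathcal{E}_{N,1}(f_{\epsilon})\big)D_{N,1}(f_{\epsilon})
\]
for solutions with $\sup_t E_{N,1}(f_{\epsilon}(t))\le\delta$ small. Smallness then absorbs the right-hand side, which gives the stated inequality.

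\textbf{Pure spatial derivatives.} First apply $\partial^\alpha_x$, $|\alpha|\le N$, to \eqref{Pm1}, take the $L^2_{x,v}$ inner product with $\partial^\alpha_x f_{\epsilon}$, and use \eqref{Lf}.
\begin{itemize}
\item The linear electric term $\frac1\epsilon v\sqrt M\cdot\nabla_x(I-\Delta_x)^{-1}(f_{\epsilon},\sqrt M)$ combines with the continuity equation $\partial_t(f_{\epsilon},\chi_0)+\frac1\epsilon\nabla_x\cdot(f_{\epsilon},v\chi_0)=0$. This produces $\frac12\Dt\|\partial^\alpha_x(I-\Delta_x)^{-1/2}(f_{\epsilon},\chi_0)\|^2$, which matches the $\phi_{\epsilon}$ part of $E_{N,1}$ via \eqref{Pm2}.
\item We obtain dissipation $\frac{\mu}{\epsilon^2}\|\partial^\alpha_x P_1f_{\epsilon}\|^2$.
\item The macroscopic dissipation $\|\nabla_x\partial^\alpha_x P_0f_{\epsilon}\|^2+\|\nabla_x\partial^\alpha_x\phi_{\epsilon}\|_{H^1}^2$ is not visible here. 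We recover it from the moment (fluid-type) equations for $(n,m,q)$, using interactive functionals of the form $\epsilon\sum(\partial^\alpha_x\nabla_x\cdot m,\partial^\alpha_x n)$ and similar terms built from the thirteen moments $\Theta_{ij}(P_1f),\Lambda_i(P_1f)$. The factor $\epsilon$ matches the $\frac1\epsilon$ transport scaling, so the resulting bound $\|\nabla_xP_0f\|^2\lesssim -\epsilon\Dt(\cdots)+\frac{1}{\epsilon^2}\|P_1f\|^2_{H^N}+\text{nonlinear}$ is $\epsilon$-uniform.
\item The Poisson coupling $n+(I-\Delta)^{-1}n$ yields dissipation for $\phi_{\epsilon}$ as well.
\end{itemize}

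\textbf{Mixed derivatives and weights.} Next apply $\partial^\alpha_x\partial^\beta_v$ with $|\beta|\ge1$ to the microscopic equation for $P_1f_{\epsilon}$, multiply by $w_1^2$, and proceed by induction on $|\beta|$.
\begin{itemize}
\item The commutator $[\partial_v^\beta,v\cdot\nabla_x]$ is controlled by the previous level. It carries a $\frac1\epsilon$, which we handle by Cauchy–Schwarz against the $\frac1{\epsilon^2}$ dissipation, using a small constant times lower-order $\frac{1}{\epsilon^2}\|P_1\cdot\|^2$ plus $\|\nabla_xP_0\|^2$.
\item The weighted coercivity $-(w_1^2\partial^\beta Lg,\partial^\beta g)\ge c\|\nu^{1/2}w_1\partial^\beta g\|^2-C\|g\|^2_{H^{|\beta|-1}_v}$ supplies the weighted dissipation.
\item For hard potentials, $\partial_t w_1$ contributes the extra good term $\frac{ab|v|}{(1+t)^{1+b}}w_1^2$. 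This compensates for the growth of $v\cdot\nabla_x\phi_{\epsilon}$ in $G_1$, as in \cite{Duan-3}; it is the reason for the exponential weight.
\end{itemize}

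\textbf{Nonlinear terms.} Finally we bound $G_1,\frac1\epsilon G_2$ and $\frac1{\epsilon^2}v\sqrt M\cdot\nabla_x(I-\Delta)^{-1}G_3$.
\begin{itemize}
\item $G_2=\Gamma$ is orthogonal to $N_0$, so its pairing involves only $P_1f$. Standard $\Gamma$ estimates give $\frac1\epsilon|(\partial\Gamma(f,f),\partial P_1f)|\le C E^{1/2}D$, because one factor $\frac1\epsilon\|\nu^{1/2}P_1f\|$ belongs to $D^{1/2}$.
\item $G_3=O(\epsilon^2\phi^2)$, so $\frac1{\epsilon^2}G_3=O(\phi_{\epsilon}^2)$ is bounded by $C\|\phi\|_{H^N}\|\nabla\phi\|_{H^N}$. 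Its pairing with $v\sqrt M$ only hits $m$, whose pairing can be reorganized into a time derivative plus dissipative terms via the continuity equation.
\item $G_1$ is handled with Sobolev embedding ($N\ge4$) and the $\nabla_v$ term by integration by parts, which costs a weight $|v|$; that cost is absorbed by $\nu$ for hard spheres and by the time-weight term for hard potentials.
\end{itemize}

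The main obstacle is keeping every estimate uniform in $\epsilon$: the singular $\frac1\epsilon$ and $\frac1{\epsilon^2}$ factors must always be paired with microscopic quantities. The term $\frac1\epsilon\nabla_x\phi_{\epsilon}\cdot\nabla_vP_0f$ is the delicate case. For it I would rewrite the contribution through the macroscopic equations, or pair it with $P_1$ and use $\frac1\epsilon\|\nabla_v P_1\|\le D^{1/2}$.

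Combining the three groups with suitable small constants defines $\mathcal{E}_{N,1}\sim E_{N,1}$. Smallness of $E_{N,1}(f_0)$ and a continuity argument then close the estimate, and uniqueness follows from an $L^2$ energy estimate on the difference of two solutions.
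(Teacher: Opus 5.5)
The $\epsilon$-bookkeeping in your plan is the standard argument the paper defers to; its proof is only a citation of \cite{Duan-3,Guo-2,Li-4}. You pair every $\frac1\epsilon$ or $\frac1{\epsilon^2}$ with microscopic dissipation, use $\epsilon$-weighted interactive functionals for $\nabla_xP_0f_\epsilon$, and induct in $\beta$. For hard spheres your absorption step works: since $|v|\lesssim\nu(v)$, the bound $\int|v||\nabla_x\phi_\epsilon|w_1^2|\partial^\alpha_x\partial^\beta_vP_1f_\epsilon|^2\le\epsilon^2\|\nabla_x\phi_\epsilon\|_{L^\infty_x}D_{N,1}$ holds.

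\textbf{Gap: the hard potential case.} There $\nu\sim\langle v\rangle^{\gamma}$ with $\gamma<1$. So the term $\frac12\int(v\cdot\nabla_x\phi_\epsilon)w_1^2|\partial^\alpha_x\partial^\beta_vP_1f_\epsilon|^2$, and its commutator versions with $\nabla_x\partial^{\alpha_1}_x\phi_\epsilon$, cannot be bounded by $\mathcal{E}_{N,1}^{1/2}D_{N,1}$. The only extra good term is the one coming from $\partial_tw_1$, namely $\frac{ab}{(1+t)^{1+b}}\||v|^{1/2}w_1\partial^\alpha_x\partial^\beta_vf_\epsilon\|^2$. Absorbing the bad term into it requires $\|\nabla_x\phi_\epsilon(t)\|_{L^\infty_x}$, and the field derivatives entering the commutators, to be at most $c\,ab\,(1+t)^{-1-b}$. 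That is a time-decay property of the electric field. It does not follow from $\sup_tE_{N,1}(f_\epsilon(t))\le\delta$.

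Consequently your claimed inequality $\frac{d}{dt}\mathcal{E}_{N,1}+D_{N,1}\le C(\mathcal{E}_{N,1}^{1/2}+\mathcal{E}_{N,1})D_{N,1}$ fails for hard potentials. A continuity argument based on energy smallness alone therefore does not close.

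What is missing is a bootstrap that couples the energy inequality with decay. Using Lemma \ref{3dmvpbfas2}, the Duhamel formula \eqref{nldh1}, and the additional smallness of $\|f_0\|_{L^{2,1}}$, one gets $\|\nabla_x\phi_\epsilon\|_{L^\infty_x}\lesssim\|\nabla_xn_\epsilon\|_{L^2_x}+\epsilon\|\phi_\epsilon\|^2_{L^\infty_x}\lesssim\delta_0(1+t)^{-5/4}$. The requirement $(1+t)^{1+b}(1+t)^{-5/4}\le 1$ is exactly the restriction $0<b\le\frac14$ carried by Theorem \ref{thm-1} and Lemma \ref{rbedl5}, together with their $L^{2,1}$ hypothesis. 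The paper's one-line proof hides this dependence as well, but the cited hard-potential argument needs it.

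\textbf{Smaller corrections.}
First, $G_1$ enters \eqref{Pm1} with coefficient $1$, not $\frac1\epsilon$. So there is no singular term $\frac1\epsilon\nabla_x\phi_\epsilon\cdot\nabla_vP_0f_\epsilon$ to worry about. Also, integrating $\nabla_x\phi_\epsilon\cdot\nabla_vf_\epsilon$ by parts against $w_1^2$ costs only $|\nabla_vw_1^2|\lesssim w_1^2$. The velocity growth comes entirely from $\frac12(v\cdot\nabla_x\phi_\epsilon)f_\epsilon$.

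Second, you run the weighted estimates only for $|\beta|\ge1$. Then your $\mathcal{E}$ does not control $\|w_1\partial^\alpha_xf_\epsilon\|^2$, and your dissipation lacks $\frac1{\epsilon^2}\|\nu^{1/2}w_1\partial^\alpha_xP_1f_\epsilon\|^2$; both belong to $E_{N,1}$ and $D_{N,1}$. For $|\alpha|\le N-1$ you can fix this by starting the induction at $\beta=0$. At $|\alpha|=N$, however, neither natural route is uniform in $\epsilon$:
\begin{itemize}
\item The $P_1$-equation contains $\frac1\epsilon P_1(v\cdot\nabla_x\partial^\alpha_xP_0f_\epsilon)$, which puts $N+1$ derivatives on $P_0f_\epsilon$.
\item The full equation tested against $w_1^2\partial^\alpha_xf_\epsilon$ leaves $\frac1{\epsilon^2}(L\partial^\alpha_xP_1f_\epsilon,w_1^2\partial^\alpha_xP_0f_\epsilon)$, which is only $O(\epsilon^{-1})D_{N,1}$.
\end{itemize}
That level needs its own argument.
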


Then, by Lemma \ref{rbedl-ee2}, we have

\begin{lem}\label{rbedl5}
Let $N\geq4$, $a>0$, $0<b\leq\frac{1}{4}$. For any $\epsilon\in(0,1)$, there exists a small constant $\delta_0>0$ such that if $E_{N,1}(f_0)+\|f_0\|^2_{L^{2,1}}\leq\delta_0^2$, then the solution $f_{\epsilon}=f_{\epsilon}(t,x,v)$ to the mVPB system \eqref{Pm1}-\eqref{Pm3} has the following time-decay rate estimate:
\be
\|f_{\epsilon}\|_{H^N_{w_1}}+\|\phi_{\epsilon}\|_{ H^{N+1}_x}\leq C\delta_0(1+t)^{-\frac{3}{4}},\label{rbedl5-1}
\ee
where $C>0$ is a constant independent of $\epsilon$. In particular, we have
\bq\label{rbedl5-3}
\|P_1f_{\epsilon}(t)\|_{H^{N-3}}\leq C\delta_0\(\epsilon(1+t)^{-\frac{5}{4}}+e^{ -\frac{\sigma_0 t}{\epsilon^2}}\),
\eq
where $\sigma_0,C>0$ are two constants independent of $\epsilon$.
\end{lem}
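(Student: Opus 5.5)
We combine the energy inequality of Lemma \ref{rbedl-ee2} with the linear decay estimates of Lemma \ref{3dmvpbfas2}, applied to the Duhamel representation \eqref{nldh1}, and close the argument by a continuity step on the weighted-in-time quantity
$$
Q(t)=\sup_{0\le s\le t}(1+s)^{\frac34}\Big(\|f_\epsilon(s)\|_{H^N_{w_1}}+\|\phi_\epsilon(s)\|_{H^{N+1}_x}\Big).
$$
The goal is to show $Q(t)\le C\delta_0+CQ(t)^2$ with $C$ independent of $\epsilon$; smallness of $\delta_0$ then gives \eqref{rbedl5-1}.

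\emph{Decay of the macroscopic part.}
\begin{itemize}
\item The linear term is handled by \eqref{ns321}: $\|P_0\partial^\alpha_x e^{tB_\epsilon/\epsilon^2}f_0\|_{L^2}\le C\delta_0(1+t)^{-3/4}$.
\item In the $G_2$ term the $\epsilon^{-1}$ is compensated because $P_0G_2=0$. Applying \eqref{ns325}, the $P_0$ projection gains a factor $\epsilon(1+t-s)^{-5/4}$; the remaining piece $\epsilon^{-1}e^{-\sigma_0(t-s)/\epsilon^2}$ integrates in $s$ to $O(\epsilon)$.
\item In the $G_3$ term the $\epsilon^{-2}$ is compensated because $G_3=O(\epsilon^2\phi_\epsilon^2)$. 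Also $v\sqrt M\cdot\nabla_x(\cdot)$ lies in $N_0$, so \eqref{ns321} applies directly.
\item For $G_1$, only $\nabla_x\phi_\epsilon f_\epsilon$-type quadratic terms appear, and they are bounded in $L^2\cap L^{2,1}$ by $CQ(t)^2(1+s)^{-3/2}$.
\item The time convolutions $\int_0^t(1+t-s)^{-3/4}(1+s)^{-3/2}ds\le C(1+t)^{-3/4}$ then give $\|P_0 f_\epsilon\|_{L^2}\le C(\delta_0+Q^2)(1+t)^{-3/4}$.
\end{itemize}

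\emph{Higher derivatives and weights.} The energy inequality controls these. Using $D_{N,1}\ge c(E_{N,1}-\|P_0f_\epsilon\|^2_{L^2}-\|\phi_\epsilon\|^2_{H^1})$ (dissipation misses only the lowest-order macroscopic part), we get
$$
\frac{d}{dt}\mathcal E_{N,1}+c\,\mathcal E_{N,1}\le C\|P_0f_\epsilon\|^2_{L^2}+C\|\phi_\epsilon\|^2_{H^1}.
$$
Gronwall with the previous bound then yields $\mathcal E_{N,1}\le C(\delta_0+Q^2)^2(1+t)^{-3/2}$. The potential is recovered from \eqref{Pm2}.

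For hard potentials the time-dependent weight $e^{a|v|/(1+t)^b}$ produces an extra good term $ab(1+t)^{-1-b}\||v|^{1/2}w_1\cdots\|^2$. This term is needed to absorb the weight loss from $\nabla_x\phi_\epsilon\cdot\nabla_v f_\epsilon$, and it is integrable against the decay $(1+t)^{-3/4}$ of $\nabla_x\phi_\epsilon$ precisely when $b\le\frac14$. I expect this weight absorption, together with uniformity in $\epsilon$ of the $1/\epsilon^2$ dissipation, to be the main technical obstacle. I would follow \cite{Duan-3,Li-4} for it.

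\emph{Proof of \eqref{rbedl5-3}.} Apply $P_1$ to \eqref{nldh1} and use \eqref{ns322}, which gives $C\delta_0(\epsilon(1+t)^{-5/4}+e^{-\sigma_0t/\epsilon^2})$ for the linear part; the loss of three derivatives comes from the $H^1$-norms in the lemma. For the $G_2$ term use \eqref{ns326}, which gives $\epsilon^{-1}(\epsilon^2(1+t-s)^{-7/4}+e^{-\sigma_0(t-s)/\epsilon^2})\|G_2\|$. The exponential piece integrates to $\epsilon\|G_2(t)\|$, which is $O(\epsilon\delta_0^2(1+t)^{-3/2})$. The $G_1$ and $G_3$ terms are handled similarly via \eqref{ns322}. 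All convolutions are bounded by $C\delta_0\epsilon(1+t)^{-5/4}$.
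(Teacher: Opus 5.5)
Your proposal is correct and follows essentially the same route as the paper. Both use a continuity argument on $\sup_s(1+s)^{3/4}E_{N,1}(f_\epsilon(s))^{1/2}$. Both get $(1+t)^{-3/4}$ decay of $\|P_0f_\epsilon\|_{L^2}$ from the Duhamel formula with Lemma \ref{3dmvpbfas2}, using $P_0G_2=0$ and $G_3=O(\epsilon^2\phi_\epsilon^2)$. Both close the full energy with Lemma \ref{rbedl-ee2} and Gronwall, and both obtain \eqref{rbedl5-3} from the $P_1$ estimates of Lemma \ref{3dmvpbfas2}.

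One side remark, which does not affect the argument since you, like the paper, take Lemma \ref{rbedl-ee2} as given: your heuristic for $b\le\frac14$ is off. Absorbing $\int(v\cdot\nabla_x\phi_\epsilon)|w_1\partial^\alpha_x\partial^\beta_v f_\epsilon|^2$ into $ab(1+t)^{-1-b}\big\||v|^{1/2}w_1\partial^\alpha_x\partial^\beta_v f_\epsilon\big\|^2$ needs $\|\nabla_x\phi_\epsilon\|_{L^\infty_x}\lesssim\delta_0(1+t)^{-1-b}$, which the $(1+t)^{-3/4}$ rate never provides. In the works cited for that lemma, the condition $b\le\frac14$ comes from the faster $(1+t)^{-5/4}$ decay of $\nabla_x\phi_\epsilon$.
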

\begin{proof}
First, we prove \eqref{rbedl5-1} as follows. Define
$$
Q_{\epsilon}(t)=\sup_{0\leq s\leq t}\Big\{(1+s)^{\frac{3}{4}}E_{N,1}(f_{\epsilon}(s))^{\frac{1}{2}}\Big\}.
$$
We claim that
\bq\label{rbedl5-0}
Q_{\epsilon}(t)\leq C\delta_0.
\eq
It is straightforward to verify that the estimate \eqref{rbedl5-1} follows from \eqref{rbedl5-0}.

From \cite{Duan-3,Guo-2}, we have
$$
\| \Gamma(f,g)\|\leq C \(\|w_1 f\|\| g\|+\| f\|\|w_1 g\|\) .
$$
Thus,
\be
\big\|\partial_x^{\alpha}G_j(f_{\epsilon}(s))\big\|_{L^2}+\big\|\partial_x^{\alpha}G_j(f_{\epsilon}(s))\big\|_{L^{2,1}}\leq CQ_{\epsilon}(t)^2(1+s)^{-\frac{3}{2}},\quad j=1,2,\label{rbedl5-1-2}
\ee
where $0\leq|\alpha|\leq N-1$, $0\leq s\leq t$ and $C>0$ is a constant.

Since it holds for $p\in[1,\infty] $ and $k=0,1$ that
\bma\label{youngbds}
\big\|\Tdx^{k} (I-\Delta_x)^{-1}f\big\|_{L^p_x}\leq \left\|\Tdx^{k}\((4\pi|x|)^{-1}e^{-|x|}\)\right\|_{L^1_x}\|f\|_{L^p_x} \le C\|f\|_{L^p_x},
\ema
we have
\be
\big\|\partial_x^{\alpha}\nabla_x(I-\Delta_x)^{-1}G_3(f_{\epsilon}(s))\big\|_{L^2_x}+ \big\|\partial_x^{\alpha}\nabla_x(I-\Delta_x)^{-1}G_3(f_{\epsilon}(s))\big\|_{L^{1}_x}\leq CQ_{\epsilon}(t)^2\epsilon^2(1+s)^{-\frac{3}{2}},\label{rbedl5-1-4}
\ee
where $0\leq|\alpha|\leq N$, $0\leq s\leq t$ and $C>0$ is a constant.

Since $P_0G_2(f_{\epsilon})=0$, it follows from Lemma \ref{3dmvpbfas2}, \eqref{nldh1} and \eqref{rbedl5-1-2}-\eqref{rbedl5-1-4} that
\bma
\|P_0f_{\epsilon}(t)\|_{L^2}&\leq C\((1+t)^{-\frac{3}{4}}+e^{-\frac{\sigma_0t}{\epsilon^2}}\)\(\|f_0\|_{L^2}+\|f_0\|_{L^{2,1}}\)\nnm\\
&\quad+C\int^{t}_0\((1+t-s)^{-\frac{3}{4}}+\frac{1}{\epsilon}e^{-\frac{\sigma_0(t-s)}{\epsilon^2}}\) \nnm\\
&\qquad \times\bigg(\sum^2_{j=1}\|G_j(s)\|_{H^1\cap L^{2,1}}+\frac{1}{\epsilon^2} \|\nabla_x(I-\Delta_x)^{-1}G_3(s) \|_{H^1_x\cap L^1_x} \bigg)ds\nnm\\
&\leq C\delta_0(1+t)^{-\frac{3}{4}}+CQ_{\epsilon}(t)^2(1+t)^{-\frac{3}{4}}.\label{rbedl5-2-1}
\ema

By Lemma \ref{rbedl-ee2}, \eqref{rbedl5-2-1} and using the fact that $D_{N,1}(f_{\epsilon})+\|P_0f_{\epsilon}\|^2_{L^2}\geq c_1\mathcal{E}_{N,1}(f_{\epsilon})$ for some constant $c_1>0$, we can obtain
\bma
E_{N,1}(f_{\epsilon}(t))&\leq Ce^{-c_1t}E_{N,1}(f_{0})+C\int_0^te^{-c_1(t-s)}\|P_0f_{\epsilon}(s)\|^2_{L^2}ds\nnm\\
&\leq C\delta_0^2e^{-c_1t}+C\(\delta_0+Q_{\epsilon}(t)^2\)^2\int_0^te^{-c_1(t-s)}(1+s)^{-\frac{3}{2}}ds\nnm\\
&\leq C(\delta_0+Q_{\epsilon}(t)^2)^2(1+t)^{-\frac{3}{2}}.\label{rbedl5-2-2}
\ema
By combining \eqref{rbedl5-2-1}-\eqref{rbedl5-2-2}, we obtain
$$
Q_{\epsilon}(t)\leq C\delta_0+CQ_{\epsilon}(t)^2,
$$
which leads to \eqref{rbedl5-0} provided $\delta_0>0$ sufficiently small. Then we proved \eqref{rbedl5-1}.

Finally, we deal with \eqref{rbedl5-3}. By Lemma \ref{3dmvpbfas2}, \eqref{nldh1} and \eqref{rbedl5-1-2}-\eqref{rbedl5-1-4} that
\bma
\| P_1f_{\epsilon}(t)\|_{H^k}&\leq C\(\epsilon(1+t)^{-\frac{5}{4}}+e^{-\frac{\sigma_0t}{\epsilon^2}}\)\(\| f_0\|_{H^{1+k}}+\|f_0\|_{L^{2,1}}\)\nnm\\
&\quad+C\int^{t}_0\(\epsilon(1+t-s)^{-\frac{5}{4}}+\frac{1}{\epsilon}e^{-\frac{\sigma_0(t-s)}{\epsilon^2}}\) \nnm\\
&\qquad \times\bigg(\sum^2_{j=1}\| G_j(s)\|_{H^{2+k}\cap L^{2,1}}+\frac{1}{\epsilon^2} \| \nabla_x(I-\Delta_x)^{-1}G_3(s) \|_{H^{2+k}_x\cap L^1_x} \bigg)ds\nnm\\
&\leq \(C\delta_0+CQ_{\epsilon}(t)^2\)\(\epsilon(1+t)^{-\frac{5}{4}}+e^{-\frac{\sigma_0t}{\epsilon^2}}\),\label{rbedl5-3-1}
\ema
where $k\leq N-3$. The proof of the lemma is completed.
\end{proof}

By virtue of Lemma \ref{3dmvpbfas3} and \eqref{nlnsdh1}, we have following lemma precisely by using an argument similar to that of \cite{Li-1}.

\begin{lem}\label{rbedl6}
Let $N\geq2$. There exists a small constant $\delta_0>0$ such that if $\|f_0\|_{H^N}+\|f_0\|_{L^{2,1}}\leq\delta_0$ then the NSPF system \eqref{NS1}-\eqref{INS1} admits a unique global solution $(n,m,q)(t,x)\in L^{\infty}_t\big(H^N_x\big)$.
Moreover, $u(t,x,v)=n(t,x)\chi_0+m(t,x)\cdot v\chi_0+q(t,x)\chi_4$ has the following time-decay rate:
$$
\|u(t)\|_{H^N}+\|\nabla_x\phi(t)\|_{H^N_x}\leq C\delta_0(1+t)^{-\frac{3}{4}},
$$
where $\phi(t,x)=-(I-\Delta_x)^{-1}n(t,x)$ and $C>0$ is a constant.
\end{lem}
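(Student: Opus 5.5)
We will prove Lemma \ref{rbedl6} by a contraction/bootstrap argument built on the mild formulation \eqref{nlnsdh1}. Fix $T>0$ and define
$$
\mathcal{Q}(t)=\sup_{0\le s\le t}(1+s)^{\frac34}\big(\|u(s)\|_{H^N}+\|\nabla_x\phi(s)\|_{H^N_x}\big),\qquad \phi=-(I-\Delta_x)^{-1}n .
$$
We will consider the map $\mathcal{T}$ sending $u=n\chi_0+m\cdot v\chi_0+q\chi_4$ (with $\nabla_x\cdot m=0$ and the constraint \eqref{lns1}) to the right-hand side of \eqref{nlnsdh1}. By Lemma \ref{3dmvpbfas1}, a fixed point of $\mathcal{T}$ is a solution to \eqref{NS1}--\eqref{INS1}. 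The constraint $n+\sqrt{2/3}q-\phi=0$ is automatic, because $V(t,\xi)$ projects onto $\mathrm{span}\{E_0,E_2,E_3\}$, and $E_0$ encodes exactly $\hat n=-\sqrt{2/3}\frac{1+|\xi|^2}{2+|\xi|^2}\hat q$.

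\textbf{Linear part.} Lemma \ref{3dmvpbfas3} with $\alpha'=0$ gives $\|V(t)P_0f_0\|_{H^N}\le C(1+t)^{-3/4}(\|f_0\|_{H^N}+\|f_0\|_{L^{2,1}})\le C\delta_0(1+t)^{-3/4}$. For $\nabla_x\phi$, note that $\nabla_x(I-\Delta_x)^{-1}$ is bounded on $H^N_x$, so $\|\nabla_x\phi\|_{H^N_x}\le C\|n\|_{H^N_x}\le C\|u\|_{H^N}$. Alternatively one can use \eqref{lnsp11}, where $R_4(\xi)$ is bounded.

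\textbf{Nonlinear part.} For $Z_1$ we apply Lemma \ref{3dmvpbfas3} with $\alpha'=0$ to the $L^{2,1}$ part and $\alpha'=\alpha$ to the $L^2$ part:
$$
\Big\|\int_0^tV(t-s)Z_1(s)ds\Big\|_{H^N}\le C\int_0^t(1+t-s)^{-\frac34}\big(\|Z_1\|_{H^N}+\|Z_1\|_{L^{2,1}}\big)ds .
$$
Here $\|Z_1\|_{L^{2,1}}\le C\|u\|_{L^2}\|\nabla_x\phi\|_{L^2}\le C\mathcal{Q}^2(1+s)^{-3/2}$. The top-order term $\|Z_1\|_{H^N}$ is bounded by Moser-type estimates, using $N\ge2$ and the Sobolev embedding $H^2_x\subset L^\infty_x$, by $C\mathcal{Q}^2(1+s)^{-3/2}$. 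The convolution $\int_0^t(1+t-s)^{-3/4}(1+s)^{-3/2}ds\le C(1+t)^{-3/4}$ closes.

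For the divergence term $\div_xZ_2$, we move one derivative onto the semigroup. On the $L^{2,1}$ part we use Lemma \ref{3dmvpbfas3} with $m=1$, gaining $(1+t-s)^{-5/4}$ from $\|Z_2\|_{L^{2,1}}\le C\|u\|_{L^2}^2$. The main obstacle is the top-order derivative: $\partial^\alpha\div_xZ_2$ with $|\alpha|=N$ involves $N+1$ derivatives of $u$. We will handle it on the Fourier side by using the parabolic smoothing of $e^{-d_j(|\xi|)t}$. Since $d_j(|\xi|)\ge c|\xi|^2$ (from \eqref{sp4}, as $\kappa_0,\kappa_1>0$), we have $|\xi|e^{-d_j(t-s)}\le C(t-s)^{-1/2}$. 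This yields
$$
\Big\|\partial^\alpha\int_0^tV(t-s)\div_xZ_2\,ds\Big\|\le C\int_0^t(t-s)^{-\frac12}(1+t-s)^{-\frac34}\|\partial^\alpha Z_2\|\,ds ,
$$
and this integrable singularity again gives $C\mathcal{Q}^2(1+t)^{-3/4}$.

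Altogether this gives $\mathcal{Q}(t)\le C\delta_0+C\mathcal{Q}(t)^2$, hence $\mathcal{Q}\le 2C\delta_0$ for small $\delta_0$. The same bilinear estimates applied to differences show that $\mathcal{T}$ is a contraction on the ball $\{\mathcal{Q}\le 2C\delta_0\}$ in the weighted norm. This gives existence and uniqueness in $L^\infty_t(H^N_x)$ together with the stated decay.
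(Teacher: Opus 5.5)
Your proposal is correct and takes essentially the route the paper indicates. The paper gives no details: it invokes the Duhamel formula \eqref{nlnsdh1}, the decay estimates of Lemma \ref{3dmvpbfas3}, and the bootstrap argument of \cite{Li-1}, which is exactly your scheme leading to $\mathcal{Q}(t)\le C\delta_0+C\mathcal{Q}(t)^2$. One precision: your top-order display for $\div_x Z_2$ holds only for the high-frequency part $|\xi|\ge1$, where $|\xi|e^{-c|\xi|^2(t-s)}\le C(t-s)^{-1/2}e^{-c(t-s)/2}$. The low frequencies must be covered, as your plan intends, by the $L^{2,1}$ bound with the factor $(1+t-s)^{-5/4}$; $L^2$ data alone would give only $(t-s)^{-1/2}$, and the convolution would then not close at the rate $(1+t)^{-3/4}$.
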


\subsection{ Optimal convergence rate}
In this subsection, we will complete the proof of Theorem \ref{thm-2} about the convergence rate of the diffusion limit.
\begin{lem}[\cite{Li-1}]\label{ocr-1}
For any $i,j=1,2,3$, it hold that
\bmas
\Gamma(v_i\chi_0,v_j\chi_0)&=-\frac{1}{2}LP_1(v_iv_j\chi_0),\\
\Gamma(v_i\chi_0,|v|^2\chi_0)&=-\frac{1}{2}LP_1(v_i|v|^2\chi_0),\\
\Gamma(|v|^2\chi_0,|v|^2\chi_0)&=-\frac{1}{2}LP_1(|v|^4\chi_0).
\emas
\end{lem}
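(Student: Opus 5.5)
The plan is to derive all three identities from a single fact: $Q$ vanishes on every Maxwellian,
\[
Q(\mathcal M_{\rho,u,\theta},\mathcal M_{\rho,u,\theta})=0,\qquad \mathcal M_{\rho,u,\theta}(v)=\rho(2\pi\theta)^{-\frac32}e^{-\frac{|v-u|^2}{2\theta}}.
\]
We differentiate this identity in the parameters at $(\rho,u,\theta)=(1,0,1)$. Since $Q$ in the paper's $\frac12$-symmetrized form is bilinear and symmetric, the definitions in \eqref{3dmvpbnlt} give
\[
Lg=\frac{2}{\sqrt M}Q(M,\sqrt M g)=2\Gamma(\chi_0,g).
\]
For any smooth family $\mathcal M_s$ with $\mathcal M_0=M$, differentiating twice gives
\[
0=\partial_s^2 Q(\mathcal M_s,\mathcal M_s)\big|_{s=0}=2Q(\mathcal M'',M)+2Q(\mathcal M',\mathcal M').
\]
Dividing by $\sqrt M$ yields the master identity
\[
\Gamma\Big(\frac{\mathcal M'}{\sqrt M},\frac{\mathcal M'}{\sqrt M}\Big)=-\frac12 L\Big(\frac{\mathcal M''}{\sqrt M}\Big).
\]
A mixed version holds for two-parameter families: $\Gamma(\partial_s\mathcal M/\sqrt M,\partial_\tau\mathcal M/\sqrt M)=-\frac12L(\partial_s\partial_\tau\mathcal M/\sqrt M)$.

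\textbf{First identity.} Take the velocity shift $u=sa$ with $a\in\R^3$. Then $\mathcal M'/\sqrt M=(a\cdot v)\chi_0$ and $\mathcal M''/\sqrt M=((a\cdot v)^2-|a|^2)\chi_0$. Since $|a|^2\chi_0\in N_0$ is annihilated by $L$, and $L=LP_1$, we get
\[
\Gamma((a\cdot v)\chi_0,(a\cdot v)\chi_0)=-\tfrac12 LP_1((a\cdot v)^2\chi_0).
\]
Both sides are quadratic forms in $a$, so polarizing ($a=e_i,e_j,e_i+e_j$) gives the first identity.

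\textbf{Second identity.} Use the two-parameter family $u=sa$, $\theta=1+\tau$. Then $\partial_\tau\mathcal M/\sqrt M=\frac12(|v|^2-3)\chi_0$. A direct computation gives
\[
\partial_s\partial_\tau\mathcal M/\sqrt M=(a\cdot v)\Big(\tfrac{|v|^2-3}{2}-1\Big)\chi_0=\tfrac12(a\cdot v)(|v|^2-5)\chi_0.
\]
Hence $\Gamma((a\cdot v)\chi_0,(|v|^2-3)\chi_0)=-\frac12L((a\cdot v)(|v|^2-5)\chi_0)$. Next use $\Gamma((a\cdot v)\chi_0,\chi_0)=\frac12L((a\cdot v)\chi_0)=0$, and add $3\Gamma((a\cdot v)\chi_0,\chi_0)$ on the left. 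The $-5(a\cdot v)\chi_0$ term on the right is killed by $L$. This leaves $\Gamma((a\cdot v)\chi_0,|v|^2\chi_0)=-\frac12LP_1((a\cdot v)|v|^2\chi_0)$, and $a=e_i$ gives the second identity.

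\textbf{Third identity.} Differentiate twice in $\tau$. Then $\partial_\tau^2\mathcal M/\sqrt M$ is a quartic polynomial in $|v|$ times $\chi_0$, with leading term $\frac14|v|^4\chi_0$ plus lower terms in $|v|^2\chi_0,\chi_0$. This gives $\Gamma((|v|^2-3)\chi_0,(|v|^2-3)\chi_0)=-\frac12L(|v|^4\chi_0+c_1|v|^2\chi_0)$. We expand the left side bilinearly and use $\Gamma(\chi_0,g)=\frac12Lg$ for the cross terms. We also use $L(|v|^2\chi_0)=L\chi_0=0$, since $|v|^2\chi_0$ and $\chi_0$ lie in $N_0$. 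Together these reduce the identity to $\Gamma(|v|^2\chi_0,|v|^2\chi_0)=-\frac12LP_1(|v|^4\chi_0)$.

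The only real obstacle is bookkeeping. One must check the normalization convention for $Q$ and $\Gamma$, namely the factor $\frac12$ and the resulting $L=2\Gamma(\chi_0,\cdot)$. One must also verify that every lower-order polynomial produced by differentiation lies in $N_0$, so that it is annihilated by $L$ and the replacement $L=LP_1$ is legitimate. Justifying differentiation under the collision integral is routine, since all functions involved are Gaussians times polynomials.
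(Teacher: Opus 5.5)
The paper gives no proof of this lemma; it is quoted from \cite{Li-1}, so there is no in-paper argument to match. Your proof is correct. With the symmetric $\frac12$-normalized $Q$ one does have $Lg=2\Gamma(\chi_0,g)$, and the second variation of $Q(\mathcal M,\mathcal M)=0$ gives your master identity. Your derivatives also check out: $\partial_s\partial_\tau\mathcal M/\sqrt M=\frac12(a\cdot v)(|v|^2-5)\chi_0$ and $\partial_\tau^2\mathcal M/\sqrt M=\big(\frac14|v|^4-\frac52|v|^2+\frac{15}{4}\big)\chi_0$, with all lower-order pieces lying in $N_0$.

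A more common route is purely algebraic. For collision invariants $\varphi,\psi\in\mathrm{span}\{1,v_1,v_2,v_3,|v|^2\}$, multiplying $\varphi'+\varphi'_*=\varphi+\varphi_*$ by $\psi'+\psi'_*=\psi+\psi_*$ gives
\begin{equation*}
\varphi'\psi'_*+\varphi'_*\psi'-\varphi\psi_*-\varphi_*\psi=-\big[(\varphi\psi)'+(\varphi\psi)'_*-\varphi\psi-(\varphi\psi)_*\big].
\end{equation*}
Together with $M'M'_*=MM_*$, this means $Q(M\varphi,M\psi)=-Q(M,M\varphi\psi)$, that is, $\Gamma(\varphi\chi_0,\psi\chi_0)=-\frac12 L(\varphi\psi\chi_0)=-\frac12LP_1(\varphi\psi\chi_0)$. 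The three identities are the cases $(\varphi,\psi)=(v_i,v_j)$, $(v_i,|v|^2)$, $(|v|^2,|v|^2)$. This needs no polarization, no lower-order bookkeeping and no differentiation under the integral.

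Your variational argument yields the same general formula. The bookkeeping you flag as the main obstacle is in fact automatic. Write
$\partial_s\partial_\tau\mathcal M=(\partial_s\log\mathcal M\,\partial_\tau\log\mathcal M+\partial_s\partial_\tau\log\mathcal M)\mathcal M$. Since $\log\mathcal M_{\rho,u,\theta}\in\mathrm{span}\{1,v,|v|^2\}$ for every parameter value, the second term is always annihilated by $L$. Moreover, the first derivatives $1$, $v_i$, $\frac12(|v|^2-3)$ of $\log\mathcal M$ at $(1,0,1)$ span all collision invariants.

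What your route buys is the interpretation of the lemma as the second variation of $Q$ along the manifold of equilibria. The algebraic route is shorter and needs no regularity justification.
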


\noindent\textbf{\underline{Proof of Theorem 1.2.}} Firstly, we prove \eqref{thm-2-1}. Define
\bma\label{dlth2-0}
\Lambda_{\epsilon}(t)&=\sup_{0\leq s\leq t}\bigg\{\(\epsilon|\ln\epsilon|^2(1+s)^{-\frac{3}{4}}+\(1+\frac{s}{\epsilon}\)^{-1}\)^{-1}\nnm\\
&\qquad\qquad\times\(\|f_{\epsilon}(s)-u(s)\|_{L^{\infty}}+\|\nabla_x\phi_{\epsilon}(s)-\nabla_x\phi(s)\|_{L^{\infty}_x}\)\bigg\}.
\ema

By \eqref{nldh1} and \eqref{nlnsdh1}, we have
\bma\label{dlth2-1}
\|f_{\epsilon}(t)-u(t)\|_{L^{\infty}}&\leq \Big\|e^{\frac{tB_{\epsilon}}{\epsilon^2}}f_0-V(t)P_0f_0\Big\|_{L^{\infty}}+\int_0^t\Big\|e^{\frac{(t-s)B_{\epsilon}}{\epsilon^2}}G_1(f_{\epsilon})-V(t-s)Z_1(u)\Big\|_{L^{\infty}}ds\nnm\\
&\quad+\int_0^t\bigg\|\frac{1}{\epsilon}e^{\frac{(t-s)B_{\epsilon}}{\epsilon^2}}G_2(f_{\epsilon})-V(t-s)\div_xZ_2(u)\bigg\|_{L^{\infty}}ds\nnm\\
&\quad+\int_0^t\bigg\|\frac{1}{\epsilon^2}e^{\frac{(t-s)B_{\epsilon}}{\epsilon^2}}v\sqrt{M}\cdot\nabla_x(I-\Delta_x)^{-1}G_3(f_{\epsilon})\bigg\|_{L^{\infty}}ds\nnm\\
&=:I_1+I_2+I_3+I_4.
\ema

For $I_1$, by Lemma \ref{3dmvpbfas6}, we have
\bma\label{dlth2-2}
I_1&\leq C\bigg(\epsilon(1+t)^{-2}+\(1+\frac{t}{\epsilon}\)^{-1}\bigg)\(\|f_0\|_{H^{3}}+\|f_0\|_{W^{3,1}}\)\nnm\\
&\leq C\delta_0\bigg(\epsilon(1+t)^{-2}+\(1+\frac{t}{\epsilon}\)^{-1}\bigg).
\ema

To esitmate $I_2$, we decompose
\bma\label{dlth2-3}
I_2&\leq \int_0^t\Big\|e^{\frac{(t-s)B_{\epsilon}}{\epsilon^2}}G_1(f_{\epsilon})-V(t-s)P_0G_1(f_{\epsilon})\Big\|_{L^{\infty}}ds\nnm\\
&\quad+\int_0^t\|V(t-s)P_0G_1(f_{\epsilon})-V(t-s)Z_1(u)\|_{L^{\infty}}ds\nnm\\
&=:I_{21}+I_{22}.
\ema
By Lemma \ref{3dmvpbfas6} and \eqref{rbedl5-1-2}, we have
\bma\label{dlth2-3-1}
I_{21}&\leq C\int^t_0\(\epsilon(1+t-s)^{-2}+\(1+\frac{t-s}{\epsilon}\)^{-1}\) \|G_1(f_{\epsilon})\|_{H^{3}\cap W^{3,1}} ds\nnm\\
&\leq C\delta_0^2\int^t_0\(\epsilon(1+t-s)^{-2}+\(1+\frac{t-s}{\epsilon}\)^{-1}\)(1+s)^{-\frac{3}{2}}ds.
\ema
Note that for $t\leq1$,
\bq\label{dlth2-3-1-2-1}
\int^t_0\(1+\frac{t-s}{\epsilon}\)^{-1}(1+s)^{-\frac{3}{2}}ds\leq\int^t_0\(1+\frac{t-s}{\epsilon}\)^{-1}ds\leq C\epsilon|\ln\epsilon|,
\eq
and for $t\geq1$,
\bq\label{dlth2-3-1-2-2}
\int^t_0\(1+\frac{t-s}{\epsilon}\)^{-1}(1+s)^{-\frac{3}{2}}ds\leq \epsilon\int^t_0(t-s)^{-1}(1+s)^{-\frac{3}{2}}ds\leq C\epsilon(1+t)^{-1}.
\eq
Then, by \eqref{dlth2-3-1}-\eqref{dlth2-3-1-2-2}, we have
\be\label{dlth2-3-1-j}
I_{21} \leq C\delta_0^2\epsilon|\ln\epsilon|(1+t)^{-1}.
\ee
Since
$$
P_0G_1(f_{\epsilon})=(n_{\epsilon}\nabla_x\phi_{\epsilon})\cdot v\chi_0+\sqrt{\frac{2}{3}}(m_{\epsilon}\cdot\nabla_x\phi_{\epsilon})\chi_4,
$$
where $n_{\epsilon}=(f_{\epsilon},\chi_0)$ and $m_{\epsilon}=(f_{\epsilon},v\chi_0)$, it follows from  Lemma \ref{3dmvpbfas3} and Lemma \ref{rbedl5} that
\bma\label{dlth2-3-1-3}
I_{22}&\leq C\int^t_0(1+t-s)^{-\frac{3}{4}}\ln\(2+\frac{1}{t-s}\)\nnm\\
&\qquad \times\(\|f_{\epsilon}-u\|_{L^{\infty}}\|\nabla_x\phi_{\epsilon}\|_{H^2_{x}}+\|f_{\epsilon}\|_{H^2}\|\nabla_x\phi_{\epsilon}-\nabla_x\phi\|_{L^{\infty}_x}\)ds\nnm\\
&\leq C\delta_0\Lambda_{\epsilon}(t)\int^t_0(1+t-s)^{-\frac{3}{4}}\ln\(2+\frac{1}{t-s}\)\nnm\\
&\qquad \times\(\epsilon|\ln\epsilon|^2(1+s)^{-\frac{3}{4}}+\(1+\frac{s}{\epsilon}\)^{-1}\)(1+s)^{-\frac{3}{4}}ds.
\ema
For $t\leq\epsilon$, we have
\bma
&\quad\int^{t}_0(1+t-s)^{-\frac{3}{4}}\ln\(2+\frac{1}{t-s}\)\(1+\frac{s}{\epsilon}\)^{-1}(1+s)^{-\frac{3}{4}}ds\nnm\\
&\leq C\int^{t}_0\ln\(2+\frac{1}{t-s}\)ds\le C\leq C\(1+\frac{t}{\epsilon}\)^{-1}.\label{dlth2-3-1-5}
\ema
For $t\geq\epsilon$, we have
\bma
&\quad\(\int^{\frac{t}{2}}_0+\int_{\frac{t}{2}}^t\)(1+t-s)^{-\frac{3}{4}}\ln\(2+\frac{1}{t-s}\)\(1+\frac{s}{\epsilon}\)^{-1}(1+s)^{-\frac{3}{4}}ds\nnm\\
&\leq C(1+t)^{-\frac{3}{4}}\ln\(2+\frac{2}{t}\)\(\int^{1}_0\(1+\frac{s}{\epsilon}\)^{-1}ds+\int^{\frac{t}{2}}_1\(\frac{s}{\epsilon}\)^{-1}s^{-\frac{3}{4}}ds\)\nnm\\
&\quad+C\(1+\frac{t}{\epsilon}\)^{-1}(1+t)^{-\frac{3}{4}}\int_{\frac{t}{2}}^t(1+t-s)^{-\frac{3}{4}}\ln\(2+\frac{1}{t-s}\)ds\nnm\\
&\leq C\epsilon|\ln\epsilon|^2(1+t)^{-\frac{3}{4}}+C\(1+\frac{t}{\epsilon}\)^{-1}.\label{dlth2-3-1-7}
\ema

Thus, by \eqref{dlth2-3-1-3}-\eqref{dlth2-3-1-7}, we have
\be\label{dlth2-3-1-8}
I_{22}\leq C\delta_0\Lambda_{\epsilon}(t)\bigg(\epsilon|\ln\epsilon|^2(1+t)^{-\frac{3}{4}}+\(1+\frac{t}{\epsilon}\)^{-1}\bigg).
\ee

For $I_3$, we decompose
\bma\label{dlth2-4}
I_3&\leq\int_0^t\bigg\|\frac{1}{\epsilon}e^{\frac{(t-s)B_{\epsilon}}{\epsilon^2}}G_2(f_{\epsilon})-V(t-s)P_0\big(v\cdot\nabla_xL^{-1}G_2(f_{\epsilon})\big)\bigg\|_{L^{\infty}}ds\nnm\\
&\quad+\int_0^t\big\|V(t-s)P_0\big(v\cdot\nabla_xL^{-1}G_2(f_{\epsilon})\big)-V(t-s)\div_xZ_2\big\|_{L^{\infty}}ds\nnm\\
&=:I_{31}+I_{32}.
\ema
For $I_{31}$, by Lemma \ref{3dmvpbfas7} and \eqref{rbedl5-1-2}, and noticing that $P_0G_2(f_{\epsilon})=0$, we have
\bma
I_{31}&\leq C\int_0^t\(\epsilon(1+t-s)^{-\frac{5}{2}}+\(1+\frac{t-s}{\epsilon}\)^{-1}+\frac{1}{\epsilon}e^{-\frac{\sigma_0 (t-s)}{\epsilon^2}}\)\|G_2(f_{\epsilon})\|_{H^{4}\cap W^{4,1}} ds\nnm\\
&\leq C\delta_0^2\int_0^t\(\epsilon(1+t-s)^{-\frac{5}{2}}+\(1+\frac{t-s}{\epsilon}\)^{-1}+\frac{1}{\epsilon}e^{-\frac{\sigma_0 (t-s)}{\epsilon^2}}\)(1+s)^{-\frac{3}{2}}ds\nnm\\
&\leq C\delta_0^2\epsilon|\ln\epsilon|(1+t)^{-1},\label{dlth2-4-1}
\ema
where we had used \eqref{dlth2-3-1-2-1}-\eqref{dlth2-3-1-2-2}.

To estimate $I_{32}$, we decompose
\bmas
P_0\big(v\cdot\nabla_xL^{-1}G_2(f_{\epsilon})\big)&=P_0\big(v\cdot\nabla_xL^{-1}\Gamma(P_0f_{\epsilon},P_0f_{\epsilon})\big)+2P_0\big(v\cdot\nabla_xL^{-1}\Gamma(P_0f_{\epsilon},P_1f_{\epsilon})\big)\\
&\quad+P_0\big(v\cdot\nabla_xL^{-1}\Gamma(P_1f_{\epsilon},P_1f_{\epsilon})\big)\nnm\\
&=:J_1+J_2+J_3.
\emas
By Lemma \ref{ocr-1}, we can obtain (cf. \cite{Li-4})
$$
J_1=-\sum^3_{i,j=1}\partial_{x_i}(m_{\epsilon}^im_{\epsilon}^j)v_j\chi_0+\frac{1}{3}\sum^3_{i,j=1}\partial_{x_j}(m_{\epsilon}^i)^2v_j\chi_0-\frac{5}{3}\partial_{x_j}(m_{\epsilon}^jq_{\epsilon})\chi_4.
$$
Since
$\big(v\chi_0\cdot\xi\hat{g}_0,E_j(\xi)\big)_{\xi}=0,$ $j=0,2,3$  for any $ g_0\in L^2(\R^3_x),$
we have by \eqref{lns4j1} that
\be\label{lsp2j3-3}
V(t)(v\chi_0\cdot\Tdx g_0)=\sum_{j=0,2,3}\mathcal{F}^{-1}\[e^{-d_jt}\big(v\chi_0\cdot\xi\hat{g}_0,E_j(\xi)\big)_{\xi}E_j(\xi)\]=0.
\ee
Thus,
\bma \label{dlth2-4-6}
V(t-s)J_1(s)&=-V(t-s)\div_x\[(m_{\epsilon}\otimes m_{\epsilon})\cdot v\chi_0+\frac{5}{3}(m_{\epsilon}q_{\epsilon})\chi_4\]\nnm\\
&=:V(t-s)\div_xJ_4(s).
\ema
By Lemma \ref{3dmvpbfas3}, Lemmas \ref{rbedl5}-\ref{rbedl6} and \eqref{dlth2-4-6}, we can obtain
\bma\label{dlth2-4-7}
I_{32}&\leq \int_0^t\|V(t-s)\div_x(J_4-Z_2)(s)\|_{L^{\infty}}ds+C\sum^3_{k=2}\int_0^t\|V(t-s)J_k(s)\|_{L^{\infty}}ds\nnm\\
&\leq C\int_0^t(1+t-s)^{-\frac{3}{4}}(t-s)^{-\frac{1}{2}}\|f_{\epsilon}-u\|_{L^{\infty}}(\|f_{\epsilon}\|_{H^2}+\|u\|_{H^2})ds\nnm\\
&\quad+C\int_0^t(1+t-s)^{-\frac{3}{4}}(t-s)^{-\frac{1}{2}}\|P_1f_{\epsilon}\|_{H^2}(\|P_0f_{\epsilon}\|_{H^2}+\|P_1f_{\epsilon}\|_{H^2})ds\nnm\\
&\leq C\delta_0\Lambda_{\epsilon}(t)\int_0^t(1+t-s)^{-\frac{3}{4}}(t-s)^{-\frac{1}{2}}\nnm\\
&\qquad \times\(\epsilon|\ln\epsilon|^2(1+s)^{-\frac{3}{4}}+\(1+\frac{s}{\epsilon}\)^{-1}\)(1+s)^{-\frac{3}{4}}ds\nnm\\
&\quad+C\delta_0^2\int_0^t(1+t-s)^{-\frac{3}{4}}(t-s)^{-\frac{1}{2}}\(\epsilon(1+s)^{-\frac{5}{4}}+e^{-\frac{\sigma_0 s}{2\epsilon^2}}\)(1+s)^{-\frac{3}{4}}ds.
\ema
For $t\leq\epsilon$,
\bma
&\quad\int^t_{0}(1+t-s)^{-\frac{3}{4}}(t-s)^{-\frac{1}{2}}\(1+\frac{s}{\epsilon}\)^{-1}(1+s)^{-\frac{3}{4}}ds\nnm\\
&\leq C\int_0^t(t-s)^{-\frac{1}{2}}ds\leq   C\le C\(1+\frac{t}{\epsilon}\)^{-1}.\label{dlth2-4-7-2}
\ema
For $t\geq\epsilon$,
\bma
&\quad\(\int_0^{\frac{t}{2}}+\int^t_{\frac{t}{2}}\)(1+t-s)^{-\frac{3}{4}}(t-s)^{-\frac{1}{2}}\(1+\frac{s}{\epsilon}\)^{-1}(1+s)^{-\frac{3}{4}}ds\nnm\\
&\leq C\epsilon |\ln\epsilon|(1+t)^{-\frac{3}{4}}t^{-\frac12}+C\(1+\frac{t}{\epsilon}\)^{-1}(1+t)^{-\frac{3}{4}}\sqrt{t}\nnm\\
&\leq C\bigg(\epsilon|\ln\epsilon|^2(1+t)^{-1}+\(1+\frac{t}{\epsilon}\)^{-1}\bigg).\label{dlth2-4-7-6}
\ema
Thus, by \eqref{dlth2-4-7}-\eqref{dlth2-4-7-6}, we have
\be\label{dlth2-4-8}
I_{32} \leq C\(\delta_0^2+\delta_0\Lambda_{\epsilon}(t)\)\bigg(\epsilon|\ln\epsilon|^2(1+t)^{-1}+\(1+\frac{t}{\epsilon}\)^{-1}\bigg).
\ee

For $I_4$, by Lemma \ref{3dmvpbfas6}, \eqref{lsp2j3-3} and \eqref{youngbds}, we have
\bma\label{dlth2-4-9}
I_4&\leq \int_0^t\bigg\|\frac{1}{\epsilon^2}e^{\frac{(t-s)B_{\epsilon}}{\epsilon^2}}v\sqrt{M}\cdot\nabla_x(I-\Delta_x)^{-1}G_3(s) \bigg\|_{L^{\infty}}ds\nnm\\
&\leq \frac{C}{\epsilon^2}\int_0^t\bigg(\epsilon(1+t-s)^{-\frac{5}{2}}+\(1+\frac{t-s}{\epsilon}\)^{-1}\bigg)  \|\nabla_x(I-\Delta_x)^{-1}G_3(s) \|_{H^3_x\cap W^{3,1}_x} ds\nnm\\
&\leq C\int_0^t\bigg(\epsilon(1+t-s)^{-\frac{5}{2}}+\(1+\frac{t-s}{\epsilon}\)^{-1}\bigg) \|\phi_{\epsilon}(s)\|^2_{H^3_x} ds\nnm\\
&\leq C\delta_0^2\epsilon|\ln\epsilon|(1+t)^{-1},
\ema
where we have used \eqref{dlth2-3-1-2-1}-\eqref{dlth2-3-1-2-2}.

By combining \eqref{dlth2-1}-\eqref{dlth2-3}, \eqref{dlth2-3-1-j}, \eqref{dlth2-3-1-8}-\eqref{dlth2-4-1} and \eqref{dlth2-4-8}-\eqref{dlth2-4-9}, we have
\be\label{dlth2-5}
\|f_{\epsilon}(t)-u(t)\|_{L^{\infty}}\leq C\(\delta_0+\delta_0^2+\delta_0\Lambda_{\epsilon}(t)\)\bigg(\epsilon|\ln\epsilon|^2(1+t)^{-\frac{3}{4}}+\(1+\frac{t}{\epsilon}\)^{-1}\bigg).
\ee

By Lemma \ref{rbedl5}, \eqref{youngbds} and \eqref{dlth2-5}, we have
\bma\label{dlth2-4-10}
\|\nabla_x\phi_{\epsilon}-\nabla_x\phi\|_{L^{\infty}_x}
&\leq C\|f_{\epsilon}(t)-u(t)\|_{L^{\infty}}+\frac{1}{\epsilon} \|\nabla_x(I-\Delta_x)^{-1}G_3(f_{\epsilon}) \|_{L^{\infty}_x}\nnm\\
&\leq C\|f_{\epsilon}(t)-u(t)\|_{L^{\infty}}+C\epsilon\|\phi_{\epsilon}\|_{L^{\infty}_x}^2\nnm\\
&\leq C\(\delta_0+\delta_0^2+\delta_0\Lambda_{\epsilon}(t)\)\bigg(\epsilon|\ln\epsilon|^2(1+t)^{-\frac{3}{4}}+\(1+\frac{t}{\epsilon}\)^{-1}\bigg).
\ema

By combining \eqref{dlth2-5} and \eqref{dlth2-4-10}, we obtain
\bq
\Lambda_{\epsilon}(t)\leq C\delta_0+C\delta_0^2+C\delta_0\Lambda_{\epsilon}(t),
\eq
where $C>0$ is a constant independent of $\epsilon$. By taking $\delta_0>0$ small enough, we can obtain \eqref{dlth2-0}, which
proves \eqref{thm-2-1}.

Finally, we prove \eqref{thm-2-2}. Define
\bq\label{dlth-2-2}
\Omega_{\epsilon}(t)=\sup_{0\leq s\leq t}\(\epsilon|\ln\epsilon|\)^{-1}(1+s)^{\frac{3}{4}}\(\|f_{\epsilon}(s)-u(s)\|_{L^{\infty}}+\|\nabla_x\phi_{\epsilon}(s)-\nabla_x\phi(s)\|_{L^{\infty}_x}\),
\eq
If $f_0$ satisfies \eqref{F01}, then we have by Lemma \ref{3dmvpbfas6} that
\be
I_1\leq C\delta_0\epsilon(1+t)^{-2}.\label{dlth-2-2-1}
\ee
For $I_{21}$, $I_{31}$ and $I_{4}$, we have by \eqref{dlth2-3-1}, \eqref{dlth2-4-1} and \eqref{dlth2-4-9} that
\be\label{dlth-jre-i2314}
I_{21}+I_{31}+I_4\le C\delta_0^2\eps|\ln \eps|(1+t)^{-1}.
\ee
For $I_{22}$ and $I_{32}$, we have by Lemma \ref{3dmvpbfas3} and \eqref{rbedl5-1-2} that
\bma
I_{22}&\leq C\delta_0\Omega_{\epsilon}(t)\epsilon|\ln\epsilon|\int^t_0(1+t-s)^{-\frac{3}{4}}\ln\(2+\frac{1}{t-s}\)(1+s)^{-\frac{3}{2}}ds\nnm\\
&\leq C\delta_0\Omega_{\epsilon}(t)\epsilon|\ln\epsilon|(1+t)^{-\frac{3}{4}},\label{dlth-2-2-3}\\
I_{32}&\leq C\delta_0\Omega_{\epsilon}(t)\epsilon|\ln\epsilon|\int_0^t(1+t-s)^{-\frac{3}{4}}(t-s)^{-\frac{1}{2}}(1+s)^{-\frac{3}{2}}ds\nnm\\
&\quad+C\delta_0^2\int_0^t(1+t-s)^{-\frac{3}{4}}(t-s)^{-\frac{1}{2}}\(\epsilon(1+s)^{-\frac{5}{4}}+e^{-\frac{\sigma_0 s}{2\epsilon^2}}\)(1+s)^{-\frac{3}{4}}ds\nnm\\
&\leq C\(\delta_0\Omega_{\epsilon}(t)+\delta_0^2\)\epsilon|\ln\epsilon|(1+t)^{-\frac{3}{4}},\label{dlth-2-2-5}
\ema
where we have used \eqref{dlth2-3-1-2-1}-\eqref{dlth2-3-1-2-2}.

By combining \eqref{dlth-2-2-1}-\eqref{dlth-2-2-5}, we have
\be\label{dlth2-5a}
\|f_{\epsilon}(t)-u(t)\|_{L^{\infty}}\leq C\(\delta_0+\delta_0^2+\delta_0\Omega_{\epsilon}(t)\) \epsilon|\ln\epsilon| (1+t)^{-\frac{3}{4}} .
\ee

By  \eqref{dlth2-5a} and \eqref{youngbds}, we have
\bmas
\|\nabla_x\phi_{\epsilon}-\nabla_x\phi\|_{L^{\infty}_x}&\leq C\|f_{\epsilon}(t)-u(t)\|_{L^{\infty}}+\frac{1}{\epsilon} \|\nabla_x(I-\Delta_x)^{-1}G_3(f_{\epsilon}) \|_{L^{\infty}_x}\\
&\leq C\(\delta_0+\delta_0^2+\delta_0\Omega_{\epsilon}(t)\)\epsilon|\ln\epsilon|(1+t)^{-\frac{3}{4}}+C\delta_0^2\epsilon(1+t)^{-\frac{3}{2}}.
\emas
Thus, we can obtain
$$
\Omega_{\epsilon}(t)\leq C\delta_0+C\delta_0^2+C\delta_0\Omega_{\epsilon}(t),
$$
where $C>0$ is a constant independent of $\epsilon$. By taking $\delta_0>0$ small enough, we can obtain \eqref{dlth-2-2}, which
proves \eqref{thm-2-2}. The proof is completed.

\medskip
\noindent {\bf Acknowledgements:} The research of this work was supported  by the special foundation for Guangxi Ba Gui Scholars,  and the National Natural Science Foundation of China  grants (No.  12171104).


\end{document}